\newtheorem{thm}{Theorem}[section]
\newtheorem{lem}[thm]{Lemma}
\newtheorem{prpn}[thm]{Proposition}
\newtheorem{defn}[thm]{Definition}
\numberwithin{equation}{section}
\newcommand{\norm}[1]{\left\Vert#1\right\Vert}
\newcommand{\abs}[1]{\left\vert#1\right\vert}
\newcommand{\dbra}[1]{[\kern-0.15em[ #1 ]\kern-0.15em]}
\newcommand{\dbraco}[1]{[\kern-0.15em[ #1 [\kern-0.15em[}
\newcommand{\dbraoc}[1]{]\kern-0.15em] #1 ]\kern-0.15em]}
\title{On the Impulse Control of Jump Diffusions}
\author{Erhan Bayraktar  \thanks{Department of Mathematics, University of Michigan, 530 Church Street, Ann Arbor, MI 48109, USA.}
\and Thomas Emmerling \thanks{Department of Mathematics, University of Michigan, 530 Church Street, Ann Arbor, MI 48109, USA.}
\and Jos\'{e}-Luis Menaldi \thanks{Department of Mathematics, Wayne State University, 656 West Kirby Avenue, Detroit, MI 48202, USA.}}
\begin{document}

\maketitle

\begin{abstract}
Regularity of the impulse control problem for a non-degenerate $n$-dimensional jump diffusion with infinite activity and finite variation jumps was recently examined in \cite{DGW-2009}.  Here we extend the analysis to include infinite activity and infinite variation jumps.  More specifically, we show that the value function $u$ of the impulse control problem satisfies  $u\in W^{2,p}_{\mathrm{loc}}(\mathbb{R}^{n})$.        
\end{abstract}

\pagestyle{myheadings}
\thispagestyle{plain}
\markboth{ERHAN BAYRAKTAR, THOMAS EMMERLING, JOS\'{E}-LUIS MENALDI}{ON THE IMPULSE CONTROL OF JUMP DIFFUSIONS}

\section{Introduction} 
In this paper we analyze the regularity of the value function in an impulse control problem for an $n$-dimensional jump diffusion process.  We assume that the uncontrolled stochastic process $X$ is governed by the stochastic differential equation:
\begin{equation}
\label{sde}
\mathrm{d}X_{t}=\tilde{b}(X_{t-})\mathrm{d}t+\sigma(X_{t-})\mathrm{d}W_{t}+\int_{\mathbb{R}^{l}}j(X_{t-},z)\tilde{N}(\mathrm{d}t,\mathrm{d}z), \ X_{0}=x.
\end{equation}
Here $W$ is a $d$-dimensional standard Brownian Motion and $N$ is a Poisson random measure on $\mathbb{R}_{+}\times \mathbb{R}^{l}$, with $W$ and $N$ independent.  The L\'{e}vy measure $\nu(\cdot):=\mathbb{E}[N(1,\cdot)]$ may be unbounded and $\tilde{N}(\mathrm{d}t,\mathrm{d}z)$ is its compensated Poisson random measure with $\tilde{N}(\mathrm{d}t,\mathrm{d}z)=N(\mathrm{d}t,\mathrm{d}z)-\nu(\mathrm{d}z)\mathrm{d}t$.  Below, we specify the assumptions placed upon $\tilde{b},\sigma, j$ in order to ensure that the SDE is well-defined.  If an admissible control policy $V=(\tau_{1},\xi_{1};\tau_{2},\xi_{2}; \ldots)$ is chosen, then $X$ evolves as
\begin{equation}
\mathrm{d}X_{t}=\tilde{b}(X_{t-})\mathrm{d}t+\sigma(X_{t-})\mathrm{d}W_{t}+\int_{\mathbb{R}^{l}}j(X_{t-},z)\tilde{N}(\mathrm{d}t,\mathrm{d}z)+\sum_{i}\delta(t-\tau_{i})\xi_{i},
\end{equation}
where $\delta$ denotes the Dirac delta function.  Given a control $V:=(\tau_{1},\xi_{1}; \tau_{2},\xi_{2};\ldots)$, the objective function is 
\begin{equation}
\label{objfcn}
J_{x}[V]:=\mathbb{E}_{x}\left(\int_{0}^{\infty}e^{-rt}f(X_{t})\mathrm{d}t+ \sum_{i=1}^{\infty}e^{-r\tau_{i}}B(\xi_{i})\right).
\end{equation}
The goal is to minimize the objective function over all admissible control policies:
\begin{equation}
\label{vf}
u(x)=\inf_{V}J_{x}[V].
\end{equation}
Intuitively, we expect from the Dynamic Programming Principle that the value function $u(x)$ satisfies the following quasi-variational inequality
\begin{equation}
\label{HJB}
\tag{QVI}
\max\{-\mathcal{L}u+ru-f, u-\mathcal{M}u\}=0, \ x \in \mathbb{R}^{n},
\end{equation}
where $\mathcal{M}\varphi(x)$ is the minimal operator such that 
\begin{equation}
\mathcal{M}\varphi(x):=\inf_{\xi\in \mathbb{R}^{n}}(\varphi(x+\xi)+B(\xi)),
\end{equation}
and the partial integro-differential operator $\mathcal{L}$ is defined as
\begin{equation}
\begin{split}
\mathcal{L}\varphi(x)&:=\mathcal{L}_{D}\varphi(x)+I\varphi(x),
\end{split}
\end{equation}
with
\begin{equation}
\label{Aoperator}
\begin{split}
\mathcal{L}_{D}\varphi(x)&=\sum_{i,k=1}^{n}a_{ik}(x)\partial^{2}_{x_{i}x_{k}}\varphi(x)+\sum_{i=1}^{n}\tilde{b}_{i}(x)\partial_{x_{i}}\varphi(x),\\
I\varphi(x)&=\int_{\mathbb{R}^{l}}\left(\varphi(x+j(x,z))-\varphi(x)-j(x,z)\nabla\varphi(x)\right)\nu(\mathrm{d}z),
\end{split}
\end{equation}
where $(a_{ij})_{n\times n}:= \frac{1}{2}\sigma(x)\sigma(x)^{T}$.  \\   
\indent Analysis of the impulse control problem finds its roots in the classical works of \cite{BL-1982-2} and \cite{BL-1982}.  With regard to impulse control, these authors characterized the value function, analyzed optimal policies and discussed regularity of the value function in the non-degenerate diffusion case with bounded data.  Subsequent contributions such as \cite{M-1980-2}, \cite{M-1980-1}, \cite{M-1987} focused upon obtaining various characterizations of the value function for impulse control in more general settings than \cite{BL-1982-2} and \cite{BL-1982} such as the degenerate/non-degenerate pure/jump diffusion with bounded/unbounded data environments.  The focus of this paper is on identifying the regularity of the value function for impulse control under a general jump diffusion setting on the whole space and with unbounded controls.  Regularity in various relevent contexts has been examined by many in the literature, see e.g.  \cite{BX-2009}, \cite{BL-1982-2}, \cite{BL-1982}, \cite{DGW-2009}, \cite{GT-2001}, \cite{GL-1984}, \cite{GW-2009}, \cite{L-1982}, \cite{MR-1999}.  Recently, \cite{GW-2009} (resp. \cite{DGW-2009}) identified $W^{2,p}_{\mathrm{loc}}(\mathbb{R}^{n})$ regularity of the value function of impulse control for a pure diffusion (resp. jump diffusion) with unbounded controls.  In both of these papers, the authors utilized classical PDE arguments along with recent viscosity results for impulse control \cite{S-2009} to establish regularity.  For the jump diffusion case \cite{DGW-2009}, the authors establish $W^{2,p}_{\mathrm{loc}}(\mathbb{R}^{n})$ regularity for the value function for a jump process with finite variation jumps, i.e., integro-differential operators of order $[0,1]$.  With the regularity question resolved in this case, we examine whether this result can be leveraged to improve regularity to include jump processes which exhibit \emph{infinite} variation jumps, i.e., integro-differential operators of order $(1,2]$.  

We find, in Section~\ref{regularity-continuation-region}, that the regularity presented in \cite{DGW-2009} is particularly helpful in establishing regularity  in the continuation region $\mathcal{C}:=\{x\in \mathbb{R}^{n}: u(x)<\mathcal{M}u(x)\}$ for general jumps through approximation.  More specifically, we approximate the value function for the general jumps case using a value function for impulse control of a jump diffusion with finite variation jumps, i.e., integrable jumps $j^{\epsilon}(x,z)\in L^{1}(\mathbb{R}^{l}, \nu)$.  This value function converges uniformly on $\mathbb{R}^{n}$ (see Lemma \ref{UniformConv}) to the value function for infinite variation jumps and is in $W^{2,p}_{\mathrm{loc}}(\mathcal{C})$ via a weak limit argument (see Lemma \ref{W2ploc}).  This argument utilizes a variation of the local estimates for the integro-differential operator found in \cite{BX-2009}, \cite{GM-2002}, and \cite{MR-1999} (see Proposition \ref{AlmostLocal}) which only is valid in the continuation region $\mathcal{C}$.  Similar to \cite{DGW-2009}, a bootstrap method allows us to improve regularity so that $u\in C^{2,\frac{2\alpha-\gamma}{2}}(\mathcal{C})$ (see Proposition \ref{uHolder-cont}).  

For finite variation jumps, the authors in \cite{DGW-2009} show how establishing regularity of $u$ in the continuation region $\mathcal{C}$ can be particularly helpful in improving the result to the whole space, i.e., proving $u\in W^{2,p}_{\mathrm{loc}}(\mathbb{R}^{n})$.  This is primarily due to the fact that minimizers of $\mathcal{M}u(x)$ translate $x$ into the continuation region.  With this in mind, upon obtaining regularity in the continuation region for general jumps, we next examine in Section~\ref{sec:bdddmap} whether the same techniques carried out in \cite{DGW-2009} can be applied to smoothly carry $W^{2,p}_{\mathrm{loc}}$-regularity over into the action region $\mathcal{A}:=\{x\in \mathbb{R}^{n}: u(x)=\mathcal{M}u(x)\}$.  More specifically, this involves an examination of a Dirichlet problem on a bounded open set with a non-local integro-differential operator.  Resources for the regularity of second order elliptic integro-differential problems include  \cite{BCI-2008},  \cite{GM-2002}, \cite{GL-1984}, \cite{L-1982}, \cite{MR-1999} among others.  However, Dirichlet problems on bounded sets in the infinite variation case generate a singularity at the boundary.  As the monograph \cite{GM-2002} shows in detail, unless one is willing to restrict the state space of the jump process or impose the condition that only finite variation jumps can take the process outside the boundary, regularity cannot be guaranteed.  In order to avoid both of these unappealing restrictions, we develop a new approach to obtain $W^{2,p}_{\mathrm{loc}}$-regularity in the whole space.   Rather than analyzing $u$ as a solution to a variational inequality (VI) in an arbitrary bounded open set $\mathcal{O}$ in $\mathbb{R}^{n}$ as demonstrated in \cite{DGW-2009}, we obtain in Section~\ref{sec:wspapp} a characterization of $u$ as a distributional solution to a quasi-variational inequality (QVI) in $\mathbb{R}^{n}$.  Upon doing so, we then proceed to show that the distribution $(-\mathcal{L_{D}}-I+r)u$ is in fact a locally bounded function on $\mathbb{R}^{n}$.  Using this knowledge, an application of local estimates (Proposition \ref{AlmostLocal}) allows us to conclude $W^{2,p}_{\mathrm{loc}}(\mathbb{R}^{n})$ at the end of Section \ref{wholespace}.  \\
\indent The paper is organized as follows.  Section \ref{assumptions} provides the assumptions for the problem.  Section \ref{Integro-properties} discusses some useful properties relating to the value function and integro-differential operator.  Section \ref{regularity-continuation-region} presents regularity of the value function in the continuation region.  Section \ref{wholespace} presents the main regularity result, Theorem~\ref{thm:main}.  An Appendix includes proofs of some technical results from Section \ref{Integro-properties}, \ref{regularity-continuation-region}.  

\section{Assumptions}
\label{assumptions}
We adopt the notation used in \cite{DGW-2009} for function spaces if not explicitly defined and present the following assumptions:  \\ 

\noindent \emph{Lipschitz coefficients/running cost:}  We assume that the drift, volatility and the jump amplitude (in the first variable) in (\ref{sde}) are Lipschitz continuous and have Lipschitz continuous first derivatives (denoted $\tilde{b}^{'},\sigma', j'$), i.e., there exists a positive constants $C_{\tilde{b}}, C_{\sigma}>0$ and a positive function $C_{j}(z)\in L^{q}(\mathbb{R}^{l},\nu)$ for $q=1,2,4$ such that for any $x,y \in \mathbb{R}^{n}, z\in \mathbb{R}^{l},$
\begin{equation}
\tag{H1}
\label{coeffAs}
\begin{split}
&|\tilde{b}(x)-\tilde{b}(y)|\leq C_{\tilde{b}}\abs{x-y},\ \abs{\sigma(x)-\sigma(y)}  \leq C_{\sigma}\abs{x-y},\\
&\abs{j(x,z)-j(y,z)}\leq C_{j}(z)\abs{x-y}, \ \text{and there exists} \ C>0 \ \text{such that}
\end{split}
\end{equation}
\begin{equation}
\tag{H2}
\label{coeffDerLip}
\begin{split}
\abs{\tilde{b}^{'}(x)-\tilde{b}^{'}(y)}^{2}+\abs{\sigma'(x)-\sigma'(y)}^{2}+\int_{\mathbb{R}^{l}}\abs{j'(x,z)-j'(y,z)}^{2}\nu(\mathrm{d}z)\leq C\abs{x-y}^{2},
\end{split}
\end{equation}
where $\tilde{b}: \mathbb{R}^{n}\rightarrow \mathbb{R}^{n}$, $\sigma:  \mathbb{R}^{n}\rightarrow \mathbb{R}^{n\times d}$, $j: \mathbb{R}^{n}\times \mathbb{R}^{l}\rightarrow \mathbb{R}^{n}$.  Assume the running cost $f\geq 0$ is Lipschitz continuous, i.e., there exists a constant $C_{f}>0$ such that 
\begin{equation}
\label{fLip}
\tag{H3}
|f(x)-f(y)|\leq C_{f}|x-y|, \ \forall x,y \in \mathbb{R}^{n}.
\end{equation}

\noindent \emph{Semiconcavity:} Suppose for every open ball $B_{r}(0)$ of radius $r>0$ centered at $0$ (or simply denoted $B_{r}$), there exists a constant $C_{r}>0$ such that the function
\begin{equation}
\label{fsemiconcave}
\tag{H4}
x\mapsto f(x)-C_{r}\abs{x}^{2} \ \text{is concave}.
\end{equation}  

\noindent \emph{Jump conditions:}  For the jump amplitude $j$ and the L\'{e}vy measure $\nu$, we assume there exists some positive measurable function $j_{0}(z)$ such that
\begin{equation}
\label{BdsBigjumps}
\tag{H5}
\begin{split}
&\abs{j(x,z)}\leq j_{0}(z), \  \int_{\{j_{0}(z)\geq 1\}}[j_{0}(z)]^{2}\nu(\mathrm{d}z)\leq C_{0}<\infty,\\
&\int_{\{j_{0}(z)<1\}}[j_{0}(z)]^{p} \nu(\mathrm{d}z) \leq C_{0}<\infty, \ \text{for any} \ p \geq \gamma, \gamma\in [1,2]. 
\end{split}
\end{equation}
Assume that $j(x,z)$ is continuously differentiable in $x$ for any fixed $z$ and for any $x,x'$ and $0\leq \theta \leq 1$, there exists a constant $c_{0}>0$ such that 
\begin{equation}
\tag{H6}
\label{GMassumption}
\begin{split}
c_{0}\abs{x-x'}\leq \abs{(x-x')+\theta(j(x,z)-j(x',z))}\leq c_{0}^{-1}\abs{x-x'}.
\end{split}
\end{equation}
In particular, the Jacobian of $x \to j(x,z)$ satisfies
\begin{equation}
\begin{split}
c_{1}^{-1}\leq \mathrm{det}[I_{d}+\nabla j(x,z)]\leq C_{1},
\end{split}
\end{equation}
for any $x,z$ and some constants $c_{1},C_{1}\geq 1$, where $I_{d}$ is the identity matrix in $\mathbb{R}^{n}$, $\nabla j(x,z)$ is the matrix of the first partial derivatives in $x$, and $\mathrm{det}[\cdot]$ denotes the matrix determinant.  There exists a constant $M_{\gamma}>0$ such that
\begin{equation}
\tag{H7}
\label{adjointAs}
\begin{split}
&\abs{\nabla j(x,z)}\leq M_{\gamma}[j_{0}(z)]^{\gamma-1},\\
&\abs{\nabla \cdot j(x,z)-\nabla \cdot j(x+j(x,z),z)}\leq M_{\gamma}[j_{0}(z)]^{\gamma},
\end{split}
\end{equation}
where $\nabla\cdot j(x,z)$ denotes the divergence of the function $x\mapsto j(x,z)$ for any fixed $z$.  \\

\noindent \emph{Uniform Ellipticity.}  The diffusion component of $X$ satisfies the uniform ellipticity condition, i.e., there exists $\lambda>0$ such that
\begin{equation}
\label{unifEllAs}
\tag{H8}
\sum_{i,j=1}^{n}\xi_{i}a_{ij}(x)\xi_{j}\geq \lambda \abs{\xi}^{2}; \ \lambda>0, \ x \in \mathbb{R}^{n}.
\end{equation}

\noindent \emph{Transaction Cost:} The transaction cost function $B:\mathbb{R}^{n}\rightarrow \mathbb{R}$ is lower semi-continuous and satisfies:
\begin{equation}
\label{transactioncost}
\tag{H9}
\begin{cases}
& \inf_{\xi\in \mathbb{R}^{n}}B(\xi)=K>0,\\
& B\in C(\mathbb{R}^{n} \backslash \{0\}),\\
& |B(\xi)|\rightarrow \infty, \ \text{as} \ |\xi|\rightarrow \infty,\\
& B(\xi_{1})+B(\xi_{2})\geq B(\xi_{1}+\xi_{2})+K, \ \forall \ \xi_{1}, \xi_{2} \in \mathbb{R}^{n},\\
\end{cases}
\end{equation}

\noindent \emph{Discount Rate:}  Assume the discount rate $r$ is sufficiently large.\\


\indent The nonlocal integro-differential operator can be written as 
\begin{equation}
\label{IDO}
\begin{split}
I\varphi(x)&:=\int_{\mathbb{R}^{l}}\left(\varphi(x+j(x,z))-\varphi(x)-j(x,z)\cdot \nabla \varphi(x)\mathbb{1}_{\{j_{0}(z)<1\}}\right)\nu(\mathrm{d}z),
\end{split}
\end{equation}
and the local differential operator has the form
\begin{equation}
\begin{split}
\mathcal{L}_{D}\varphi(x)&:= \sum_{i,k=1}^{n}a_{ik}(x)\partial^{2}_{x_{i}x_{k}}\varphi(x)+\sum_{i=1}^{n}b_{i}(x)\partial_{x_{i}} \varphi(x) ,\\
\end{split}
\end{equation}
where $b:=\tilde{b}-\int_{\mathbb{R}^{l}}j(x,z)\mathbb{1}_{\mathbb{R}^{l}\setminus \{j_{0}(z)<1\}}\nu(\mathrm{d}z)$.\\

\section{Some Technical Estimates}
\label{Integro-properties}
In this section, we discuss preliminary regularity results of $u$ and $\mathcal{M}u$, prove some useful properties of the non-local operator $I$ and give a local $L^{p}$ estimate.   

\begin{lem}
\label{u*lip}
The function $u(\cdot)$ is Lipschitz continuous with constant $C_{u}$.  Additionally, $\mathcal{M}u(\cdot)$ is Lipschitz continuous.  
\end{lem}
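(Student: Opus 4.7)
The plan is to exploit the Lipschitz regularity of the SDE coefficients (\ref{coeffAs}), the Lipschitz regularity of the running cost (\ref{fLip}), and the assumption that $r$ is sufficiently large, via a synchronous coupling of the controlled processes starting from two different initial points.

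First, I would fix any admissible policy $V=(\tau_1,\xi_1;\tau_2,\xi_2;\ldots)$ and let $X^x, X^y$ denote the controlled processes driven by the \emph{same} Brownian motion $W$ and Poisson random measure $N$, with initial conditions $x$ and $y$ respectively, and subjected to the impulses of $V$ at the same times. Since the jumps $\xi_i$ at $\tau_i$ are identical, they cancel in the difference $D_t := X^x_t - X^y_t$, which satisfies the linearized SDE with only the drift, diffusion, and compensated jump contributions. Applying It\^o's isometry together with the Lipschitz estimates of (\ref{coeffAs}) (using in particular that $C_j \in L^2(\mathbb{R}^l,\nu)$) yields
\begin{equation*}
\expec \abs{D_t}^2 \,\leq\, \abs{x-y}^2 + K \int_0^t \expec\abs{D_s}^2 \, \ud s
\end{equation*}
for some constant $K$ depending only on $C_{\tilde b}$, $C_\sigma$, and $\norm{C_j}_{L^2(\nu)}$. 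Gronwall's inequality then gives $\expec\abs{D_t}^2 \leq \abs{x-y}^2 e^{Kt}$.

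Second, using (\ref{fLip}) and the Cauchy--Schwarz inequality, $\expec \abs{f(X^x_t) - f(X^y_t)} \leq C_f (\expec\abs{D_t}^2)^{1/2} \leq C_f \abs{x-y} e^{Kt/2}$. Since the impulse sum $\sum e^{-r\tau_i} B(\xi_i)$ is identical under the common policy $V$, it cancels in $J_x[V] - J_y[V]$, so
\begin{equation*}
\abs{J_x[V] - J_y[V]} \,\leq\, C_f \abs{x-y} \int_0^\infty e^{-(r - K/2)t} \, \ud t,
\end{equation*}
which is finite precisely when $r > K/2$ --- this is the role of the ``discount rate sufficiently large'' hypothesis. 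Since the bound is uniform in $V$, taking infima on both sides yields $\abs{u(x) - u(y)} \leq C_u \abs{x-y}$ with, for instance, $C_u = 2 C_f / (2r - K)$.

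Finally, for $\mathcal{M} u$: for every $\xi \in \mathbb{R}^n$, the Lipschitz property of $u$ gives $u(x+\xi) + B(\xi) \leq u(y+\xi) + B(\xi) + C_u \abs{x-y}$. Taking $\inf_\xi$ on both sides produces $\mathcal{M} u(x) \leq \mathcal{M} u(y) + C_u \abs{x-y}$, and symmetry closes the argument. The main obstacle is the Gronwall estimate for the compensated Poisson integral term: one must verify that the It\^o isometry applied to $\int_0^t \int_{\mathbb{R}^l} (j(X^x_{s-},z) - j(X^y_{s-},z)) \tilde N(\ud s,\ud z)$ produces $\int_0^t \expec \abs{D_s}^2 \ud s$ with a \emph{finite} prefactor, which requires $C_j \in L^2(\nu)$ as built into (\ref{coeffAs}). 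A secondary subtlety is ensuring that the policy $V$ can be applied pathwise to both $X^x$ and $X^y$; this is handled by taking admissible policies adapted to the underlying $W,N$-filtration, independently of initial condition.
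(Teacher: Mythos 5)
Your proposal is correct and follows essentially the same route as the paper's own proof: synchronous coupling of the two controlled processes under a common policy, a second-moment estimate for the difference via It\^o and the Lipschitz hypotheses, Jensen/Cauchy--Schwarz to pass to a first-moment bound, and the ``$r$ large'' hypothesis to make the discounted integral converge, followed by the standard $\inf_\xi$ argument for $\mathcal{M}u$. The only cosmetic difference is that the paper applies It\^o directly to $\varphi(y,t)=|y|^2 e^{-\alpha t}$ and reads off the exponential bound, whereas you derive the same bound by Gronwall; both yield identical conclusions.
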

\begin{proof}
In the Appendix, we provide a proof that $u$ is Lipschitz continuous within our setup.  Lemma 3.3 of \cite{DGW-2009} provides a proof for our setup that $\mathcal{M}u$ is Lipschitz continuous.
\end{proof}

\begin{defn}
Let $B_{r}(x)$ denote the open ball of radius $r$ centered at $x$.  The outer $\eta$-neighborhood of $\Omega$ is defined as $\Omega^{\eta}:=\{x\in \mathbb{R}^{n}: x\in B_{\eta}(y) \ \text{for some} \ y\in \Omega\}$.  
\end{defn}

\begin{lemma}
\label{IntSobEst}
($\varepsilon$-$L^{p}$-estimates) Let $\mathcal{O}$ be an open subset of $\mathbb{R}^{n}$ and suppose (\ref{BdsBigjumps}), and (\ref{GMassumption}) hold.  Then, for any given $\varepsilon>0$, there exists $C(\varepsilon)>0$ depending on $\varepsilon$, such that for smooth $\varphi$, Lipschitz on $\mathbb{R}^{n}$ with constant $C_{\varphi}$, we have for $1\leq p \leq \infty$,
\begin{equation}
\begin{split}
\norm{I\varphi}_{L^{p}(\mathcal{O})}\leq \varepsilon \norm{\varphi}_{W^{2,p}(\mathcal{O}^{\varepsilon})}+C(\varepsilon)C_{\varphi}.
\end{split}
\end{equation}
\end{lemma}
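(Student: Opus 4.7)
The plan is to decompose the integrand at a threshold $\delta = \delta(\varepsilon) \in (0,\varepsilon]$ and write $I\varphi = I^{<\delta}\varphi + I^{\ge\delta}\varphi$, where the small-jump part is
\[
I^{<\delta}\varphi(x) = \int_{\{j_0(z)<\delta\}}\bigl[\varphi(x+j(x,z)) - \varphi(x) - j(x,z)\cdot \nabla\varphi(x)\bigr]\nu(\mathrm{d}z)
\]
(the compensator term is present since $\delta < 1$), and $I^{\ge\delta}\varphi$ collects the rest. For small jumps I will exploit a second-order Taylor expansion combined with a change of variables to produce a term governed by $D^2\varphi$; for large jumps only the Lipschitz estimate of $\varphi$ is needed. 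The two errors will then be balanced by choice of $\delta$.

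For $I^{<\delta}\varphi$ the integrand equals $\int_0^1 (1-\theta)\langle D^2\varphi(x+\theta j(x,z))j(x,z), j(x,z)\rangle\,\mathrm{d}\theta$, pointwise dominated by $j_0(z)^2\,|D^2\varphi(x+\theta j(x,z))|$. Applying Minkowski's integral inequality in $x$ and changing variables $y = x + \theta j(x,z)$ in the inner integral, I obtain
\[
\|I^{<\delta}\varphi\|_{L^p(\mathcal{O})} \le C\,\|D^2\varphi\|_{L^p(\mathcal{O}^\delta)}\int_{\{j_0<\delta\}} j_0(z)^2\,\nu(\mathrm{d}z),
\]
with $C$ depending only on the constant $c_0$ in \eqref{GMassumption}. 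Indeed, that hypothesis makes the map $\Phi_\theta^z(x) := x + \theta j(x,z)$ bilipschitz with constants uniform in $(\theta,z) \in [0,1]\times \mathbb{R}^l$, so its Jacobian determinant is bounded above and below independently of $\theta, z$, and $\Phi_\theta^z(\mathcal{O}) \subseteq \mathcal{O}^\delta$ since $|\Phi_\theta^z(x)-x|\le j_0(z) < \delta$. Because $\gamma \le 2$ we have $j_0^2 \le j_0^\gamma$ on $\{j_0 < 1\}$, and by \eqref{BdsBigjumps} with dominated convergence $\int_{\{j_0<\delta\}} j_0^\gamma\,\mathrm{d}\nu \to 0$ as $\delta \to 0$; I then pick $\delta \le \varepsilon$ small enough that the coefficient in the display above is at most $\varepsilon$.

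For $I^{\ge\delta}\varphi$ the Lipschitz estimate of $\varphi$ gives pointwise
\[
|I^{\ge\delta}\varphi(x)| \le 2C_\varphi \int_{\{j_0 \ge \delta\}} j_0(z)\,\nu(\mathrm{d}z),
\]
and the right-hand side is finite: on $\{j_0\ge 1\}$ it is bounded by $\int_{\{j_0\ge 1\}} j_0^2\,\mathrm{d}\nu \le C_0$ from \eqref{BdsBigjumps}, while on $\{\delta \le j_0<1\}$ the inequality $j_0 \le \delta^{-(\gamma-1)}j_0^\gamma$ yields $\delta^{-(\gamma-1)}C_0$. Hence $\|I^{\ge\delta}\varphi\|_{L^\infty(\mathcal{O})} \le C(\varepsilon)C_\varphi$, and for $p<\infty$ a geometric factor for $\mathcal{O}$ is absorbed into $C(\varepsilon)$. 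Summing the two estimates, together with $\mathcal{O}^\delta \subseteq \mathcal{O}^\varepsilon$ and $\|D^2\varphi\|_{L^p(\mathcal{O}^\delta)} \le \|\varphi\|_{W^{2,p}(\mathcal{O}^\varepsilon)}$, yields the claim.

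The main obstacle is the small-jump term: in the infinite-variation regime $\int j_0\,\mathrm{d}\nu$ may diverge near the origin, so one must recover $\nu$-integrability from the second-order Taylor remainder, which in turn is possible only because the compensator $j\nabla\varphi\mathbb{1}_{\{j_0<1\}}$ appears in the definition of $I$. The uniform bilipschitz property \eqref{GMassumption} is then essential: it is what transfers the $L^p$-norm of $D^2\varphi\circ\Phi_\theta^z$ back to $\|D^2\varphi\|_{L^p(\mathcal{O}^\delta)}$ with a constant independent of $(\theta,z)$, so that the constant $C$ does not blow up when integrating over the L\'evy measure.
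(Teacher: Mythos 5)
Your proof is correct and follows essentially the same strategy as the paper's: split at a small threshold, treat large jumps via the Lipschitz bound on $\varphi$, treat small jumps via a second-order Taylor remainder combined with the bilipschitz change of variables from \eqref{GMassumption}, and send the threshold to zero using $\int_{\{j_0<\delta\}} j_0^\gamma\,\mathrm{d}\nu \to 0$. The only cosmetic differences are that the paper splits the large-jump range further into $\{\eta\le j_0<1\}$ and $\{j_0\ge 1\}$ (your $I^{\ge\delta}$ handles both at once) and derives the $L^p$ bound on the small-jump piece by Jensen and H\"older rather than Minkowski's integral inequality, which amounts to the same estimate.
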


\begin{proof}
See the Appendix.
\end{proof}

A direct application of Lemma \ref{IntSobEst} is the following local estimate for the integro-differential operator (see e.g. Proposition 2.4 in \cite{MR-1999}, Theorem 3.1.20 in \cite{GM-2002}, Proposition 3.5 in \cite{BX-2009}).  The estimate represents a direct extension of the classical $L^{p}$ interior estimates of Theorem 9.11 in \cite{GT-2001}.  
\begin{prpn}
\label{AlmostLocal}
(Local $L^{p}$-estimates) Suppose (\ref{coeffAs}), (\ref{BdsBigjumps}), (\ref{GMassumption}), and (\ref{unifEllAs}).  Let $\mathcal{O}'\subset \mathcal{O}$ be bounded open subsets of $\mathbb{R}^{n}$ with $\mathrm{dist}(\partial \mathcal{O}',\partial \mathcal{O})\geq \delta>0$.  Suppose that $v\in W^{2,p}_{\mathrm{loc}}(\mathcal{O})$, $v$ is Lipschitz on $\mathbb{R}^{n}$ with constant $C_{v}$, $1<p<\infty$.  Letting
\begin{equation}
(-\mathcal{L}_{D}-I+r)v=f \ \text{in} \ \mathcal{O},
\end{equation}
define the function $f$ in $\mathcal{O}$, there exists a constant $C$ depending on $n,p,\delta, \mathrm{diam}(\mathcal{O})$ and the bounds imposed by (\ref{coeffAs}) and (\ref{unifEllAs}) such that
\begin{equation}
\begin{split}
\norm{v}_{W^{2,p}(\mathcal{O}')}\leq C(\norm{f}_{L^{p}(\mathcal{O})}+C_{v}+\norm{v}_{L^{\infty}(\mathcal{O})}).
\end{split}
\end{equation}
\end{prpn}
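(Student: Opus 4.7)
The plan is to treat the non-local term $Iv$ as a forcing function and reduce to the classical $L^{p}$ interior estimate for uniformly elliptic second-order operators. Rewriting the equation as
\begin{equation*}
(-\mathcal{L}_{D}+r)v \, = \, f+Iv \quad \text{in } \mathcal{O},
\end{equation*}
one obtains a purely local uniformly elliptic equation (by (\ref{coeffAs}) and (\ref{unifEllAs})) with Lipschitz coefficients. Theorem 9.11 of \cite{GT-2001}, applied to a pair of nested open subsets $\Omega' \subset\subset \Omega \subset \mathcal{O}$ whose boundaries are separated by a distance $d>0$, yields
\begin{equation*}
\|v\|_{W^{2,p}(\Omega')} \le C\bigl(\|f\|_{L^{p}(\Omega)}+\|Iv\|_{L^{p}(\Omega)}+\|v\|_{L^{p}(\Omega)}\bigr),
\end{equation*}
where $C$ depends on $n,p,d,\mathrm{diam}(\Omega)$ and the constants in (\ref{coeffAs}), (\ref{unifEllAs}). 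Since $\mathcal{O}$ is bounded, $\|v\|_{L^{p}(\Omega)}\le |\mathcal{O}|^{1/p}\|v\|_{L^{\infty}(\mathcal{O})}$, which produces the $\|v\|_{L^{\infty}(\mathcal{O})}$ contribution on the right-hand side.

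What remains is to control $\|Iv\|_{L^{p}(\Omega)}$ without reintroducing high-order derivatives of $v$. Lemma~\ref{IntSobEst} supplies exactly the needed trade-off: for every $\varepsilon>0$,
\begin{equation*}
\|Iv\|_{L^{p}(\Omega)} \le \varepsilon \|v\|_{W^{2,p}(\Omega^{\varepsilon})}+C(\varepsilon)C_{v}.
\end{equation*}
Because $\Omega^{\varepsilon}$ is strictly larger than $\Omega$, a single application does not close the estimate; one needs a finite chain
\begin{equation*}
\mathcal{O}' = \Omega_{0} \subsetneq \Omega_{1} \subsetneq \cdots \subsetneq \Omega_{N} \subsetneq \mathcal{O}
\end{equation*}
engineered so that the interior estimate and Lemma~\ref{IntSobEst} can be combined on each consecutive pair. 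Setting $\Phi_{k} := \|v\|_{W^{2,p}(\Omega_{k})}$, this produces a recursion
\begin{equation*}
\Phi_{k} \le A + \theta_{k}\Phi_{k+1}, \qquad A := C\bigl(\|f\|_{L^{p}(\mathcal{O})}+C_{v}+\|v\|_{L^{\infty}(\mathcal{O})}\bigr),
\end{equation*}
in which $\theta_{k}$ is proportional to $\varepsilon_{k}$ multiplied by the Calder\'on--Zygmund constant at step $k$.

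The main obstacle is the tension between the two roles of the gap $\mathrm{dist}(\partial\Omega_{k},\partial\Omega_{k+1})$: shrinking it makes the $\varepsilon$-absorption cheap but inflates the Gilbarg--Trudinger constant, whereas enlarging it does the opposite. I would resolve this by choosing geometrically decreasing gaps (e.g.\ $\mathrm{dist}(\partial\Omega_{k},\partial\Omega_{k+1})\sim \delta\cdot 2^{-k-1}$) with matching $\varepsilon_{k}$, so that although the per-step coefficients grow, the initial $\varepsilon_{0}$ can be chosen small enough that the telescoping product $\prod_{j<k}\theta_{j}$ remains summable. Since $v\in W^{2,p}_{\mathrm{loc}}(\mathcal{O})$, $\Phi_{N}$ is finite for every $N$, and iterating the recursion from $k=0$ upward followed by a straightforward limit argument yields
\begin{equation*}
\|v\|_{W^{2,p}(\mathcal{O}')} \le C\bigl(\|f\|_{L^{p}(\mathcal{O})}+C_{v}+\|v\|_{L^{\infty}(\mathcal{O})}\bigr),
\end{equation*}
with $C$ depending only on the stated quantities. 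This mirrors the strategies of \cite[Prop.~2.4]{MR-1999}, \cite[Thm.~3.1.20]{GM-2002}, and \cite[Prop.~3.5]{BX-2009}.
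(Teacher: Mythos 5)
Your overall strategy is the same as the paper's: move $Iv$ to the right-hand side, apply the classical $W^{2,p}$ estimate treating $f + Iv$ as a source term, control $\|Iv\|_{L^p}$ with the $\varepsilon$-estimate of Lemma~\ref{IntSobEst}, and close by iteration. The paper also does exactly this, but via a smooth cutoff $\zeta^{\delta}$ localized in a single ball (so that the cutoff function $w=\zeta^{\delta}v$ solves a Dirichlet problem with zero boundary data, and the \emph{global} $W^{2,p}$ estimate applies with a constant independent of the cutoff scale), and then iterating in the scale parameter rather than over a chain of domains.

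The genuine gap is in how you propose to close the iteration. You correctly identify the tension: shrinking the gap $d_{k}=\mathrm{dist}(\partial\Omega_{k},\partial\Omega_{k+1})$ cheapens the $\varepsilon$-absorption but inflates the Calder\'on--Zygmund constant, and with geometrically decreasing gaps the per-step multiplier $\theta_{k}$ does indeed grow. But your proposed fix --- ``the initial $\varepsilon_{0}$ can be chosen small enough that the telescoping product $\prod_{j<k}\theta_{j}$ remains summable'' --- does not work: $\varepsilon_{0}$ influences only $\theta_{0}$, and a single small factor at the start cannot dominate a product whose later factors diverge. As written, $\Phi_{N}\prod_{j<N}\theta_{j}$ need not go to zero and $\sum_{k}A\prod_{j<k}\theta_{j}$ need not converge. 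The paper resolves exactly this obstacle by introducing a weight: it sets $F(\tau):=\tau^{2}\norm{v}_{W^{2,p}(B_{\frac{\delta}{2}R+(\delta-\tau)})}$, which turns the raw recursion into $F(\delta)\leq C\delta\,F(\delta/2)+K(\delta)$. The factor $\tau^{2}$ simultaneously (i) kills the blow-up of the cutoff-derivative terms (the $\delta^{(n-2p)/p}$ etc.\ become bounded after multiplication by $\delta^{2}$) and (ii) converts the multiplicative coefficient into $C\delta$, which is $\leq 1/2$ once $\delta$ is fixed small, so that iterating the inequality at scales $\delta,\delta/2,\delta/4,\dots$ gives $F(\delta)\leq 2K(\delta)$ directly. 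Without a weight of this kind (or an explicit separation of the uniform coefficient on $\|Lw\|_{L^{p}}$ from the $d_{k}^{-2}$-blowup on $\|v\|_{L^{p}}$, carefully quantified), your chain-of-domains recursion does not close. If you want to salvage your framing you must either import the $\tau^{2}$ weighting, or choose $\varepsilon_{k}$ decaying fast enough that $\prod_{j<k}\theta_{j}$ is super-exponentially small while simultaneously tracking that the $C(\varepsilon_{k})$ terms in Lemma~\ref{IntSobEst} and the $d_{k}^{-2}$ terms from Gilbarg--Trudinger, which then blow up, are still dominated by those products --- which is in effect what the weighting does, but you would need to carry out the bookkeeping rather than assert it.
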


\begin{proof}
This proof is similar to the proof of Proposition 3.5 in \cite{BX-2009}. For the sake of completeness we provided a proof in the Appendix.
\end{proof}

\begin{lem}
\label{HolderReg}
Assume (\ref{BdsBigjumps}) holds.  Suppose $\varphi$ is Lipschitz on $\mathbb{R}^{n}$ with constant $C_{\varphi}$.  Let $\Omega$ be a bounded open set of $\mathbb{R}^{n}$.  If $\varphi\in C^{1,\alpha}(\overline{\Omega^{1}})$ for some $\alpha\in[\gamma/2,1]$, then $I\varphi\in C^{0,\frac{2\alpha-\gamma}{2}}(\overline{\Omega})$ and
\begin{equation}
\begin{split}
\norm{I\varphi}_{C^{0,\frac{2\alpha-\gamma}{2}}(\overline{\Omega})}\leq C\left(C_{\varphi}+\norm{\varphi}_{C^{1,\alpha}(\overline{\Omega^{1}})}\right),
\end{split}
\end{equation}
for a positive constant $C$ dependent upon $\Omega, \alpha,\gamma$.
\end{lem}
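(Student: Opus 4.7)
The plan is to estimate $|I\varphi(x) - I\varphi(y)|$ for $x,y \in \overline{\Omega}$ by splitting the integral defining $I\varphi$ according to the magnitude of $j_0(z)$, and to read off the $L^\infty$ part of the norm from the same pointwise tools. First I would partition $\mathbb{R}^l$ into the large-jump region $\{j_0 \ge 1\}$ and the small-jump region $\{j_0 < 1\}$ as in (\ref{IDO}). On $\{j_0 \ge 1\}$ the L\'evy measure is finite by (\ref{BdsBigjumps}) (since $\nu(\{j_0 \ge 1\}) \le \int [j_0]^2\,\nu(\mathrm{d}z) \le C_0$), there is no compensator, and $\varphi$ is globally Lipschitz; together with the bound on $|j(x,z) - j(y,z)|$ from (\ref{coeffAs}) this produces an estimate of order $C\,C_\varphi\,|x-y|$ for the large-jump piece of $I\varphi(x) - I\varphi(y)$, which is automatically $O(|x-y|^{(2\alpha-\gamma)/2})$ since $(2\alpha-\gamma)/2 \le 1$.

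The bulk of the work is on $\{j_0 < 1\}$, where I would exploit the $C^{1,\alpha}$ regularity. Since $|j(x,z)| \le j_0(z) < 1$, the whole segment from $x$ to $x + j(x,z)$ lies in $\overline{\Omega^{1}}$ whenever $x \in \overline{\Omega}$, so I may write
\[ g(x,z) := \varphi(x+j(x,z)) - \varphi(x) - j(x,z)\cdot\nabla\varphi(x) = \int_0^1 \bigl[\nabla\varphi(x + t j(x,z)) - \nabla\varphi(x)\bigr]\cdot j(x,z)\,\mathrm{d}t. \]
From this representation I would deduce two complementary bounds: H\"older continuity of $\nabla\varphi$ yields the pointwise estimate $|g(x,z)| \le [\nabla\varphi]_{C^{0,\alpha}(\overline{\Omega^{1}})}\,[j_0(z)]^{1+\alpha}$, while applying the same representation at $y$ and then using H\"older continuity of $\nabla\varphi$ together with (\ref{coeffAs}) and (\ref{adjointAs}) yields the difference estimate
\[ |g(x,z) - g(y,z)| \le C\, \|\varphi\|_{C^{1,\alpha}(\overline{\Omega^{1}})}\bigl([j_0(z)]\,(1 + C_j(z)^\alpha)\,|x-y|^{\alpha} + C_j(z)\,[j_0(z)]^{\alpha}\,|x-y|\bigr). \]

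The hard part is balancing these two bounds optimally. I would introduce the threshold $\delta := |x-y|^{1/2}$ and split $\{j_0 < 1\} = \{j_0 < \delta\}\cup\{\delta \le j_0 < 1\}$, using the pointwise bound on the former and the difference bound on the latter. For the first piece, the inequality $[j_0]^{1+\alpha} \le \delta^{1+\alpha-\gamma}\,[j_0]^{\gamma}$ (legitimate because $\alpha \ge \gamma/2$ forces $1+\alpha-\gamma \ge 0$) combined with (\ref{BdsBigjumps}) gives a contribution of order $\delta^{1+\alpha-\gamma} = |x-y|^{(1+\alpha-\gamma)/2}$. For the second piece I would replace every $C_j(z)$ by $M_\gamma[j_0(z)]^{\gamma-1}$ via (\ref{adjointAs}), absorb all remaining negative $j_0$-exponents via $[j_0(z)]^s \le \delta^s$ on $\{j_0 \ge \delta\}$, and reduce each integral to a finite $\gamma$-moment controlled by (\ref{BdsBigjumps}). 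Elementary bookkeeping then shows every piece of $I\varphi(x) - I\varphi(y)$ is $O(|x-y|^{(2\alpha-\gamma)/2})$, and the $L^\infty$ bound on $\overline{\Omega}$ drops out of the same pointwise estimates applied at a single point. The subtlety is that the choice $\delta = |x-y|^{1/2}$ is precisely what balances the Taylor gain $[j_0]^{1+\alpha}$ for very small jumps against the first-order Lipschitz behaviour of $g$ in $x$ at intermediate jumps, so that the resulting H\"older exponent is exactly $(2\alpha-\gamma)/2$.
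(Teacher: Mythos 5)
Your argument is, in essence, the paper's proof: the same split into $\{j_0<\delta\}$, $\{\delta\le j_0<1\}$, and $\{j_0\ge1\}$ with the threshold $\delta=|x-y|^{1/2}\wedge 1$, the Taylor remainder giving the $[j_0]^{1+\alpha}$ gain for tiny jumps, the Lipschitz/H\"older bound on the intermediate annulus, and pure Lipschitz control on the large jumps, balanced to yield exponent $(2\alpha-\gamma)/2$. The only cosmetic difference is that you package the intermediate-jump contribution as a single ``difference of $g$'' estimate and control $C_j(z)$ by $M_\gamma[j_0(z)]^{\gamma-1}$ via (\ref{adjointAs}), whereas the paper works directly with $C_j\in L^1(\nu)$ from (\ref{coeffAs}); both routes close the bookkeeping.
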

\begin{proof}  This proof is similar to the proof of Lemma 3.2 in \cite{BX-2009}.  For details see the Appendix.
\end{proof}

\section{Regularity in the Continuation Region}
\label{regularity-continuation-region}
In this section, we establish the regularity of the value function $u$ in the continuation region $\mathcal{C}:=\{x\in \mathbb{R}^{n}: u(x)<\mathcal{M}u(x)\}$ through approximation.  As we show below, each approximate value function will satisfy the integrability assumption required in the regularity analysis undertaken in \cite{DGW-2009} and thus has $W^{2,p}_{\mathrm{loc}}$-regularity in $\mathbb{R}^{n}$.  Upon knowing this regularity for each approximation, we then show that a weak limit of the approximations exists by demonstrating that the sequence of solutions is bounded in $W^{2,p}_{\mathrm{loc}}$.  This argument utilizes the local $L^{p}$-estimates of Proposition \ref{AlmostLocal} and only holds in the continuation region $\mathcal{C}$.  To complete the argument, we then demonstrate that our sequence of approximations converges uniformly in $\mathbb{R}^{n}$ to $u$.  Finally, we implement a ``bootstrap" argument carried out in \cite{DGW-2009} to upgrade the regularity of $u$ in $\mathcal{C}$ to a H\"{o}lder space with two continuous derivatives.  We begin now with the approximation.

For $\epsilon>0$, set 
\begin{equation}
\label{j-epsilon}
\begin{split}
j^{\epsilon}(x,z):= j(x,z)\mathbb{1}_{\{j_{0}(z)>\epsilon\}}.
\end{split}
\end{equation}
With this definition, for each fixed $\epsilon>0$, it holds that $j^{\epsilon}\in L^{1}(\mathbb{R}^{l},\nu)$.  Indeed,
\begin{equation}
\begin{split}
\int_{\mathbb{R}^{l}}\abs{j^{\epsilon}(x,z)}\nu(\mathrm{d}z)\leq \int_{\{j_{0}>1\}}j_{0}(z)\nu(\mathrm{d}z)+\frac{1}{\epsilon^{2}}\int_{\{j_{0}\leq 1\}}[j_{0}(z)]^{2}\nu(\mathrm{d}z)<\infty.
\end{split}
\end{equation}
Letting $u_{\epsilon}$ denote the value function corresponding to a jump function $j^{\epsilon}$, we have that $u_{\epsilon}$ is Lipschitz continuous for each $\epsilon>0$.
\begin{lem}
\label{u*epLip}
For each $\epsilon>0$, the value function $u_{\epsilon}$ is Lipschitz continuous in $\mathbb{R}^{n}$ with constant $C_{u}$, the Lipschitz constant for $u$.  
\end{lem}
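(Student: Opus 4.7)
The plan is to reuse verbatim the argument that establishes the Lipschitz continuity of $u$ in the Appendix (referenced in Lemma \ref{u*lip}), observing that replacing $j$ by $j^{\epsilon}$ only shrinks the jump amplitude pointwise, so every constant appearing in that bound can be taken independent of $\epsilon$. Since $\tilde b$ and $\sigma$ are unchanged, only the jump term requires scrutiny.

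First, fix $x, y \in \mathbb{R}^n$ and $\delta > 0$, and pick an admissible strategy $V = (\tau_1, \xi_1; \tau_2, \xi_2; \ldots)$ that is $\delta$-optimal for $u_\epsilon(y)$. Apply the same $V$ to the process started at $x$, driven by the same $W$ and $N$ with jump amplitude $j^{\epsilon}$. Since the intervention times and sizes coincide, the $\sum_i e^{-r\tau_i} B(\xi_i)$ contributions cancel in $J^{\epsilon}_x[V] - J^{\epsilon}_y[V]$, and by the Lipschitz property (\ref{fLip}) of $f$,
\begin{equation*}
u_\epsilon(x) - u_\epsilon(y) \leq J^{\epsilon}_x[V] - J^{\epsilon}_y[V] + \delta \leq C_f \int_0^\infty e^{-rt}\, \mathbb{E}\bigl[|X_t^{x,\epsilon} - X_t^{y,\epsilon}|\bigr]\, dt + \delta.
\end{equation*}
Symmetry in $x$ and $y$ then gives the reverse bound, and $\delta \downarrow 0$ reduces the problem to a uniform (in $\epsilon$) $L^1$-moment estimate on $X_t^{x,\epsilon} - X_t^{y,\epsilon}$.

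Second, apply a standard Gronwall estimate to the SDE with coefficients $(\tilde b, \sigma, j^{\epsilon})$. The crucial pointwise observation is
\begin{equation*}
|j^{\epsilon}(x,z) - j^{\epsilon}(y,z)| = |j(x,z) - j(y,z)|\, \mathbb{1}_{\{j_0(z) > \epsilon\}} \leq C_j(z)\, |x-y|,
\end{equation*}
so the It\^o-isometry term $\int_{\mathbb{R}^l} |j^{\epsilon}(x,z) - j^{\epsilon}(y,z)|^2 \nu(dz)$ is dominated by $|x-y|^2 \int_{\mathbb{R}^l} C_j(z)^2 \nu(dz)$, which is finite by (\ref{coeffAs}) and independent of $\epsilon$. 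Consequently, the exponential constant produced by Gronwall is no larger than the one used for $X^x - X^y$ itself, and combining with the previous display and the ``$r$ sufficiently large'' hypothesis to make the $t$-integral converge, one obtains $|u_\epsilon(x) - u_\epsilon(y)| \leq C_u |x - y|$ with exactly the same $C_u$ as in Lemma \ref{u*lip}.

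The only potential obstacle is to ensure that each ingredient of the Appendix proof of Lemma \ref{u*lip} is indeed monotone in the jump-size data (so that the truncation can only help). This monotonicity reduces to the trivial majorisations $|j^{\epsilon}| \leq |j|$ pointwise and $|\nabla_x j^{\epsilon}| \leq |\nabla_x j|$ pointwise, which immediately render every hypothesis among (\ref{coeffAs})--(\ref{adjointAs}) used in the Appendix no worse for $j^{\epsilon}$ than for $j$; no further argument is required.
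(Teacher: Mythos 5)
Your proof is correct and is essentially the paper's own argument: the paper's proof consists precisely of the observation that $|j^{\epsilon}(x,z)-j^{\epsilon}(y,z)| \leq |j(x,z)-j(y,z)|$, so the It\^o/Gronwall estimate in the Appendix proof of Lemma~\ref{u*lip} goes through with a constant $\beta_j^{\epsilon}\leq\beta_j$, hence the same $C_u$. Your elaboration of why the truncation can only help (pointwise domination of the jump difference, and the resulting monotonicity of the Gronwall exponent) is the intended content of the paper's one-line proof.
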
 
\begin{proof}
The proof proceeds directly as in Lemma \ref{u*lip} since $\abs{j^{\epsilon}(x,z)-j^{\epsilon}(y,z)}\leq \abs{j(x,z)-j(y,z)}$.
\end{proof}

At this point, the regularity analysis presented in \cite{DGW-2009} allows us to conclude $u_{\epsilon}\in W^{2,p}_{\mathrm{loc}}(\mathbb{R}^{n})$ for each fixed $\epsilon >0$.  The next goal is to show uniform convergence of $u_{\epsilon}$ to $u$.  In doing so, we utilize a general estimate obtained for solutions of jump diffusions (see e.g. Chapter 5 in \cite{M-2008}).  For this estimate, we define the norm
\begin{equation}
\begin{split}
\norm{h-h'}_{0,p}&:=\underset{t,x}\sup  \Bigg\{\left(\int_{\mathbb{R}^{l}}\abs{h(t,x,z)-h'(t,x,z)}^{p}\nu(\mathrm{d}z)\right)^{1/p}\Bigg \},
\end{split}
\end{equation}
for $p\geq 2$.  Additionally, set
\begin{equation}
\begin{split}
\Lambda_{0,p}(h-h'):=\norm{h-h'}_{0,2p}+\norm{h-h'}_{0,2}.
\end{split}
\end{equation}

\begin{lem}
\label{moment-est}
Assume (\ref{coeffAs}), and suppose $r$ is sufficiently large.  Fix $\epsilon>0$.  Letting $X_{t}$ be a solution to (\ref{sde}) using jump function $j$ with  $X_{0}=x_{0}$ and $X^{\epsilon}_{t}$ be a solution using jump function $j^{\epsilon}$ and $X^{\epsilon}_{0}=x_{0}$, we have for $\alpha>\beta$,
\begin{equation}
\label{est-02}
\begin{split}
\mathbb{E}\Bigg [\underset{0\leq s \leq t}\sup\abs{X_{s}-X^{\epsilon}_{s}}^{2} e^{-\alpha s}\Bigg ] \leq M \Lambda_{0,2}^{2}(j-j^{\epsilon}),
\end{split}
\end{equation}  
for every $t\geq 0$ and for some constants $C, M$ which depend only upon $\alpha>\beta$, the bounds on $\tilde{b},\sigma, j$ and the dimensions $n, d$.  
\end{lem}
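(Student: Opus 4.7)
The plan is to apply It\^o's formula to $e^{-\alpha t}|Y_t|^2$, with $Y_t := X_t - X_t^\epsilon$, and then conclude via Gronwall's inequality. Since both processes share the initial datum $x_0$, $Y_0 = 0$. Subtracting the two SDEs produces for $Y$ the usual drift and Brownian-martingale contributions together with a compensated Poisson integral whose integrand I would split as
\[
j(X_{s-},z) - j^\epsilon(X_{s-}^\epsilon,z) \;=\; \bigl[j(X_{s-},z) - j(X_{s-}^\epsilon,z)\bigr] + j(X_{s-}^\epsilon,z)\,\mathbb{1}_{\{j_0(z) \le \epsilon\}}.
\]
The first bracket is Lipschitz in $Y_{s-}$ with coefficient $C_j(z) \in L^2(\nu)$ by (\ref{coeffAs}), while the $\nu$-norm of the residual is, uniformly in $x$, at most $\|j - j^\epsilon\|_{0,p}$ for the relevant $p$.

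Next I would apply It\^o's formula to $e^{-\alpha t}|Y_t|^2$. Bounding the drift term by Cauchy--Schwarz, the Brownian quadratic variation by the Lipschitz estimate $|\sigma(X)-\sigma(X^\epsilon)|^2 \le C_\sigma^2|Y|^2$, and the jump-compensator via $(a+b)^2 \le 2a^2 + 2b^2$, one obtains a deterministic integrand of the shape $e^{-\alpha s}\bigl[(-\alpha + \beta)|Y_{s-}|^2 + 2\|j - j^\epsilon\|_{0,2}^2\bigr]$ together with a Brownian stochastic integral and a compensated Poisson stochastic integral, where $\beta$ depends only on $C_{\tilde b}, C_\sigma,$ and $\|C_j\|_{L^2(\nu)}$. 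Choosing $\alpha > \beta$ (this is the sense in which the lemma's ``$r$ sufficiently large'' hypothesis enters) makes the $|Y_{s-}|^2$-coefficient non-positive, allowing it to be absorbed.

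To capture the sup-moment, I would take $\sup_{0 \le s \le t}$ inside the expectation and invoke the Burkholder--Davis--Gundy (Kunita) inequality for the two martingale integrals. For the purely discontinuous part, Kunita's inequality yields both an $L^2$-type term, bounded via the compensator by $\mathbb{E}\int_0^t\!\int|h|^2\nu(dz)\,ds$, and a higher-moment contribution from the sharp bracket whose compensation produces $\|h\|_{0,4}^2$; this is precisely why the combination $\Lambda_{0,2}(j - j^\epsilon) = \|j - j^\epsilon\|_{0,4} + \|j - j^\epsilon\|_{0,2}$ appears in the statement. Setting $\Phi(t) := \mathbb{E}[\sup_{0 \le s \le t}e^{-\alpha s}|Y_s|^2]$ and collecting all pieces,
\[
\Phi(t) \;\le\; M_0\,\Lambda_{0,2}^2(j - j^\epsilon) + \kappa \int_0^t \Phi(s)\,ds,
\]
so that Gronwall's lemma delivers the claim with $M$ depending only on $\alpha - \beta$, the Lipschitz bounds from (\ref{coeffAs}), and the dimensions $n, d$.

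The main obstacle, in my view, is the careful sup-moment handling of the compensated Poisson integral: one must verify via Kunita's inequality that both the $L^2(\nu)$ and $L^4(\nu)$ norms of the jump residual are genuinely needed — hence the combination $\Lambda_{0,2}$ rather than $\|\cdot\|_{0,2}$ alone — and this is done most cleanly by invoking the general moment estimate for jump-diffusion SDEs recorded in \cite{M-2008}, after which the Gronwall step is routine.
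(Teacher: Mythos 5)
Your setup mirrors the paper's: It\^o's formula applied to $e^{-\alpha t}\abs{Y_t}^2$ with $Y_t = X_t - X_t^\epsilon$, the decomposition $j(X_{s-},z) - j^\epsilon(X_{s-}^\epsilon,z) = [j(X_{s-},z)-j(X_{s-}^\epsilon,z)] + j(X_{s-}^\epsilon,z)\mathbb{1}_{\{j_0(z)\le\epsilon\}}$, the drift bound producing the $(-\alpha+\beta)$ coefficient, and the Kunita/BDG estimate on the compensated Poisson integral that forces both the $L^2(\nu)$ and $L^4(\nu)$ norms into the picture (whence $\Lambda_{0,2}$). All of that is on track. The gap is the final Gronwall step: the inequality
\begin{equation*}
\Phi(t) \le M_0\,\Lambda_{0,2}^2(j-j^\epsilon) + \kappa\int_0^t\Phi(s)\,\mathrm{d}s
\end{equation*}
with a constant $\kappa>0$ yields, by Gronwall, only $\Phi(t)\le M_0\,\Lambda_{0,2}^2(j-j^\epsilon)\,e^{\kappa t}$, which grows exponentially. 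The lemma asserts a bound with $M$ independent of $t$, and this uniformity is precisely what is used in the proof of Lemma~\ref{UniformConv}, where the estimate is integrated against $e^{-rs}$ over all of $(0,\infty)$. As written, your argument discards the negative drift (by ``absorbing'' the non-positive coefficient) and so loses the only mechanism that could produce a $t$-uniform bound.

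The paper's proof keeps the sign information. Taking expectation in It\^o's formula \emph{without} the supremum gives the decay estimate (\ref{est-01}),
\begin{equation*}
\big(\alpha-\beta-\varepsilon\beta_j\big)\,\mathbb{E}\!\left[\int_0^t e^{-\alpha s}\abs{Y_s}^2\,\mathrm{d}s\right] + \mathbb{E}\big[e^{-\alpha t}\abs{Y_t}^2\big] \le \frac{1+C_\varepsilon}{\alpha}\norm{j-j^\epsilon}_{0,2}^2,
\end{equation*}
which bounds $\mathbb{E}[\int_0^t e^{-\alpha s}\abs{Y_s}^2\,\mathrm{d}s]$ uniformly in $t$ because $\alpha>\beta$. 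The sup-moment estimate obtained from BDG (\ref{stocheq1})--(\ref{stocheq2}) then has on its right-hand side exactly this integral quantity, together with a small multiple of $\mathbb{E}[\sup_{s\le t}e^{-\alpha s}\abs{Y_s}^2]$ itself (the $\tfrac13$-absorption in (\ref{b-est}) and its jump analogue), and substituting (\ref{est-01}) closes the estimate with a $t$-uniform constant. So the fix is to retain the negative drift term, use it first to derive the $t$-uniform integral bound, and then feed that into the sup-moment inequality rather than appealing to Gronwall.
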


\begin{proof}
See the Appendix.
\end{proof}

\begin{lem}
\label{UniformConv}
Assume (\ref{coeffAs}), (\ref{fLip}), and suppose $r$ is sufficiently large.  The value function $u_{\epsilon}$ corresponding to a jump function $j^{\epsilon}$ converges uniformly on $\mathbb{R}^{n}$ to $u$, i.e., $u_{\epsilon}\overset{\mathrm{unif}}\longrightarrow u$ on $\mathbb{R}^{n}$.  
\end{lem}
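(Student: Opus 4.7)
The plan is to obtain a bound $|J_x[V] - J^\epsilon_x[V]| \leq C\Lambda_{0,2}(j-j^\epsilon)$ that is uniform in the admissible control $V$ and the starting point $x$, then conclude uniform convergence by taking infima on both sides and using that $\Lambda_{0,2}(j-j^\epsilon)\to 0$. Fix an admissible $V = (\tau_i,\xi_i)$ and realize both $X^V$ and $X^{\epsilon,V}$ on a common probability space driven by the same $W$ and $N$, starting at the same $x$ and receiving the same impulses $\xi_i$ at the same times $\tau_i$. Because the impulses enter both SDEs additively and identically, the difference $Y_t := X^V_t - X^{\epsilon,V}_t$ is continuous across impulse times and satisfies a jump-diffusion SDE with the usual Lipschitz drift and diffusion coefficients and driving jump ``coefficient'' $j-j^\epsilon$. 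Consequently the proof of Lemma \ref{moment-est} in the uncontrolled case applies verbatim to yield
\begin{equation*}
\mathbb{E}\Big[\sup_{s\geq 0} e^{-\alpha s}|X^V_s - X^{\epsilon,V}_s|^2\Big] \leq M\,\Lambda^2_{0,2}(j-j^\epsilon),
\end{equation*}
with $M$ independent of $V$ and of $x$.

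The transaction-cost sums $\sum_i e^{-r\tau_i}B(\xi_i)$ cancel exactly in $J_x[V] - J^\epsilon_x[V]$. Applying (\ref{fLip}), splitting $e^{-rs} = e^{-(r-\alpha/2)s}\,e^{-\alpha s/2}$, and bounding $\mathbb{E}[e^{-\alpha s/2}|Y_s|]$ by Cauchy--Schwarz against the moment estimate above then gives
\begin{equation*}
|J_x[V] - J^\epsilon_x[V]| \leq C_f\int_0^\infty e^{-rs}\mathbb{E}|Y_s|\,ds \leq \frac{C_f\sqrt{M}}{r-\alpha/2}\,\Lambda_{0,2}(j-j^\epsilon),
\end{equation*}
provided $r > \alpha/2$; this is exactly what the ``$r$ sufficiently large'' hypothesis is used for. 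The right-hand side depends on neither $V$ nor $x$, so the elementary inequality $|\inf g - \inf h| \leq \sup|g-h|$ delivers
\begin{equation*}
\|u - u_\epsilon\|_{L^\infty(\mathbb{R}^n)} \leq \frac{C_f\sqrt{M}}{r-\alpha/2}\,\Lambda_{0,2}(j-j^\epsilon).
\end{equation*}

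It remains to verify $\Lambda_{0,2}(j-j^\epsilon) \to 0$, i.e.\ $\|j-j^\epsilon\|_{0,p}\to 0$ for $p\in\{2,4\}$. The pointwise bound $|j-j^\epsilon|(x,z) \leq j_0(z)\mathbb{1}_{\{j_0(z)\leq \epsilon\}}$ together with the fact that $[j_0]^p\mathbb{1}_{\{j_0<1\}}$ is $\nu$-integrable, by (\ref{BdsBigjumps}) since $p\geq \gamma$, allow dominated convergence to yield the decay. I expect the main obstacle to be the careful justification that Lemma \ref{moment-est} extends to the controlled setting: one must confirm that the It\^o identity for $e^{-\alpha t}|Y_t|^2$ picks up no contribution at the impulse times (which is immediate from $\Delta Y_{\tau_i}=0$) and that the Gr\"onwall-type closure producing the constant $M$ is unaffected by the insertion of a countable family of stopping times into the trajectory.
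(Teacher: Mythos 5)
Your proof takes essentially the same approach as the paper's: both fix a control $V$, invoke the moment estimate of Lemma~\ref{moment-est} to bound $\mathbb{E}[\sup_{s\le t}|X_s-X^\epsilon_s|]$, use (\ref{fLip}) to compare $J_x[V]$ and $J^\epsilon_x[V]$, take infima over controls, and note the bound is independent of $x$. Your additional attention to the cancellation of impulse terms in the difference process and to the explicit verification that $\Lambda_{0,2}(j-j^\epsilon)\to 0$ (via dominated convergence against the $\nu$-integrability in (\ref{BdsBigjumps})) usefully fills in details the paper leaves implicit, but the underlying argument is identical.
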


\begin{proof}
Fix $\epsilon>0$ and let $X_{t}$ denote a solution to (\ref{sde}) with initial value $X_{0}=x$ and let $X^{\epsilon}_{t}$ denote a solution to (\ref{sde}) with jump function $j^{\epsilon}$ and initial value $X^{\epsilon}_{0}=x$.  From Lemma \ref{moment-est} and Jensen's inequality, we know for $\alpha>\beta$,
\begin{equation}
\label{est-for-result}
\begin{split}
\mathbb{E}\Bigg[ \underset{0\leq s \leq t}\sup\abs{X_{s}-X^{\epsilon}_{s}}\Bigg]&\leq e^{\alpha t/2}M^{1/2}\Lambda_{0,2}(j-j^{\epsilon}).
\end{split}
\end{equation}
Fix a control $V$ and let $J^{\epsilon}_{x}[V]$ denote the objective function (\ref{objfcn}) under $X^{\epsilon}$.  Using (\ref{fLip}) and (\ref{est-for-result}), we find
\begin{equation}
\begin{split}
J_{x}[V] &\leq J^{\epsilon}_{x}[V]+\mathbb{E} \Bigg [ \int_{0}^{\infty}e^{-rs}\abs{f(X_{s})-f(X^{\epsilon}_{s})}\mathrm{d}s \Bigg ]\\
&\leq J^{\epsilon}_{x}[V]+C_{f} \int_{0}^{\infty}e^{-rs}\mathbb{E}[\abs{X_{s}-X^{\epsilon}_{s}}]\mathrm{d}s\\
&\leq J^{\epsilon}_{x}[V]+C_{f} M^{1/2}\Lambda_{0,2}(j-j^{\epsilon})\int_{0}^{\infty}e^{-(r-\alpha/2)s}\mathrm{d}s.
\end{split}
\end{equation}
The final integral in the last inequality converges since $r$ is sufficiently large.  Let $C(\epsilon)$ denote the last term in the last inequality above.  Taking infimum over all controls yields
\begin{equation}
\begin{split}
u(x)\leq u_{\epsilon}^{*}(x) +C(\epsilon),
\end{split}
\end{equation}
where $C(\epsilon)\downarrow 0$ as $\epsilon \downarrow 0$.  Exchanging the roles of $X_{t}$ and $X^{\epsilon}_{t}$ yields $u_{\epsilon}(x)\leq u(x) +C(\epsilon)$.  Since $C(\epsilon)$ is independent of $x$, the convergence is uniform. 
\end{proof}

\begin{lem}
\label{W2ploc}
Assume (\ref{coeffAs}), (\ref{unifEllAs}), (\ref{fLip}), and suppose $r$ is sufficiently large.  In the continuation region $\mathcal{C}$, we have $u\in W^{2,p}_{\mathrm{loc}}(\mathcal{C})$.   
\end{lem}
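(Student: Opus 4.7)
The plan is to realize $u$ locally in $\mathcal{C}$ as a weak $W^{2,p}$ limit of the approximants $u_\epsilon$, with the uniform $W^{2,p}$ bound supplied by Proposition~\ref{AlmostLocal}. First I would fix $x_0\in\mathcal{C}$ and choose bounded open sets $\mathcal{O}'\Subset \mathcal{O}$ with $x_0\in \mathcal{O}'$ and $\overline{\mathcal{O}}\subset \mathcal{C}$. Since $|\mathcal{M}u(x)-\mathcal{M}u_\epsilon(x)|\leq \sup_\xi|u(x+\xi)-u_\epsilon(x+\xi)|$, Lemma~\ref{UniformConv} gives $\mathcal{M}u_\epsilon\to \mathcal{M}u$ uniformly. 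Since $u-\mathcal{M}u$ is continuous and strictly negative on the compact set $\overline{\mathcal{O}}$, this forces $\overline{\mathcal{O}}$ to lie inside the continuation region of $u_\epsilon$ for every sufficiently small $\epsilon$. Because $j^\epsilon\in L^1(\mathbb{R}^l,\nu)$, the main theorem of \cite{DGW-2009} then applies to $u_\epsilon$ and gives $u_\epsilon\in W^{2,p}_{\mathrm{loc}}(\mathbb{R}^n)$ together with the pointwise a.e.\ identity
\begin{equation*}
(-\mathcal{L}_D-I^\epsilon+r)u_\epsilon=f\quad\text{in }\mathcal{O},
\end{equation*}
where $I^\epsilon$ is the integro-differential operator built from $j^\epsilon$.

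Next I would apply Proposition~\ref{AlmostLocal} to $u_\epsilon$ with $I$ replaced by $I^\epsilon$ and with constants independent of $\epsilon$. The point is that the truncated jump $j^\epsilon(x,z)=j(x,z)\mathbb{1}_{\{j_0(z)>\epsilon\}}$ satisfies (\ref{BdsBigjumps})--(\ref{adjointAs}) with the \emph{same} constants as $j$: the $j_0$-bounds hold on the restricted support, (\ref{GMassumption}) is trivial on $\{j_0\leq \epsilon\}$ where $j^\epsilon\equiv 0$ and holds by hypothesis on the complement, and (\ref{adjointAs}) is preserved under multiplication by an indicator. Consequently the proofs of Lemma~\ref{IntSobEst} and of Proposition~\ref{AlmostLocal} go through verbatim for $I^\epsilon$, yielding
\begin{equation*}
\|u_\epsilon\|_{W^{2,p}(\mathcal{O}')}\leq C\bigl(\|f\|_{L^p(\mathcal{O})}+C_u+\|u_\epsilon\|_{L^\infty(\mathcal{O})}\bigr)
\end{equation*}
with $C$ independent of $\epsilon$. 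The Lipschitz constant $C_u$ is uniform in $\epsilon$ by Lemma~\ref{u*epLip}, and $\|u_\epsilon\|_{L^\infty(\mathcal{O})}$ is uniformly bounded since $\mathcal{O}$ is bounded and $u_\epsilon\to u$ uniformly. Hence $\{u_\epsilon\}$ is bounded in $W^{2,p}(\mathcal{O}')$.

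To finish, reflexivity of $W^{2,p}$ for $1<p<\infty$ allows me to extract a subsequence with $u_{\epsilon_k}\rightharpoonup v$ in $W^{2,p}(\mathcal{O}')$; Rellich--Kondrachov gives strong $L^p$ convergence, and the uniform limit $u$ forces $v=u$. Therefore $u\in W^{2,p}(\mathcal{O}')$, and since $x_0\in \mathcal{C}$ was arbitrary, $u\in W^{2,p}_{\mathrm{loc}}(\mathcal{C})$. I expect the main obstacle to be the $\epsilon$-uniformity of the constant in Proposition~\ref{AlmostLocal}: a careful audit of the proofs of the local $L^p$ estimates is needed to confirm that each constant depends only on jump bounds that pass intact to the truncation $j\mapsto j^\epsilon$, since any blow-up as $\epsilon\to 0$ would derail the weak-compactness argument.
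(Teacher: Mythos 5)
Your argument is correct and follows essentially the same route as the paper: use uniform convergence of $u_\epsilon$ and $\mathcal{M}u_\epsilon$ to trap a given compact subset of $\mathcal{C}$ inside the continuation region of $u_\epsilon$ for small $\epsilon$, apply the local $L^p$ estimate of Proposition~\ref{AlmostLocal} with an $\epsilon$-uniform constant, and pass to the weak limit. The one place you add genuine value is the explicit audit that the truncation $j\mapsto j^\epsilon$ preserves (\ref{BdsBigjumps})--(\ref{adjointAs}) with the same constants (so the constant in Proposition~\ref{AlmostLocal}, applied to $I^\epsilon$, does not blow up as $\epsilon\to 0$); the paper asserts this uniformity without comment, and your check is the right one -- the indicator depends only on $z$, so the Lipschitz, non-degeneracy, and Jacobian bounds transfer verbatim. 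Your concluding weak-compactness step (reflexivity plus Rellich--Kondrachov plus identification of the limit with $u$ via uniform convergence) is likewise the unpacked version of what the paper leaves implicit.
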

\begin{proof}
Let $B\subset \mathcal{C}$ be closed and bounded.  Let $\delta=\underset{B}\inf\{\mathcal{M}u(x)-u(x)\}>0$.  By Lemma \ref{UniformConv}, $u_{\epsilon}$ converges uniformly to $u$ on $\mathbb{R}^{n}$ which, in turn, implies $\mathcal{M}u_{\epsilon}$ converges uniformly to $\mathcal{M}u$ on $\mathbb{R}^{n}$.  Using this information, there exists a $\epsilon ' (\delta)>0$ such that for all $\epsilon\in (0,\epsilon'(\delta))$, it holds that $B\subset \{x\in \mathbb{R}^{n}: u_{\epsilon}(x)<\mathcal{M}u_{\epsilon}(x)\}$.  For an open set $\mathcal{O}\subset B$ and any $\epsilon\in(0,\epsilon'(\delta))$, the local estimate Proposition \ref{AlmostLocal} along with Lemmas \ref{u*epLip} and \ref{UniformConv} yield that $\norm{u_{\epsilon}}_{W^{2,p}(\mathcal{O})}\leq C$ for some constant $C$ independent of $\epsilon$.  Thus, a weak limit exists and must coincide with the value function $u$ due to Lemma \ref{UniformConv}.  Since $B$ was arbitrary, the proof is complete.  
\end{proof}

As in \cite{DGW-2009}, we can now use a ``bootstrap" argument to obtain further regularity of $u$ in $\mathcal{C}$.   

\begin{prpn}
\label{uHolder-cont}
Assume (\ref{coeffAs}), (\ref{BdsBigjumps}), (\ref{unifEllAs}), (\ref{fLip}), and suppose $r$ is sufficiently large.  For any compact subset $D\subset \mathcal{C}$ of the continuation region, the value function $u$ is in $C^{2,\frac{2\alpha-\gamma}{2}}(D)$ for any $\alpha\in [\gamma/2,1]$ and satisfies $(-\mathcal{L}_{D}-I+r)u -f =0$ in $\mathcal{C}$.
\end{prpn}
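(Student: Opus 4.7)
The plan is to combine the Sobolev regularity from Lemma~\ref{W2ploc} with the H\"{o}lder estimate for the integro-differential operator in Lemma~\ref{HolderReg}, and then close the argument with a classical Schauder estimate for the local operator $\mathcal{L}_{D}$.

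First, I would verify that $u$ satisfies the PDE $(-\mathcal{L}_{D}-I+r)u = f$ almost everywhere in $\mathcal{C}$. Since $u_{\epsilon} \to u$ uniformly by Lemma~\ref{UniformConv}, and $u_{\epsilon} \in W^{2,p}_{\mathrm{loc}}(\mathbb{R}^{n})$ solves the corresponding QVI with $j^{\epsilon}$ replacing $j$, a localized weak limit argument inside any ball $B \Subset \mathcal{C}$ — exactly of the kind used in the proof of Lemma~\ref{W2ploc} — shows that $u$ solves $(-\mathcal{L}_{D}-I+r)u = f$ in the strong sense a.e.\ in $\mathcal{C}$ (the obstacle $u - \mathcal{M}u$ being strictly negative on compact subsets of $\mathcal{C}$, and remaining negative for $u_{\epsilon}$ for small $\epsilon$).

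Next comes the bootstrap. Fix a compact $D \subset \mathcal{C}$ and choose nested compact sets $D \subset D_{2} \subset D_{1} \subset \mathcal{C}$ with $D_{1}^{\,1} \subset \mathcal{C}$ (possible after shrinking $D_{1}$, $D_{2}$). From Lemma~\ref{W2ploc}, $u \in W^{2,p}_{\mathrm{loc}}(\mathcal{C})$ for every $1 < p < \infty$. Taking $p$ large and using the Morrey embedding $W^{2,p}(D_{1}^{\,1}) \hookrightarrow C^{1,\alpha}(\overline{D_{1}^{\,1}})$ with $\alpha = 1 - n/p$, we can push $\alpha$ arbitrarily close to $1$, in particular into the range $[\gamma/2, 1]$ required by Lemma~\ref{HolderReg}. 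Combined with the global Lipschitz property of $u$ (Lemma~\ref{u*lip}), Lemma~\ref{HolderReg} then yields
\begin{equation*}
Iu \in C^{0,\frac{2\alpha-\gamma}{2}}(\overline{D_{1}}),\qquad \|Iu\|_{C^{0,(2\alpha-\gamma)/2}(\overline{D_{1}})} \le C\bigl(C_{u} + \|u\|_{C^{1,\alpha}(\overline{D_{1}^{\,1}})}\bigr).
\end{equation*}

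With this in hand, rewrite the equation valid on $\mathcal{C}$ as the purely local Poisson-type problem
\begin{equation*}
-\mathcal{L}_{D} u + r u \;=\; f + Iu \quad \text{in } D_{1}.
\end{equation*}
By (\ref{fLip}) the right-hand side lies in $C^{0,\frac{2\alpha-\gamma}{2}}(\overline{D_{1}})$ (Lipschitz implies this H\"{o}lder exponent), and by (\ref{coeffAs}) and (\ref{unifEllAs}) the coefficients of $\mathcal{L}_{D}$ are uniformly elliptic with Lipschitz (hence H\"{o}lder) coefficients. Classical interior Schauder estimates (e.g.\ Theorem~6.2 of Gilbarg--Trudinger) applied on the pair $D \subset D_{1}$ then give
\begin{equation*}
u \in C^{2,\frac{2\alpha-\gamma}{2}}(D),
\end{equation*}
with the desired quantitative estimate, and the equation $(-\mathcal{L}_{D}-I+r)u - f = 0$ now holds in the classical sense pointwise on $\mathcal{C}$.

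The only delicate point I foresee is keeping the nonlocal/local domains aligned: Lemma~\ref{HolderReg} needs $C^{1,\alpha}$ control on the enlarged set $\overline{D_{1}^{\,1}}$ in order to produce H\"{o}lder regularity of $Iu$ on $\overline{D_{1}}$, which in turn feeds the Schauder estimate on $D$. As long as one chooses the nested sets so that $D_{1}^{\,1}$ still sits inside $\mathcal{C}$ — which is possible whenever $\mathrm{dist}(D,\partial\mathcal{C})$ is positive and we shrink accordingly — the bootstrap closes cleanly. The use of $p$ arbitrarily large in the Morrey embedding is what gives us freedom to land at the exponent $(2\alpha-\gamma)/2$ for any admissible $\alpha$.
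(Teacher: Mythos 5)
Your overall strategy---Sobolev embedding, then H\"older regularity of $Iu$ via Lemma~\ref{HolderReg}, then Schauder for the local part---matches the paper's. But there is a concrete gap in the domain bookkeeping. You claim that for an arbitrary compact $D\subset\mathcal{C}$ one can choose a compact $D_{1}\supset D$ with $D_{1}^{1}\subset\mathcal{C}$, asserting this is ``possible whenever $\mathrm{dist}(D,\partial\mathcal{C})$ is positive and we shrink accordingly.'' This fails: since $D_{1}\supset D$ forces $D_{1}^{1}\supset D^{1}$, the containment $D_{1}^{1}\subset\mathcal{C}$ requires $\mathrm{dist}(D,\partial\mathcal{C})\geq 1$, which is not implied by $D$ being a compact subset of $\mathcal{C}$; shrinking $D_{1}$ is not available because you also need $D\subset D_{1}$. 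As written, your bootstrap only proves the conclusion for compact sets at distance at least $1$ from $\partial\mathcal{C}$. The paper explicitly resolves this: the outer $1$-neighborhood in Lemma~\ref{HolderReg} is an artifact of splitting jumps at the threshold $j_{0}(z)<1$, and re-deriving the lemma with a cutoff at $j_{0}(z)<\epsilon$ for $\epsilon\in(0,1)$ yields the same H\"older estimate under the weaker hypothesis $\varphi\in C^{1,\alpha}(\overline{\Omega^{\epsilon}})$. Choosing $\epsilon<\mathrm{dist}(D,\partial\mathcal{C})$ then makes the geometry close for every compact $D\subset\mathcal{C}$.

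A secondary issue: Theorem~6.2 of Gilbarg--Trudinger is an a priori estimate for solutions already known to be $C^{2,\alpha}$, not a regularity-upgrade theorem, so it does not by itself carry you from $W^{2,p}\cap C^{1,\alpha}$ with a H\"older right-hand side to $C^{2,\alpha}$. The paper instead solves the classical Dirichlet problem $(-\mathcal{L}_{D}+r)v=f+Iu$ on a ball $B\subset D$ with boundary data $u$, obtains $v\in C^{2,(2\alpha-\gamma)/2}(B)$ by Schauder existence theory, and identifies $v=u$ by uniqueness of viscosity solutions as in \cite{DGW-2009}. This simultaneously upgrades the regularity and establishes that $u$ satisfies $(-\mathcal{L}_{D}-I+r)u=f$ in $\mathcal{C}$, making your preliminary ``verify the PDE a.e.\ via weak limits'' step unnecessary. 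Your route could be salvaged by invoking a genuine Schauder regularity theorem or the Dirichlet/uniqueness device, but the domain-sizing gap above is the point that actually needs repair.
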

\begin{proof}
First, consider any compact set $D$ such that $\overline{D^{1}}\subset \mathcal{C}$.  From Lemma \ref{W2ploc}, $u\in W^{2,p}(D^{1})$ for $p\in (1,\infty)$ from which Sobolev imbedding implies $u\in C^{1,\alpha}(\overline{D^{1}})$ for any $\alpha\in (0,1)$. Using this result and applying Lemma \ref{HolderReg}, we know that $Iu\in C^{0,\frac{2\alpha-\gamma}{2}}(D)$ for $\alpha\in[\gamma/2,1]$.  We now have enough regularity to use the Schauder estimates to improve our results.  Indeed, for any open ball $B\subset D\subset \overline{D^{1}} \subset \mathcal{C}$, the solution $v$ of the following classical Dirichlet problem
 \begin{equation}
\begin{split}
\begin{cases}
(-\mathcal{L}_{D}+r)v(x) =f(x)+Iu(x)& \text{a.e.} \ x\in B,\\
v(x)=u(x) & x \in \partial B,
\end{cases}
\end{split}
\end{equation}
is in $C^{2,\frac{2\alpha-\gamma}{2}}(B)$ by the Schauder estimates since $f+Iu(x)\in C^{0,\frac{2\alpha-\gamma}{2}}(D)$.  Now, from classical uniqueness results of viscosity solutions as used in Lemma 5.4 in \cite{DGW-2009} (see also final paragraph in Theorem 5.5 in \cite{DGW-2009}), we conclude $v=u\in C^{2,\frac{2\alpha-\gamma}{2}}(B)$ for any open ball $B\subset D$.  The choice of a compact set $D$ such that $\overline{D^{1}}\subset \mathcal{C}$ was necessary in order to apply Lemma \ref{HolderReg}.  However, the outer $1$-neighborhood $\Omega^{1}$  appears there as a result of our choice of magnitude $1$ to separate large and small jumps.  If, instead, we take any $\epsilon\in(0,1)$ to separate jump behavior, we would reach an analogous conclusion $u\in C^{2,\frac{2\alpha-\gamma}{2}}(B)$ for any open ball $B\subset D$ where $\overline{D^{\epsilon}}\subset \mathcal{C}$.  Hence, we find $u\in C^{2,\frac{2\alpha-\gamma}{2}}(C)$ for any compact set $C\subset \mathcal{C}$ and satisfies $(-\mathcal{L}_{D}-I+r)u -f =0$ in $\mathcal{C}$.     
\end{proof}

\section{Regularity in $\mathbb{R}^{n}$}  
\label{wholespace}
In this section, we investigate the regularity of the value function $u$ on the whole space.  The authors in \cite{DGW-2009} examine the regularity of $u$ under two specific assumptions concerning the L\'{e}vy measure: $\nu$ is finite and $j(x,\cdot)\in L^{1}(\nu)$.  These two assumptions describe qualities of the L\'{e}vy kernel $M(x,\mathrm{d}\eta)$ where
 \begin{equation*}
 \begin{split}
 M(x,A):=\nu\{z:j(x,z)\in A\}, \ A \ \text {-Borel measurable subset in} \ \mathbb{R}^{n},
 \end{split}
 \end{equation*}
which, in turn, determine the order of integro-differential operator $I$ (see Definition 2.1.2 in \cite{GM-2002}).  The assumptions taken in \cite{DGW-2009} concern integro-differential operators of order $\leq 1$.  Such operators map smooth functions to smooth functions.  For example, Lemma 5.1 in \cite{DGW-2009} shows that $I$ maps Lipschitz functions to Lipschitz functions when $I$ has order $0$.  Additionally, when $j(x,\cdot)\in L^{1}(\nu)$, Lemma 3.2 in \cite{DGW-2009} shows that $I$ maps a Lipschitz function to a continuous function when $C_{j}(\cdot)$ is $\nu$-integrable.  Since the value function for impulse control $u$ is Lipschitz continuous, it is known that $Iu$ is at least a continuous function under either assumption on $M(x,\mathrm{d}\eta)$.  As the authors in \cite{DGW-2009} demonstrate, the continuity of $Iu$ allows for a regularity analysis as in the pure diffusion case after defining a new running cost function $\tilde{f}:=f+Iu$.  Under our assumptions on $M(x,\mathrm{d}\eta)$, it is not known a priori that $Iu$ is continuous for Lipschitz continuous $u$ (for a similar discussion see \cite{BX-2009}).  As such, we cannot define $\tilde{f}$ as in \cite{DGW-2009} and must directly deal with the integro-differential operator. 

\subsection{Bounded Domain Approach} \label{sec:bdddmap}
With an integro-differential operator $I$ of order $\leq 1$, the authors in \cite{DGW-2009} show $u\in W^{2,p}_{\mathrm{loc}}(\mathbb{R}^{n})$ by studying the regularity of an associated optimal stopping time problem for a pure diffusion on bounded open sets of $\mathbb{R}^{n}$ (see Section 6 in \cite{DGW-2009}).  With a general jump case considered here, it is natural to consider the possibility of a similar proof argument involving an optimal stopping time problem for jump diffusions on bounded open sets of $\mathbb{R}^{n}$. \\ 
\indent Through penalization, regularity of an associated optimal stopping problem in a bounded open set $\mathcal{O}$ arises from the regularity of a Dirichlet problem.  As such, we may first consider the existence, uniqueness and regularity of a solution of the following Dirichlet problem:
\begin{equation}
\label{Dirichlet}
\begin{split}
\begin{cases}
(-\mathcal{L}_{D}-I+r)v(x)=f(x), \ x\in\mathcal{O},\\
v(x)=u(x), \ x\in \mathbb{R}^{n}\setminus \mathcal{O}.
\end{cases}
\end{split}
\end{equation} 
Notice that the non-local character of $I$ requires that the solution $v$ be defined on the support of the L\'{e}vy kernel $M(x,\cdot)$, namely, $\mathbb{R}^{n}$.  Integro-differential problems as above have been extensively discussed in the literature (see e.g. \cite{GM-2002}, \cite{GL-1984}, \cite{L-1982}).  Recalling this analysis, when studying (\ref{Dirichlet}) with a integro-differential operator $I$ of order $(1,2]$, $W^{2,p}(\mathcal{O})$ solutions exist if an extra condition is placed upon jumps outside of $\mathcal{O}$ (see (\ref{GLcondition})).  In the absence of this modification, only variational solutions in $W^{1,p}(\mathcal{O})$ exist.  The lack of dependence upon the fixed bounded open set $\mathcal{O}$ for $I$ of order $\leq 1$ renders this approach useful for establishing the regularity of $u$.  In fact, such an argument would essentially be the same as the analysis undertaken in both \cite{DGW-2009} and \cite{M-1980-1}.  The existence of this extra condition upon jumps outside $\mathcal{O}$ for integro-differential operators of order $>1$ does not disqualify this method from helping to achieve regularity for an optimal stopping problem associated to impulse control.  Indeed, the extra jump condition (\ref{GLcondition}) might automatically be satisfied depending on the value of $\gamma$ taken in (\ref{BdsBigjumps}).  To see this, consider the following two-step problem associated to (\ref{Dirichlet}).
\begin{equation}
\label{onestep}
\begin{split} 
\begin{cases}
(-\mathcal{L}_{D}+r)z(x)=0, \ x\in\mathcal{O},\\
z(x)=u(x), \ x\in  \mathbb{R}^{n} \setminus \mathcal{O},
\end{cases}
\end{split}
\end{equation}
and
\begin{equation}
\label{twostep}
\begin{split}
\begin{cases}
(-\mathcal{L}_{D}-I+r)w(x)=f(x)+Iz(x), \ x\in\mathcal{O},\\
w(x)=0, \ x\in \mathbb{R}^{n} \setminus \mathcal{O}.
\end{cases}
\end{split}
\end{equation}
If solutions exist to each problem, then $v=z+w$ will solve (\ref{Dirichlet}).  Sufficient conditions to solve (\ref{onestep}) are well-known and can be found in \cite{GT-2001}.  For (\ref{twostep}), there is a unique solution $w\in W^{2,p}(\mathcal{O})$ (see Theorem III.3 in \cite{GL-1984} and Theorem 3.1.22 in \cite{GM-2002}) if 
\begin{equation}
\label{GLcondition}
\begin{split}
\sup_{x\in \overline{\mathcal{O}}}\int \mathbb{1}_{\mathbb{R}^{n}\setminus \overline{\mathcal{O}}}(x+j(x,z))\abs{j(x,z)}^{1+\alpha}\nu(\mathrm{d}z)<\infty,
\end{split}
\end{equation}
where $0<\alpha<1/n$ and if $f+Iz\in L^{p}(\mathcal{O})$ for $n<p<1/\alpha$.  The condition (\ref{GLcondition}) is satisfied if $\gamma\in [1,2]$ in (\ref{BdsBigjumps}) is taken to satisfy $0<\gamma-1<1/n$.  Thus, we might be able to pursue this technique for showing regularity under a restricted set of $\gamma$ values in $[1,2]$ which depend upon the dimension $n$.  Even if we are content with this restriction, we cannot conclude the existence of a unique solution $w\in W^{2,p}(\mathcal{O})$ until $Iz\in L^{p}(\mathcal{O})$ for $n<p<1/\alpha$ is justified.  Recalling the classical results of Corollary 9.18 in \cite{GT-2001}, we know that $z\in W^{2,p}_{\mathrm{loc}}(\mathcal{O})\cap C^{0}(\overline{\mathcal{O}})$ from which Sobolev embedding implies that $z\in C^{0,1}(K)$ for any compact $K \subset \mathcal{O}$.  Since $z=u$ on $\mathbb{R}^{n}\setminus \mathcal{O}$, we can conclude that $z$ is Lipschitz continuous on $\mathbb{R}^{n}$.  However, $z$ Lipschitz continuous on $\mathbb{R}^{n}$ does not guarantee that $Iz\in L^{p}(\mathcal{O})$.  Essentially, unless we know more regularity about the solution $z$ with Lipschitz boundary function $u$, we are unable to obtain a $W^{2,p}(\mathcal{O})$ solution to (\ref{twostep}).  Due to this complication and the additional restriction to $\gamma$ beyond (\ref{BdsBigjumps}), we instead pursue an analysis of an integro-differential problem on the whole space rather than on a bounded open set $\mathcal{O}$.     

\subsection{The Whole Space Approach}\label{sec:wspapp}
In this section, we establish the following main theorem.  
\begin{thm}\label{thm:main}
Let the assumptions of Section \ref{assumptions} hold.  The value function of impulse control $u$ has a weak derivative up to order $2$ in $L^{p}(\mathcal{O})$ for $1<p<\infty$ and any bounded open set $\mathcal{O}$, i.e, $u\in W^{2,p}_{\mathrm{loc}}(\mathbb{R}^{n})$.  
\end{thm}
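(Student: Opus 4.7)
The strategy follows the three-step program sketched in the introduction, designed to avoid the Dirichlet-problem-on-a-bounded-domain route that failed in Section~\ref{sec:bdddmap} because of boundary singularities for infinite variation jumps: (i) characterize $u$ as a distributional solution of the QVI on $\mathbb{R}^{n}$; (ii) show that the distribution $(-\mathcal{L}_{D}-I+r)u$ is in fact a locally bounded function on $\mathbb{R}^{n}$; (iii) combine (ii) with the local $L^{p}$ estimate of Proposition~\ref{AlmostLocal}, via regularization, to conclude $u\in W^{2,p}_{\mathrm{loc}}(\mathbb{R}^{n})$. The regularity already obtained inside the continuation region by Proposition~\ref{uHolder-cont} is the starting point.

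Step (i) combines the classical identity $(-\mathcal{L}_{D}-I+r)u=f$ on $\mathcal{C}$ from Proposition~\ref{uHolder-cont} with the pointwise equality $u=\mathcal{M}u$ on $\mathcal{A}$, tested against any $\varphi\in C_{c}^{\infty}(\mathbb{R}^{n})$ through the formal adjoints $\mathcal{L}_{D}^{*}$ and $I^{*}$, both of which make sense paired with the Lipschitz function $u$ (Lemma~\ref{u*lip}). This realizes $u$ as a distributional subsolution of the linear equation with right-hand side $f$, together with the obstacle constraint $u\leq \mathcal{M}u$. Step (ii) is the crux. The QVI immediately yields the upper bound $(-\mathcal{L}_{D}-I+r)u\leq f$, so the work is the lower bound on $\mathcal{A}$. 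I would exploit the classical fact that the subadditivity in (H9), $B(\xi_{1})+B(\xi_{2})\geq B(\xi_{1}+\xi_{2})+K$, forces any minimizer $\xi^{*}(x)$ realizing $\mathcal{M}u(x)$ at $x\in\mathcal{A}$ to send $x$ into the continuation region, so $y:=x+\xi^{*}(x)\in\mathcal{C}$, where Proposition~\ref{uHolder-cont} provides $u\in C^{2,(2\alpha-\gamma)/2}$. Using the inequality $u(\cdot)\leq u(\cdot+\xi^{*}(x))+B(\xi^{*}(x))$ with equality at $x$, one transports the second-order regularity of $u$ at $y$ back to $x$, the error in the one-sided bound being controlled by the semiconcavity input (H4). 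The non-local term $Iu(x)$ is handled by splitting small and large jumps: the large-jump part uses the global Lipschitz bound (Lemma~\ref{u*lip}); the small-jump part uses the transported $C^{1,\alpha}$ control together with Lemma~\ref{HolderReg}, with integrability against $\nu$ guaranteed by $\alpha\geq\gamma/2$ and (H5).

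Step (iii) is then almost mechanical. I would regularize $u$, either by mollification or by invoking the approximations $u_{\epsilon}$ of Lemma~\ref{UniformConv}, which lie in $W^{2,p}_{\mathrm{loc}}(\mathbb{R}^{n})$ by \cite{DGW-2009}, apply Proposition~\ref{AlmostLocal} to the regularization on a pair of nested bounded open sets, and pass to the limit. The right-hand side of the local estimate is controlled uniformly by the uniform Lipschitz constant (Lemma~\ref{u*epLip}), the uniform sup norm (Lemma~\ref{UniformConv}), and the $L^{\infty}_{\mathrm{loc}}$ bound produced in step (ii). Weak compactness in $W^{2,p}$ and uniqueness of the weak limit, pinned down by uniform convergence to $u$, yield $u\in W^{2,p}(\mathcal{O}')$ for arbitrary relatively compact $\mathcal{O}'$, proving the theorem. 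The principal obstacle I anticipate is step (ii): one must reconcile the impulse-translation regularity at $x\in\mathcal{A}$ with the fact that the coefficients of $\mathcal{L}_{D}$ and the jump function $j$ inside $I$ are evaluated at $x$ rather than at $y=x+\xi^{*}(x)$, and extract a quantitative local $L^{\infty}$ bound that does not degenerate as $x$ crosses the free boundary between $\mathcal{C}$ and $\mathcal{A}$.
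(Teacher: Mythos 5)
Your high-level three-step program (distributional characterization of the QVI, then a local $L^{\infty}$ bound on $(-\mathcal{L}_{D}-I+r)u$, then Proposition~\ref{AlmostLocal}) matches exactly the plan the paper announces in the introduction and carries out in Section~\ref{sec:wspapp}. However, the execution of your crucial Step~(ii) is genuinely different from the paper's and, as you yourself flag at the end, has a real gap.

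You propose to obtain the lower bound on the action set by transporting the $C^{2,\alpha}$ regularity of $u$ at $y=x+\xi^{*}(x)\in\mathcal{C}$ back to $x\in\mathcal{A}$ through the one-sided inequality $u(\cdot)\leq u(\cdot+\xi^{*}(x))+B(\xi^{*}(x))$ with equality at $x$. This is essentially the route taken in \cite{DGW-2009} for finite-variation jumps, and the paper explicitly abandons it for infinite-variation jumps. The reason is the one you anticipate: the transport $u(x')\leq u(x'+\xi^{*})+B(\xi^{*})$ controls expressions built from $u$ shifted by a fixed $\xi^{*}$, but $Iu(x)$ integrates $u(x+j(x,z))$, and $j(x,z)$ is not $j(y,z)$; the transported inequality does not give a one-sided bound on the integrand of $Iu$ at $x$. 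Your suggestion to "use Lemma~\ref{HolderReg} on the small-jump part" presupposes $C^{1,\alpha}$ control of $u$ in a full neighborhood of $x$, which is precisely what is unavailable on $\mathcal{A}$. Moreover, you appeal to "the semiconcavity input (H4)" as controlling the error, but (H4) is semi-concavity of the running cost $f$, not of $u$; the semi-concavity of $u$ is a nontrivial conclusion that must be established.

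What the paper actually does is establish semi-concavity of $u$ on all of $\mathbb{R}^{n}$ directly and probabilistically (Proposition~\ref{semiconcave-prpn}, via the moment estimates of Lemma~\ref{estsemiconcave}, using (\ref{coeffAs}), (\ref{coeffDerLip}) and the semi-concavity of $f$ in (\ref{fsemiconcave})), deduces semi-concavity of $\mathcal{M}u$ by the simple competitor argument, and then runs a mollification: for $g=\mathcal{M}u$ one bounds $\chi^{T}\nabla^{2}g^{\varepsilon}\chi\leq K$, from which each term of $A g^{\varepsilon}$ — including the nonlocal term, via the Taylor identity $\varphi(x+j)-\varphi(x)-j\cdot\nabla\varphi(x)=\int_{0}^{1}(1-\theta)j^{T}\nabla^{2}\varphi(x+\theta j)j\,d\theta$ and (\ref{BdsBigjumps}) — is bounded below by $-C(1+|x|^{2})$, uniformly in $\varepsilon$; then one passes to the distributional limit using (\ref{ADistn}). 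This is the key technical step you are missing: semi-concavity gives a uniform one-sided bound on the Hessian of the mollification, which bounds $Ig^{\varepsilon}$ directly without any transport through $\xi^{*}$ or any dichotomy between $\mathcal{C}$ and $\mathcal{A}$. Combined with $Au\leq f$ in $\mathcal{D}'(\mathbb{R}^{n})$ and $Au=f$ in $\mathcal{D}'(\mathcal{C})$ (Proposition~\ref{MartingaleToDistn}), this pins $Au$ between two locally bounded functions, so $Au\in B_{2}(\mathbb{R}^{n})$, and Proposition~\ref{AlmostLocal} finishes. Your Step~(iii) suggestion to re-run the $u_{\epsilon}$ approximation is also somewhat off: the uniform $L^{\infty}_{\mathrm{loc}}$ bound from Step~(ii) is a property of $u$, not of $u_{\epsilon}$, so one cannot feed it into the local estimate for $u_{\epsilon}$; the paper applies the local estimate directly once $Au$ is known to be a locally bounded function.
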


The subsections to follow pursue a proof of the above result.  In the first, we present a characterization of the value function $u$.  In the second, we discuss the semi-concavity of $u$ and $\mathcal{M}u$ which assists in establishing regularity in the third.  

The following function spaces will be useful in order to examine the regularity of the value function $u$ on $\mathbb{R}^{n}$.  Let  $B_{p}(\mathbb{R}^{n})$ denote the space of Borel measurable functions $h$ from $\mathbb{R}^{n}$ into $\mathbb{R}^{n}$ such that 
\begin{equation}
\norm{h}_{p}=\sup\{\abs{h(x)}(1+\abs{x}^{2})^{-p/2}:x\in \mathbb{R}^{n}\} < \infty.
\end{equation}
Let $C_{p}(\mathbb{R}^{n})$ denote the subspace of $B_{p}(\mathbb{R}^{n})$ composed of $p$-uniformly continuous functions, i.e., all functions $h$ which satisfy: for every $\epsilon>0$ there exists a $\delta=\delta(\epsilon,p)$ such that for any $x,x'\in \mathbb{R}^{n}$, we have
\begin{equation}
\begin{split}
\abs{h(x)-h(x')}\leq \epsilon(1+\abs{x}^{p}), \ \abs{x-x'}<\delta.
\end{split}
\end{equation}
Let $C^{+}_{p}(\mathbb{R}^{n})$ denote the class of all positive functions in $C_{p}(\mathbb{R}^{n})$.  

\subsubsection{QVI}
Let $A:=-\mathcal{L}_{D}-I+r$ as in (\ref{Aoperator}).
Following \cite{SV-1972}, for any functions $u,v\in B_{p}(\mathbb{R}^{n})$, we say
\begin{equation}
\begin{split}
&Au=v, \ \text{in} \ \mathbb{R}^{n} \ (\text{resp.} \ \leq) \ \text{if the process}\\
&Y_{t}=\int_{0}^{t}v(X_{s})e^{-rs}\mathrm{d}s+u(X_{t})e^{-rt}, t\geq 0,
\end{split}
\end{equation}
is a martingale (resp. submartingale), for every initial $x\in \mathbb{R}^{n}$.  The following proposition from \cite{M-1987} characterizes the value function for our impulse control problem $u$.  
\begin{prpn}
\label{QVI}
Assume (\ref{coeffAs}), (\ref{fLip}), (\ref{transactioncost}), and suppose $r$ is sufficiently large.  Then the quasi-variational inequality
\begin{equation}
\begin{split}
\begin{cases}
\hat{u}\in C^{+}_{p}(\mathbb{R}^{n})\\
A\hat{u}\leq f \ \text{in} \ \mathbb{R}^{n}, \hat{u}\leq \mathcal{M}\hat{u} \ \text{in} \ \mathbb{R}^{n}, \\
A\hat{u}=f \ \text{in} \ [\hat{u}<\mathcal{M}\hat{u}],
\end{cases}
\end{split}
\end{equation}
with $[\hat{u}<\mathcal{M}\hat{u}]$ denoting the set of points $x$ such that $\hat{u}(x)<\mathcal{M}\hat{u}(x)$ has one and only one solution, which is given explicitly as the optimal cost for impulse control $u$.\\
\end{prpn}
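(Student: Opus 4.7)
The plan is to follow the dynamic programming approach of \cite{M-1987}, split into two parts: existence (with $\hat{u}=u$ the value function) and uniqueness within the class $C^+_p(\mathbb{R}^n)$.

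For existence, I first verify the regularity requirement $u\in C^+_p(\mathbb{R}^n)$: nonnegativity of $u$ follows from $f\geq 0$ and $B\geq K>0$, while the Lipschitz bound of Lemma \ref{u*lip} yields linear growth, so $u$ lies in $C^+_p$ for any $p\geq 1$. The inequality $u\leq \mathcal{M}u$ is immediate from comparing the value function against any strategy that starts with an immediate impulse $\xi$, giving $u(x)\leq B(\xi)+u(x+\xi)$, then infimizing over $\xi$. To obtain $Au\leq f$ in the martingale sense of \cite{SV-1972}, I apply the dynamic programming principle to the strategy ``do nothing on $[0,t]$, then act optimally'', which yields
$$u(x)\leq \mathbb{E}_x\left[\int_0^t e^{-rs}f(X_s)\,\mathrm{d}s+e^{-rt}u(X_t)\right].$$
This is precisely the submartingale property of $Y_t$ that defines $Au\leq f$. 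Finally, $Au=f$ on $[u<\mathcal{M}u]$ follows by observing that on the continuation region it is strictly suboptimal to impulse before the first entry time $\tau$ into $[u=\mathcal{M}u]$, so the DPP inequality becomes an equality up to $\tau$.

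For uniqueness, let $\hat{u}\in C^+_p(\mathbb{R}^n)$ solve the QVI. To show $\hat{u}\leq u$, fix an admissible control $V=(\tau_i,\xi_i)$ and apply the generalized Dynkin formula coming from $A\hat{u}\leq f$ on each interval $[\tau_{i-1},\tau_i)$; at each impulse, $\hat{u}(X_{\tau_i^-})\leq \mathcal{M}\hat{u}(X_{\tau_i^-})\leq B(\xi_i)+\hat{u}(X_{\tau_i})$. Telescoping these bounds and letting the number of impulses tend to infinity, using the discount factor together with the $p$-growth of $\hat{u}$ to kill the tail, yields $\hat{u}(x)\leq J_x[V]$; infimizing over $V$ gives $\hat{u}\leq u$. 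For the reverse inequality I construct a nearly optimal feedback strategy from $\hat{u}$: set $\tau_1=\inf\{t>0:\hat{u}(X_t)=\mathcal{M}\hat{u}(X_t)\}$, pick $\xi_1$ to be an $\epsilon$-minimizer of $\mathcal{M}\hat{u}$ at $X_{\tau_1^-}$, and iterate. On each interval $[\tau_{i-1},\tau_i)$ the process remains in $[\hat{u}<\mathcal{M}\hat{u}]$, where $A\hat{u}=f$, so the Dynkin identity holds with equality and telescoping gives $\hat{u}(x)\geq J_x[V^\epsilon]-\epsilon\geq u(x)-\epsilon$. Letting $\epsilon\to 0$ closes the loop.

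The main obstacle will be verifying admissibility of the constructed $\epsilon$-optimal strategy: namely, that $\tau_i\to\infty$ almost surely, so no finite accumulation of impulses occurs, and that the discounted tail $\mathbb{E}_x[e^{-r\tau_n}\hat{u}(X_{\tau_n})]\to 0$ as $n\to\infty$. Both follow from the subadditivity clause in (\ref{transactioncost}), which forces each impulse to cost at least $K>0$, combined with the $p$-growth of $\hat{u}$ and the moment estimates on $X$ under an $r$ chosen sufficiently large; this is precisely where the ``sufficiently large $r$'' hypothesis is essential. A secondary subtlety is that $\hat{u}$ only lies in $C^+_p$, so It\^o's formula is not directly available; this is why the operator $A$ is interpreted in the generalized martingale sense of \cite{SV-1972}, allowing the Dynkin identity to be read off from the (sub)martingale property of $Y_t$ without ever differentiating $\hat{u}$.
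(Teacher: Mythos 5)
The paper does not give its own proof of Proposition~\ref{QVI}: it is stated with the remark ``the following proposition from \cite{M-1987} characterizes the value function for our impulse control problem $u$,'' and the burden of proof is deferred entirely to that reference. So there is no argument in this paper to compare your sketch against line by line; what you have produced is a reconstruction of the standard dynamic-programming/verification scheme that the cited reference carries out.

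As an outline your reconstruction is sound and hits the right structure: the existence half establishes the membership $u\in C^+_p$, the sub-optimality bound $u\le\mathcal{M}u$, the submartingale inequality $Au\le f$ from the DPP ``wait-then-optimize'' strategy, and the equality on the continuation set; the uniqueness half is a two-sided verification argument with the discounted Dynkin identity read off from the martingale/submartingale characterization of $A$ rather than from It\^{o}'s formula. You also correctly name the real technical pressure points: ruling out accumulation of impulses (which uses the floor $K>0$ and the subadditivity in (\ref{transactioncost})), killing the discounted tail $\mathbb{E}_x[e^{-r\tau_n}\hat u(X_{\tau_n})]$ using the $p$-growth of $\hat u$ together with moment estimates and a sufficiently large $r$, and working with the \cite{SV-1972} martingale interpretation in place of pointwise differentiation.

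Two things your sketch passes over quickly, which in a full write-up would need attention. First, the dynamic programming principle is invoked without justification; for an impulse-controlled jump diffusion with the generality of this paper, DPP itself is nontrivial and in \cite{M-1987} it is established via the sequence of iterated optimal-stopping problems rather than assumed. Second, in the direction $\hat u\le u$ you need to be explicit that the martingale characterization of $A\hat u\le f$ refers to the \emph{uncontrolled} process $X$, and that it applies because the controlled trajectory under any admissible $V$ is a concatenation of pieces of the uncontrolled flow on the intervals $[\tau_{i-1},\tau_i)$; you gesture at this by saying ``on each interval,'' but the gluing and the passage to the limit over the (possibly infinite) sequence of impulses, with the quadratic growth of $\hat u$ controlled by the discount, is exactly where the work is. Neither of these is a wrong turn --- they are the content of \cite{M-1987}, and your sketch identifies them as the obstacles rather than dismissing them.
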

\indent We can also give $Au$ a meaning as a distribution. In fact using the Lipschitz continuity of $u$, (\ref{GMassumption}), and (\ref{adjointAs}) we can see that this distribution satisfies,
for any open set $\mathcal{O}$ in $\mathbb{R}^{n}$ and every test function $\varphi \in \mathcal{D}(\mathcal{O})$ (compactly supported infinitely differentiable functions),
\begin{equation}
\label{ADistn}
\begin{split}
\langle Au,\varphi \rangle &=\sum_{i,j=1}^{n}\int_{\mathcal{O}}a_{ij}(x)\partial_{x_{i}}u(x)\partial_{x_{j}}\varphi(x)\mathrm{d}x\\
&-\sum_{i=1}^{n}\int_{\mathcal{O}}\mu_{i}(x)\partial_{x_{i}}[u(x)]\varphi(x)\mathrm{d}x+\int_{\mathcal{O}}ru(x)\varphi(x)\mathrm{d}x\\
&-\int_{\mathcal{O}}u(y)\mathrm{d}y \times\int_{\{j_{0}<1\}}[\varphi(y-j^*(y,z))-\varphi(y)+\nabla \varphi(y)\cdot j^*(y,z)]m^*(y,z)\nu(\mathrm{d}z)\\
&-\int_{\mathcal{O}}u(y)\mathrm{d}y \times\int_{\{j_{0}\geq1\}}[\varphi(y-j^*(y,z))-\varphi(y)]m^*(y,z)\nu(\mathrm{d}z)\\
&-\int_{\mathcal{O}}u(y)\mathrm{d}y \times \left(\int_{\{j_{0}<1\}}[j(y,z)-j^*(y,z)m^*(y,z)]\nu(\mathrm{d}z) \right)\cdot\nabla \varphi(y)\\ 
&-\int_{\mathcal{O}}u(y)\varphi(y)\mathrm{d}y \\
& \times \left(\int_{\{j_{0}\geq1\}}[m^{*}(y,z)-1]\nu(\mathrm{d}z)+ \int_{\{j_{0}<1\}}[m^*(y,z)+\nabla \cdot j(y,z)-1]\nu(\mathrm{d}z)\right),
\end{split}
\end{equation}
with $\mu_{i}=b_{i}-\sum_{j=1}^{n}\partial_{x_{j}}[a_{ij}]$, $j^*(y,z)=j(x(y,z),z)$, $m^*(y,z)=\mathrm{det}(\partial x(y,z)/\partial y)$ and the change of variable $y=x+j(x,z)$ (c.f. Section 2.4 in \cite{GM-2002}).\\
\indent The following proposition shows that the value function $u$ is a distributional solution once it is a martingale solution as above.
\begin{prpn}
\label{MartingaleToDistn}
Let $u$ be the value function of impulse control under the assumptions of Section \ref{assumptions} and suppose $U$ is an open set in $\mathbb{R}^{n}$.  The property that $Y_{t}=\int_{0}^{t}f(X_{s})e^{-rs}\mathrm{d}s+u(X_{t})e^{-rt}$
is a submartingale (resp. martingale) for every initial $x\in U$ implies that $Au\leq f$ (\text{resp.} \ Au=f) in $\mathcal{D}'(U)$, i.e. the inequality (\text{resp.} equality) is satisfied in the distributional sense.
\end{prpn}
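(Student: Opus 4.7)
The plan is to translate the pointwise (sub)martingale identity into the distributional one by averaging against test functions. Fix a nonnegative $\varphi \in \mathcal{D}(U)$ and write $T_t g(x) := \mathbb{E}_x[g(X_t)]$ for the transition semigroup of $X$, and $\tilde T_t := e^{-rt}T_t$ for its discounted version, whose generator is $\mathcal{L}-r=-A$. The submartingale inequality $\mathbb{E}_x Y_t \geq u(x)$ on $U$, averaged against $\varphi$ and rearranged, reads
\begin{equation*}
\int_0^t e^{-rs}\!\int \varphi(x)\,T_s f(x)\,\mathrm{d}x\,\mathrm{d}s \;+\; \int \varphi(x)\bigl[\tilde T_t u(x) - u(x)\bigr]\mathrm{d}x \;\geq\; 0.
\end{equation*}
Using Fubini and the existence of a transition density $p_t(x,y)$ guaranteed by the uniform ellipticity (\ref{unifEllAs}), the second integral equals $\langle u,\,\tilde T_t^*\varphi - \varphi\rangle$, where $\tilde T_t^*\varphi(y) := e^{-rt}\int\varphi(x)p_t(x,y)\,\mathrm{d}x$ is the adjoint discounted semigroup acting on the test function.

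Dividing by $t$ and sending $t\downarrow 0$, the first summand tends to $\langle f,\varphi\rangle$ by the Lipschitz continuity of $f$ and the Feller continuity of $T_s$ on $\mathrm{supp}\,\varphi$. For the second summand, the key identity is that $\tilde T_s^*\varphi$ solves the adjoint forward equation $\partial_s\tilde T_s^*\varphi = -A^*\tilde T_s^*\varphi$ with $\tilde T_0^*\varphi = \varphi$, where $A^*$ is the formal adjoint appearing in the integration-by-parts formula (\ref{ADistn}). Integration in $s$ and Fubini then give
\begin{equation*}
\langle u,\,\tilde T_t^*\varphi - \varphi\rangle \;=\; -\int_0^t \langle u,\, A^*\tilde T_s^*\varphi\rangle\,\mathrm{d}s \;=\; -\int_0^t \langle Au,\,\tilde T_s^*\varphi\rangle\,\mathrm{d}s,
\end{equation*}
where the second equality is precisely the definition of the distribution $Au$ from (\ref{ADistn}) paired with the smooth function $\tilde T_s^*\varphi$. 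Dividing by $t$ and using continuity of $s\mapsto\tilde T_s^*\varphi$ at $s=0$, this quotient converges to $-\langle Au,\varphi\rangle$.

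Combining the two limits yields $\langle f - Au,\varphi\rangle \geq 0$ for every nonnegative $\varphi \in \mathcal{D}(U)$, i.e., $Au \leq f$ in $\mathcal{D}'(U)$. The martingale case is identical: equality replaces inequality throughout and, since the sign restriction on $\varphi$ is no longer needed, one obtains $Au = f$ in $\mathcal{D}'(U)$.

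The principal obstacle will be the rigorous handling of the adjoint semigroup $\tilde T_s^*\varphi$ in the infinite-variation regime: one must verify that for every $s>0$ the function $\tilde T_s^*\varphi$ is smooth enough and decays fast enough to pair with the Lipschitz $u$ via (\ref{ADistn}), and that $s\mapsto\tilde T_s^*\varphi$ is continuous at $s=0$ in a topology strong enough to pass to the limit in the integrand above. These properties are supplied by the hypoellipticity coming from (\ref{unifEllAs}) combined with the uniform jump-kernel bounds (\ref{BdsBigjumps}), (\ref{GMassumption}), (\ref{adjointAs}), the same ingredients underlying the $\varepsilon$-$L^p$ estimate of Lemma \ref{IntSobEst}; together they furnish the majorant needed for dominated convergence in the integral against $u$.
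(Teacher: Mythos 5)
Your approach via the discounted adjoint semigroup $\tilde T_s^*$ is genuinely different from the paper's, and you yourself flag exactly the point on which it hinges --- but you leave it unresolved, and that is a real gap rather than a cosmetic one. The argument requires, at minimum: (i) existence and regularity of a transition density $p_t(x,y)$ for the infinite-variation jump diffusion; (ii) that $\tilde T_s^*\varphi$ is smooth and has enough decay at infinity that the pairings $\langle u,\,\tilde T_t^*\varphi-\varphi\rangle$ and $\langle Au,\,\tilde T_s^*\varphi\rangle$ are well defined --- recall $u$ has only linear growth and $\tilde T_s^*\varphi$ is not compactly supported, so the distributional identity (\ref{ADistn}) does \emph{not} apply to it directly; you must extend that pairing from $\mathcal{D}$ to a suitable weighted class, and this needs decay estimates you have not proved; and (iii) that $s\mapsto\tilde T_s^*\varphi$ is continuous at $s=0$ in a topology strong enough to pass $t\downarrow 0$ under the time integral. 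None of these is established, and they are not routine: existence of transition densities, and derivative bounds for them, for diffusions driven by nontrivial infinite-variation L\'evy kernels is a research topic in its own right. Asserting that ``the same ingredients underlying Lemma~\ref{IntSobEst} furnish the majorant'' is hand-waving --- that lemma gives a local $\varepsilon$-$L^p$ estimate for $I$ acting on smooth functions, not heat-kernel bounds for the adjoint semigroup.

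The paper's proof avoids all of this by placing the mollification on $u$ rather than on the test side. With $u*\eta_n$ smooth, $A(u*\eta_n)$ has a classical pointwise meaning; the submartingale inequality, localized near a fixed $x_0$ by a stopping time $\tau_U$ so that bounded convergence applies as $t\downarrow 0$, yields the pointwise bound $A(u*\eta_n)(x)\le(f*\eta_n)(x)$ on a small ball $B_{x_0}(a/2)$. One then lets $n\to\infty$: by (\ref{ADistn}) together with strong $L^p_{\mathrm{loc}}$ convergence of $u*\eta_n$ and of its first derivatives, $\langle A(u*\eta_n),\varphi\rangle\to\langle Au,\varphi\rangle$, and a partition of unity over $U$ finishes. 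This requires no information about the transition semigroup beyond the pathwise SDE moment estimates already in hand. To salvage your version you would either need to prove (i)--(iii) under the standing hypotheses (\ref{BdsBigjumps}), (\ref{GMassumption}), (\ref{adjointAs}), (\ref{unifEllAs}), or switch to the paper's mollification scheme, which puts the regularity burden entirely on objects you can control.
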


\begin{proof}
This proof follows the approach taken in Proposition 2.5 in \cite{LM-2008}.  Without loss of generality we can assume $U$ is bounded.  Indeed, suppose $U$ is an unbounded open set.  We wish to show that for $\varphi\in \mathcal{D}(U)$, $\varphi\geq 0$ that $\langle f-Au,\varphi \rangle \geq 0$.  Since $\varphi\in C^{\infty}_{c}(U)$ there exists some bounded $U_{\text{bdd}}\subset U$ such that $\text{spt}(\varphi)\subset U_{\text{bdd}}$.  If it holds that $\langle f-Au, \phi \rangle \geq 0$ for all $\phi \in \mathcal{D}(U_{\text{bdd}})$, $\phi \geq 0$, then it is, indeed,  true that $\langle f-Au,\varphi \rangle \geq 0$.  Thus, we will assume below that $U$ is a bounded open set.  \\
\indent Let $X^{0}_{t}$ denote a solution of (\ref{sde}) with $X_{0}=0$.  Define the stopping time $\tau_{U}^{x}:=\inf\{t\geq 0: X^{0}_{t}+x \notin U\}$.  Fix $x_{0}\in U$ and define a stopping time as $\tau_{U}:=\inf\{t\geq 0: \exists y\in B_{x_{0}}(a) \ \text{such that} \ X^{0}_{t}+y \notin U\}$.  Choose $a>0$ such that $B_{x_{0}}(2a)\subset U$.  For every $(x,y)\in (B_{x_{0}}(a/2),B_{0}(a/2))$, we have $\tau_{U}\leq \tau_{U}^{x-y}$.  By the submartingale property,
\begin{equation}
\begin{split}
\mathbb{E}\Bigg[u(X^{0}_{t\wedge \tau_{U}}+x-y)e^{-r(t\wedge \tau_{U})}+\int_{0}^{t\wedge \tau_{U}}f(X_{s}^{0}+x-y)e^{-rs}\Bigg]\geq u(x-y).
\end{split}
\end{equation}
Letting $(\eta_{n})_{n=1}^{\infty}$ denote the standard regularizing sequence, we have
\begin{equation}
\begin{split}
&\int_{\mathbb{R}^{n}}\mathbb{E}[u(X^{0}_{t\wedge \tau_{U}}+x-y)e^{-r(t\wedge \tau_{U})}]\eta_{n}(y)\mathrm{d}y\geq \int_{\mathbb{R}^{n}}u(x-y)\eta_{n}(y)\mathrm{d}y\\
&-\int_{\mathbb{R}^{n}}\Bigg(\mathbb{E}\Bigg[ \int_{0}^{t\wedge \tau_{U}}f(X_{s}^{0}+x-y)e^{-rs}\mathrm{d}y\Bigg]\Bigg)\eta_{n}(y)\mathrm{d}y.
\end{split}
\end{equation}
Via Fubini's theorem, we find
\begin{equation}
\begin{split}
\mathbb{E}[u*\eta_{n}(X^{0}_{t\wedge \tau_{U}}+x)e^{-r(t\wedge \tau_{U})}]\geq u*\eta_{n}(x)-\int_{\mathbb{R}^{n}}\mathbb{E}\Bigg[\int_{0}^{t\wedge \tau_{U}}f(X_{s}^{0}+x-y)e^{-rs}\mathrm{d}s\Bigg]\eta_{n}(y)\mathrm{d}y.
\end{split}
\end{equation}
Then, for every $t>0$,
\begin{equation}
\begin{split}
\frac{1}{t}\left(\mathbb{E}[u*\eta_{n}(X^{0}_{t\wedge \tau_{U}}+x)e^{-r(t\wedge \tau_{U})}]-u*\eta_{n}(x)\right)\geq-\int_{\mathbb{R}^{n}}\mathbb{E}\Bigg[\frac{1}{t}\int_{0}^{t\wedge \tau_{U}}f(X_{s}^{0}+x-y)e^{-rs}\mathrm{d}s\Bigg]\eta_{n}(y)\mathrm{d}y,
\end{split}
\end{equation}
which implies
\begin{equation}
\begin{split}
\mathbb{E}\Bigg[\frac{1}{t}\int_{0}^{t\wedge \tau_{U}}A(u*\eta_{n})(X^{0}_{s}+x)\mathrm{d}s\Bigg]\leq \int_{\mathbb{R}^{n}}\mathbb{E}\Bigg[\frac{1}{t}\int_{0}^{t\wedge \tau_{U}}f(X_{s}^{0}+x-y)e^{-rs}\mathrm{d}s\Bigg]\eta_{n}(y)\mathrm{d}y.
\end{split}
\end{equation}
Since $U$ is bounded, the bounded convergence theorem yields
\begin{equation}
\begin{split}
\mathbb{E}\Bigg[\lim_{t\downarrow 0}\frac{1}{t}\int_{0}^{t\wedge \tau_{U}}A(u*\eta_{n})(X^{0}_{s}+x)\mathrm{d}s\Bigg]&\leq \int_{\mathbb{R}^{n}}\mathbb{E}\Bigg[\lim_{t\downarrow 0}\frac{1}{t}\int_{0}^{t\wedge \tau_{U}}f(X_{s}^{0}+x-y)e^{-rs}\mathrm{d}s\Bigg]\eta_{n}(y)\mathrm{d}y, \\
\mathbb{E}\Bigg[\lim_{t\downarrow 0}\frac{1}{t}\int_{0}^{t}\mathbb{1}_{\{ \tau_{U}\geq s\}}A(u*\eta_{n})(X^{0}_{s}+x)\mathrm{d}s\Bigg]&\leq \int_{\mathbb{R}^{n}}\mathbb{E}\Bigg[\lim_{t\downarrow 0}\frac{1}{t}\int_{0}^{t}\mathbb{1}_{\{ \tau_{U}\geq s\}}f(X_{s}^{0}+x-y)e^{-rs}\mathrm{d}s\Bigg]\eta_{n}(y)\mathrm{d}y, 
\end{split}
\end{equation}
The mean value theorem now implies that $A(u*\eta_{n})(x)\leq (f*\eta_{n})(x)$ for all $x\in B_{x_{0}}(a/2)$.  Notice that for the value function $u$, we know $u*\eta_{n} \rightarrow u$ in $L^{p}(B_{x_{0}}(a/2))$ and $(\partial_{x_{i}}u)*\eta_{n}\to \partial_{x_{i}}u$ in $L^{p}(B_{x_{0}}(a/2))$ and for any $1<p<\infty$.  Using (\ref{ADistn}), it is straightforward to show that $\langle A(u*\eta_{n}),\varphi \rangle $ converges to $\langle Au, \varphi \rangle$ as $n\to \infty$ in $\mathcal{D}'(B_{x_{0}}(a/2))$.  Combining this fact with  $A(u*\eta_{n})(x)\leq (f*\eta_{n})(x)$ for all $x\in B_{x_{0}}(a/2)$ allows us to conclude that $Au(x)\leq f(x)$ in $\mathcal{D}'(B_{x_{0}}(a/2))$.  Since $x_{0}\in U$ was arbitrary, a partition of unity argument now shows $Au(x)\leq f(x)$ in $\mathcal{D}'(U)$.


\end{proof}


\indent Upon knowing that $Au\leq f$ in $\mathcal{D}'(\mathbb{R}^{n})$ from Proposition \ref{MartingaleToDistn}, our next goal is to show that the distribution $Au$ is actually a function with $Au\in B_{2}(\mathbb{R}^{n})$.  This property not only describes the behavior of $Au$ at infinity but also would mean $Au\in L^{\infty}(\mathcal{O})$ for any bounded open set $\mathcal{O}$.  In turn, an application of Proposition \ref{AlmostLocal} would complete the regularity argument by allowing us to conclude $u\in W^{2,p}_{\mathrm{loc}}(\mathbb{R}^{n})$.  Below, we show $A(\mathcal{M}u)\geq -C(1+\abs{x}^{2})$ which combined with $Au\leq f$ in $\mathcal{D}'(\mathbb{R}^{n})$, $u\leq \mathcal{M}u$ in $\mathbb{R}^{n}$ and $Au=f$ in $\mathcal{D}'(\{u<\mathcal{M}u\})$ implies that $Au \in B_{2}(\mathbb{R}^{n})$.  
\subsubsection{Semi-concavity of $u$ and $\mathcal{M}u$}
The property $A(\mathcal{M}u)\geq -C(1+\abs{x}^{2})$ in $\mathcal{D}'(\mathbb{R}^{n})$ follows from the semi-concavity property of $u$ and $\mathcal{M}u$.  
\begin{defn}
\label{defnsemiconcave}
A continuous function $h$ from $\mathbb{R}^{n}$ to $\mathbb{R}^{n}$ is called \emph{semi-concave on} $\mathbb{R}^{n}$ if for every ball $B_{r}(0)$, $r>0$ there exists a constant $C_{r}>0$ such that $x\mapsto h(x)-C_{r}\abs{x}^{2}$ is concave on $B_{r}(0)$, i.e., for every $\abs{x}<r$, $\abs{y}<r$, we have
\begin{equation}
\begin{split}
\theta h(x)+(1-\theta)h(y)-h(\theta x+(1-\theta)y)\leq C_{r}\theta(1-\theta)\abs{x-y}^{2},
\end{split}
\end{equation}
for any $\theta\in [0,1]$.  If $h$ is continuous, this is equivalent to the condition
\begin{equation}
\begin{split}
h(x+z)-2h(x)+h(x-z)\leq C_{r}\abs{z}^{2},
\end{split}
\end{equation}
for all $z$ sufficiently small.  Equivalently, for any unit vector $\chi\in\mathbb{R}^{n}$ and constant $C>0$, we have
\begin{equation}
\frac{\partial^{2} h}{\partial \chi^{2}}\leq C, \ \text{in}\ \mathcal{D}'(\mathbb{R}^{n}).
\end{equation}  
\end{defn}      
\indent As observed in Section 4.2 in \cite{M-1987}  and Section 6 in \cite{DGW-2009}, in order to show the semi-concavity of $\mathcal{M}u$ on $\mathbb{R}^{n}$, it suffices to show the semi-concavity of $u$.  Indeed, for fixed $x\in \mathbb{R}^{n}$,
\begin{equation}
\begin{split}
\mathcal{M}u(x+z)-2\mathcal{M}u(x)+\mathcal{M}u(x-z)\leq u(y+z)-2u(y)+u(y-z),\\
\end{split}
\end{equation}
where $y:=x+\xi$ and $\xi\in \mathbb{R}^{n}$ is the limit of a convergent subsequence of a minimizing sequence $(\xi_{k})_{k=1}^{\infty}$ such that $u(x+\xi_{k})+B(\xi_{k})\rightarrow \mathcal{M}u(x)$.  The following lemma which, for instance, appears as Proposition 5.9 in Section 5.1.2 \cite{M-2008} assists in showing $u$ is semi-concave.
\begin{lem}
\label{estsemiconcave}
Let $X_{t}, X'_{t}, Z_{t}$ be three solutions of (\ref{sde}) for $t\geq 0$ with initial values $x,x',z$.  If $\alpha\geq \kappa$, as defined in (\ref{kappas}), then for $\psi_{\theta}(x,x',z):=\theta^2(1-\theta)^{2}\abs{x-x'}^{4}+\abs{\theta x+(1-\theta)x'-z}^{2}$ and under the assumptions (\ref{coeffAs}), and (\ref{coeffDerLip}), we have
\begin{equation}
\begin{split}
&\mathbb{E}\Bigg[ (\alpha-\kappa)\int_{0}^{t}\psi_{\theta}(X_{s}, X'_{s}, Z_{s})e^{-\alpha s}\mathrm{d}s+\psi_{\theta}(X_{t},X'_{t},Z_{t})e^{-\alpha t}\Bigg] \\
&\leq \psi_{\theta}(x,x',z), \ \text{for} \ t\geq 0.
\end{split}
\end{equation}
Moreover, there exists a constant $C>0$, depending on the bounds of $\sigma$, $j$ through (\ref{coeffAs}), and (\ref{coeffDerLip}), such that
\begin{equation}
\begin{split}
\mathbb{E}\Bigg[\underset{0\leq s \leq t}\sup \psi_{\theta}(X_{s},X'_{s},Z_{s})e^{-\alpha s}\Bigg]
\leq C\left(1+\frac{1}{\alpha-\kappa}\right)\psi_{\theta}(x,x',z), \ \text{for} \ t\geq 0.
\end{split}
\end{equation}
\end{lem}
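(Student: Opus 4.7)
The plan is to apply Itô's formula to the semimartingale $s \mapsto \psi_\theta(X_s, X'_s, Z_s)e^{-\alpha s}$ on $[0,t]$, where $(X, X', Z)$ is the triple of solutions of (\ref{sde}) driven by (possibly) the same Brownian motion $W$ and Poisson random measure $N$. Writing $\mathcal{A}$ for the infinitesimal generator of the triple, Itô's formula gives
\begin{equation*}
\psi_\theta(X_t, X'_t, Z_t)e^{-\alpha t} = \psi_\theta(x,x',z) + \int_0^t e^{-\alpha s}\bigl[(\mathcal{A}\psi_\theta)(X_s, X'_s, Z_s) - \alpha\,\psi_\theta(X_s, X'_s, Z_s)\bigr] ds + M_t,
\end{equation*}
where $M_t$ is a local martingale collecting the Brownian and compensated Poisson integrals. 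The whole proof rests on the key pointwise inequality $(\mathcal{A}\psi_\theta)(x, x', z) \leq \kappa\, \psi_\theta(x, x', z)$, which I would verify by separately treating the drift, diffusion and jump contributions.

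For the drift term, apply $\mathcal{L}_D$ to the quadratic piece $|\theta x + (1-\theta)x' - z|^2$ and split
\begin{equation*}
\theta\tilde b(x) + (1-\theta)\tilde b(x') - \tilde b(z) = \bigl[\tilde b(\theta x + (1-\theta)x') - \tilde b(z)\bigr] + \bigl[\theta\tilde b(x) + (1-\theta)\tilde b(x') - \tilde b(\theta x + (1-\theta)x')\bigr].
\end{equation*}
The first bracket is controlled by $C_{\tilde b}|\theta x + (1-\theta)x' - z|$ from (\ref{coeffAs}), and the second by $C\theta(1-\theta)|x-x'|^2$ using Lipschitz continuity of $\tilde b'$ from (\ref{coeffDerLip}). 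Pairing these against the gradient $2(\theta x + (1-\theta)x' - z)$ and using $2ab \leq a^2 + b^2$ with $a = \theta(1-\theta)|x-x'|^2$, $b = |\theta x + (1-\theta)x' - z|$, one exactly generates both summands of $\psi_\theta$. An identical splitting handles $\sigma$ and $j$: the second-order terms produce quadratic forms in the increments $\sigma(X_s) - \sigma(X'_s)$ etc., which under (\ref{coeffAs}) and (\ref{coeffDerLip}) reduce to $|x-x'|^4$, which is absorbed by the $\theta^2(1-\theta)^2|x-x'|^4$ piece of $\psi_\theta$. The integro-differential operator is treated by Taylor expansion to second order in the jump amplitude, with the leading remainder integrated against $\nu$ via (\ref{BdsBigjumps}); this is precisely where the fourth-order component of $\psi_\theta$ is needed.

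Once this generator estimate is in hand, a standard localization by stopping times $\tau_R = \inf\{s : |X_s| + |X'_s| + |Z_s| \geq R\}$ makes $M$ a true martingale on $[0, t \wedge \tau_R]$. Taking expectation, letting $R \to \infty$ by Fatou and using $\alpha \geq \kappa$ yield the first claimed inequality
\begin{equation*}
\mathbb{E}\bigl[(\alpha - \kappa)\textstyle\int_0^t \psi_\theta(X_s, X'_s, Z_s)e^{-\alpha s}\,ds + \psi_\theta(X_t, X'_t, Z_t)e^{-\alpha t}\bigr] \leq \psi_\theta(x, x', z).
\end{equation*}

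For the supremum estimate, the Burkholder--Davis--Gundy inequality bounds $\mathbb{E}\sup_{0 \leq s \leq t}|M_s|$ by $C\mathbb{E}[\langle M\rangle_t^{1/2}]$. The quadratic variation of $M$ is, after the same Lipschitz manipulations on $\sigma$ and $j$, dominated termwise by $\int_0^t e^{-2\alpha s}\psi_\theta^2(X_s, X'_s, Z_s)\,ds$, so the BDG bound is controlled via $\mathbb{E}\bigl[\sup_s \psi_\theta e^{-\alpha s}\bigr]^{1/2}\bigl(\int_0^t \mathbb{E}[\psi_\theta e^{-\alpha s}]\,ds\bigr)^{1/2}$; an absorbing argument together with the first inequality yields the factor $C(1 + 1/(\alpha-\kappa))$. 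The main obstacle throughout is the careful bookkeeping in the generator estimate, since it is only when one simultaneously tracks $|x-x'|^4$ and $|\theta x + (1-\theta)x' - z|^2$ (rather than either alone) that both the second-order Taylor remainders of $\tilde b, \sigma, j$ \emph{and} the Brownian/jump quadratic variations close up on a multiple of $\psi_\theta$; any asymmetric choice of test function would leak an uncontrollable $|x-x'|^2$ cross term.
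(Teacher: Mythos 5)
Your proposal matches the paper's approach: both apply It\^o's formula to $\psi_\theta(X_s,X'_s,Z_s)e^{-\alpha s}$, establish the pointwise drift bound $a_t\leq\kappa\,\psi_\theta$ with $\kappa$ as in (\ref{kappas}), take expectations (with $\alpha\geq\kappa$) for the first inequality, and then deduce the supremum estimate from the BDG-type bounds (\ref{stocheq1})--(\ref{stocheq2}) together with the absorbing argument of Lemma~\ref{moment-est}. The only technical refinement you omit is the paper's regularization $\psi_{\lambda,\theta}:=\lambda+\psi_\theta$, which keeps the gradient-to-value ratios bounded when estimating $\sum_k|b^k_t|^2 + \int|c(t,z)|^2\,\nu(\mathrm{d}z)\leq C|\psi_{\lambda,\theta}|^2$, avoiding degeneracy where $\psi_\theta$ vanishes.
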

\begin{proof}
The proof follows analogously to the proof of Lemma \ref{moment-est}.    Indeed, we consider $\psi_{\lambda,\theta}(x,x',z):=\lambda+\psi_{\theta}(x,x',z)$ and apply It\'{o}'s formula to find
\begin{equation}
\begin{split}
\mathrm{d}\psi_{\lambda,\theta}(X_{t},X'_{t},Z_{t})=a_{t}\mathrm{d}t+\sum_{k=1}^{d}b^{k}_{t}\mathrm{d}W^{k}_{t}+\int_{\mathbb{R}^{}}c(t,z)\tilde{N}(\mathrm{d}t,\mathrm{d}z),
\end{split}
\end{equation}
with $a_{t}\leq \kappa \psi_{\lambda,\theta}(X_{t},X'_{t},Z_{t})$.  As in Lemma \ref{moment-est}, we also have
\begin{equation}
\begin{split}
\sum_{k=1}^{d}\abs{b^{k}_{t}}^{2}+\int_{\mathbb{R}^{l}}\abs{c(t,z)}^{2}\nu(\mathrm{d}z)\leq C\abs{\psi_{\lambda,\theta}(X_{t},X'_{t},Z_{t})}^{2}, 
\end{split}
\end{equation}  
for some constant $C>0$.  Proceeding as in Lemma \ref{moment-est} completes the proof. \\
\end{proof}

We will apply this estimate as follows in Proposition \ref{semiconcave-prpn} below.  Let $Y_{t}(x)$ denote the solution of $(\ref{sde})$ with initial condition $Y_{0}(x)=x$.  From Lemma (\ref{estsemiconcave}), we have 
\begin{equation}
\begin{split}
\mathbb{E}\Bigg[&(\alpha-\kappa)\int_{0}^{t}\abs{\theta Y_{s}(x)+(1-\theta)Y_{s}(x')-Y_{s}(\theta x+(1-\theta)x')}^{2}e^{-\alpha s}\mathrm{d}s\\
&+\abs{\theta Y_{t}(x) + (1-\theta)Y_{t}(x')-Y_{t}(\theta x+(1-\theta)x')}^{2}e^{-\alpha t}\Bigg] \leq \theta^{2}(1-\theta)^{2}\abs{x-x'}^{4},  
\end{split}
\end{equation}
and 
\begin{equation}
\label{supest-semiconcave}
\begin{split}
\mathbb{E}\Bigg[&\underset{0\leq s \leq t}\sup\abs{\theta Y_{s}(x)+(1-\theta)Y_{s}(x')-Y_{s}(\theta x+(1-\theta)x')}^{2}e^{-\alpha s}\Bigg ]\\
&\leq C\left(1+\frac{1}{\alpha-\kappa}\right)\theta^{2}(1-\theta)^{2}\abs{x-x'}^{4}.  
\end{split}
\end{equation}
The following proposition asserts the semi-concavity property of $u$.
\begin{prpn}
\label{semiconcave-prpn}
Assume (\ref{coeffAs}), (\ref{coeffDerLip}), (\ref{fLip}), (\ref{fsemiconcave}), and suppose $r$ is sufficiently large.  Then $u$ is semi-concave on $\mathbb{R}^{n}$.  
\end{prpn}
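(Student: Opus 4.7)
The plan is to run a synchronous coupling argument inspired by Lemma \ref{estsemiconcave}. Fix $R > 0$, points $x, x' \in B_R$, $\theta \in (0, 1)$, and set $y = \theta x + (1-\theta) x'$. Given $\varepsilon > 0$, I would choose an $\varepsilon$-optimal admissible control $V = (\tau_i, \xi_i)_{i \geq 1}$, adapted to the filtration of the driving pair $(W, N)$, for the starting point $y$, so that $J_y[V] \leq u(y) + \varepsilon$. Then apply this same $V$ to the three initial conditions $x, x', y$, producing controlled trajectories $X_t := X^V_t(x)$, $X'_t := X^V_t(x')$, $Z_t := X^V_t(y)$.

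The decisive algebraic observation is that each impulse $\xi_i$ adds simultaneously to all three trajectories, so the combinations $X_t - X'_t$ and $\theta X_t + (1-\theta) X'_t - Z_t$ are continuous across every $\tau_i$; equivalently, $\psi_\theta$ from Lemma \ref{estsemiconcave} is invariant under a common additive shift of its three arguments, so an It\^o expansion of $\psi_\theta(X_t, X'_t, Z_t)$ acquires no impulse contribution. Consequently Lemma \ref{estsemiconcave} applies \emph{verbatim} with $z = y$ (so that $\theta x + (1-\theta)x' - z = 0$) to yield, for $\alpha > \kappa$,
\begin{equation*}
\int_0^\infty e^{-\alpha t} \mathbb{E}\bigl[|\theta X_t + (1-\theta) X'_t - Z_t|^2\bigr] \, dt \leq \frac{1}{\alpha - \kappa} \theta^2 (1-\theta)^2 |x - x'|^4,
\end{equation*}
while the analogous It\^o argument applied to $|X_t - X'_t|^2$ (which likewise has no impulse jumps) gives $\mathbb{E}[|X_t - X'_t|^2 e^{-\alpha t}] \leq C |x - x'|^2$ for $\alpha$ large. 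Moreover, the transaction costs $\sum_i e^{-r\tau_i} B(\xi_i)$ are identical along the three trajectories, so they cancel in
\begin{equation*}
\theta J_x[V] + (1-\theta) J_{x'}[V] - J_y[V] = \mathbb{E}\left[\int_0^\infty e^{-rt} \bigl(\theta f(X_t) + (1-\theta) f(X'_t) - f(Z_t)\bigr) dt\right].
\end{equation*}

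To conclude, use $u(x) \leq J_x[V]$, $u(x') \leq J_{x'}[V]$, and the $\varepsilon$-optimality of $V$ at $y$, then split the running-cost difference as
\begin{equation*}
\bigl[\theta f(X_t) + (1-\theta) f(X'_t) - f(\theta X_t + (1-\theta) X'_t)\bigr] + \bigl[f(\theta X_t + (1-\theta) X'_t) - f(Z_t)\bigr].
\end{equation*}
The first bracket is dominated by $C_\rho \theta(1-\theta)|X_t - X'_t|^2$ through the semiconcavity (\ref{fsemiconcave}), and the second by $C_f |\theta X_t + (1-\theta) X'_t - Z_t|$ via the Lipschitz bound (\ref{fLip}). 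Integrating against $e^{-rt}$, applying Cauchy--Schwarz to the linear term, combining with the moment estimates above (which requires $r$ large relative to $\alpha, \kappa$), and finally letting $\varepsilon \to 0$ produces
\begin{equation*}
\theta u(x) + (1-\theta) u(x') - u(y) \leq C_R \theta(1-\theta) |x - x'|^2.
\end{equation*}

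The principal technical obstacle is that the semiconcavity constant $C_\rho$ in (\ref{fsemiconcave}) is only local while the coupled trajectories $X_t, X'_t$ are not confined to any ball. I expect to dispatch this by truncation: on the complement $\{|X_t| \vee |X'_t| > \rho\}$ one replaces the semiconcavity bound by the weaker Lipschitz estimate $|\theta f(a) + (1-\theta) f(b) - f(\theta a + (1-\theta) b)| \leq 2 C_f \theta(1-\theta) |a - b|$, and absorbs the resulting tail via Chebyshev using the uniform $L^2$-moment bound on $X_t, X'_t$ starting in $B_R$, together with the rapidly decaying factor $e^{-rt}$ with $r$ large. A minor, routine point is that $V$ must be chosen $(W, N)$-adapted so that the same control can be driven from any initial condition; this is standard in the Markovian setting and does not restrict the infimum defining $u$.
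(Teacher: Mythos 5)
Your proposal follows essentially the same route as the paper: synchronous coupling under a common control (so the impulses, and hence the transaction costs, cancel in $X_t - X'_t$ and $\theta X_t + (1-\theta)X'_t - Z_t$), the decomposition of the running-cost difference into a semiconcavity bracket plus a Lipschitz bracket, and the invocation of Lemma~\ref{estsemiconcave} together with the $L^2$-coupling estimate. Your choice of an $\varepsilon$-optimal control at the midpoint $y$ is exactly the ``infimum of semi-concave functions is semi-concave'' step in the paper, just written out; and your remark that $\psi_\theta$ is invariant under a common additive shift is the correct justification, left implicit in the paper, that Lemma~\ref{estsemiconcave} (stated for the uncontrolled SDE) still applies to the controlled trajectories.

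The one point that does not close as written is the truncation you offer for the locality of $C_r$ in (\ref{fsemiconcave}). On the tail event $E_t := \{|X_t|\vee|X'_t|>\rho\}$, the Lipschitz substitute yields $2C_f\theta(1-\theta)|X_t - X'_t|$, and Cauchy--Schwarz with Chebyshev gives
\begin{equation*}
\mathbb{E}\bigl[\mathbb{1}_{E_t}|X_t - X'_t|\bigr] \le \mathbb{P}(E_t)^{1/2}\,\mathbb{E}\bigl[|X_t-X'_t|^2\bigr]^{1/2} \le C(\rho,t,R)\,|x-x'|,
\end{equation*}
which after the time integration is of order $|x-x'|$, not $|x-x'|^2$; for any fixed $\rho$ this cannot be dominated by $C_R\theta(1-\theta)|x-x'|^2$ uniformly as $|x-x'|\to 0$. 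Letting $\rho$ shrink with $|x-x'|$ merely reintroduces an uncontrolled $C_\rho$ in the main term, since (\ref{fsemiconcave}) bounds neither the growth of $C_r$ nor supplies the higher moments of $X_t$ one would need to trade against it. To be fair, the paper's own proof also glosses over this point -- the constant in the first inequality of its chain is silently treated as uniform in the position of the trajectories -- so you have correctly flagged a genuine loose end, but your proposed patch would need either a quantitative growth hypothesis on $C_r$ matched by moment estimates, or simply the stronger assumption that the semiconcavity in (\ref{fsemiconcave}) holds with a single constant.
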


\begin{proof}
Fix an admissible control $V$.  The value function $u(x)$ will be semi-concave if $J_{x}[V]$ is semi-concave since the infimum of semi-concave functions is semi-concave.  Appealing to Definition \ref{defnsemiconcave}, we show
 \begin{equation}
 \begin{split}
 \theta J_{x}[V]+(1-\theta)J_{x'}[V]-J_{\theta x + (1-\theta)x'}[V] \leq C \theta(1-\theta)\abs{x-x'}^{2}
 \end{split}
 \end{equation}
Define
\begin{equation}
\label{kappas}
\begin{split}
\kappa&:= \underset{x,x',y,\theta} \sup\{2\kappa_{\tilde{b}}+\kappa_{\sigma}+\kappa_{j}\}, \ \text{with} \\
\end{split}
\end{equation}

\begin{equation*}
\begin{split}
\kappa_{\tilde{b}}&:=\sum_{i}2\theta^{2}(1-\theta)^{2}\abs{x-x'}^{2}(x_{i}-x'_{i})[\tilde{b}_{i}(x)-\tilde{b}_{i}(x')]\\
&+\sum_{i}(\theta x_{i}+(1-\theta)x_{i}'-y_{i})[\theta \tilde{b}_{i}(x)+(1+\theta)\tilde{b}_{i}(x')-\tilde{b}_{i}(y)], 
\end{split}
\end{equation*}

\begin{equation*}
\begin{split}
\kappa_{\sigma}&:=\theta^{2}(1-\theta)^{2}\Bigg [\sum_{h,k}2\abs{x-x'}^{2}+4(x_{h}-x'_{h})^{2}(\sigma_{hk}(x)-\sigma_{hk}(x'))^{2}\\
&+\sum_{i\neq j, k}4(x_{i}-x_{i}')(x_{j}-x_{j}')(\sigma_{ik}(x)-\sigma_{ik}(x'))(\sigma_{jk}(x)-\sigma_{jk}(x'))\Bigg]\\
&+\sum_{i,k}[\theta\sigma_{ik}(x)+(1-\theta)\sigma_{ik}(x')-\sigma_{ik}(y)]^{2}, 
\end{split}
\end{equation*}

\begin{equation*}
\begin{split}
\kappa_{j}&:=\int_{\mathbb{R}^{l}}\Big[\abs{x-x'+j(x,z)-j(x',z)}^{4}-\abs{x-x'}^{4}\\
& -\sum_{i}4\abs{x-x'}(x_{i}-x_{i}')\times (j_{i}(x,z)-j_{i}(x',z)\Big]\nu(\mathrm{d}z)\\
&+\int_{\mathbb{R}^{l}}\Big[\abs{\theta x+(1-\theta)x'-y+(\theta j(x,z)+(1-\theta)j(x',z)-j(y,z))}^{2}\\
&-\abs{\theta x+(1-\theta)x'-y}^{2}\\
&-\sum_{i}2(\theta x_{i}+(1-\theta)x_{i}'-y_{i})\times(\theta j_{i}(x,z)+(1-\theta)j_{i}(x',z)-j_{i}(y,z))\Big]\nu(\mathrm{d}z),
\end{split}
\end{equation*}
where $x,x',z\in \mathbb{R}^{n}$, $\theta\in [0,1]$ and $\beta\leq \kappa<\infty$ due to (\ref{coeffAs}), (\ref{coeffDerLip}) (see Section 5.2.1 in \cite{M-2008} for a similar discussion).  We have for $\alpha\geq \kappa\geq \beta$,
\begin{equation}
\begin{split}
&\theta J_{x}[V]+(1-\theta)J_{x'}[V]-J_{\theta x + (1-\theta)x'}[V]\\
&=\mathbb{E}\Bigg[\int_{0}^{\infty}[\theta f(Y_{t}(x))+(1-\theta)f(Y_{t}(x'))-f(Y_{t}(\theta x +(1-\theta)x'))]e^{-r t}\mathrm{d}t\Bigg]\\
&=\mathbb{E}\Bigg[\int_{0}^{\infty}[\theta f(Y_{t}(x))+(1-\theta)f(Y_{t}(x'))-f(\theta Y_{t}(x)+(1-\theta)Y_{t}(x'))\\
& \quad+f(\theta Y_{t}(x)+(1-\theta)Y_{t}(x'))-f(Y_{t}(\theta x +(1-\theta)x'))]e^{-r t}\mathrm{d}t\Bigg]\\
&\leq C\theta(1-\theta)\int_{0}^{\infty}e^{-r t}\mathbb{E}[\abs{Y_{t}(x)-Y_{t}(x')}^{2}]\mathrm{d}t\\
&\quad +C_{f}\int_{0}^{\infty}e^{-r t}\mathbb{E}[\abs{\theta Y_{t}(x)+(1-\theta)Y_{t}(x')-Y_{t}(\theta x+(1-\theta)x')}]\mathrm{d}t\\
&\leq C\theta(1-\theta)\abs{x-x'}^{2}\int_{0}^{\infty}e^{-(r-\alpha)t}\mathrm{d}t\\
& \quad C_{f}C^{1/2}\left(1+\frac{1}{\alpha-\kappa}\right)^{1/2}\theta(1-\theta)\abs{x-x'}^{2}\int_{0}^{\infty}e^{-(r-\alpha) t}\mathrm{d}t\\
&\leq C\theta(1-\theta)\abs{x-x'}^{2}.
\end{split}
\end{equation}
The first inequality follows using semi-concavity and Lipschitz continuity of $f$.  The second inequality follows using a standard estimate for the difference of solutions for (\ref{sde}) (c.f. Theorem 5.6 in \cite{M-2008}) and (\ref{supest-semiconcave}).    
\end{proof}
\subsubsection{$u\in W^{2,p}_{\mathrm{loc}}(\mathbb{R}^{n})$}
\label{W2plocRn} Using the semi-concavity property of $\mathcal{M}u$ on $\mathbb{R}^{n}$, the following mollification argument shows that $A(\mathcal{M}u)\geq -C(1+\abs{x}^{2})$ in $\mathcal{D}'(\mathbb{R}^{n})$ for some constant $C>0$.  With $A:=(-\mathcal{L}_{D}-I+r)$ as in (\ref{Aoperator}),
Since $\mathcal{M}u$ is semi-concave on $\mathbb{R}^{n}$, we know
\begin{equation}
\begin{split}
\mathcal{M}u(x+\rho\chi)+\mathcal{M}u(x-\rho\chi)-2\mathcal{M}u(x)\leq K\rho^{2}, \ x\in \mathbb{R}^{n},
\end{split}
\end{equation}
for any $\rho>0$ and unit vector $\chi\in \mathbb{R}^{n}$ and non-negative constant $K$.  Below, $C$ denotes a generic constant independent of $\varepsilon$.  Let $g=\mathcal{M}u$ and denote $g^{\varepsilon}$ its mollification on $\mathbb{R}^{n}$.  We first show that $A(g^{\varepsilon}(x))\geq -C(1+\abs{x}^{2})$ for $C$ independent of $\varepsilon$.  We proceed by estimating each term in $A(g^{\varepsilon})$.  For $x\in \mathbb{R}^{n}$, $\rho>0$ and unit vector $\chi\in \mathbb{R}^{n}$, 
\begin{equation}
\begin{split}
&\frac{1}{\rho^{2}}\left(g^{\varepsilon}(x+\rho\chi)+g^{\varepsilon}(x-\rho\chi)-2g^{\varepsilon}(x)\right)\\
& =\frac{1}{\rho^{2}}\int_{B_{\varepsilon}(0)}\left(g(x-z+\rho\chi)+g(x-z-\rho\chi)-2g(x-z)\right)\eta^{\varepsilon}(z)\mathrm{d}z\\
& \leq K\int_{B_{\varepsilon}(0)}\eta^{\varepsilon}(z)\mathrm{d}z.
\end{split}
\end{equation}
Sending $\rho\rightarrow 0$, yields $\chi^{T}\nabla^{2}g^{\varepsilon}(x)\chi\leq K$.  Using this, we have
\begin{equation}
\begin{split}
Tr[\sigma(x)\sigma(x)^{T}\nabla^{2}g^{\varepsilon}(x)]&=\sum_{i=1}^{n}\sigma_{i}^{T}(x)\nabla^{2}g^{\varepsilon}(x)\sigma_{i}(x)\\
&\leq K\sum _{i,j=1}^{n}\abs{\sigma_{ij}(x)}^{2}\\
&\leq C(1+\abs{x}^{2}).
\end{split}
\end{equation}
Using Lipschitz continuity of $\tilde{b},g$, we know 
\begin{equation}
\begin{split}
\abs{\tilde{b}(x)\cdot \nabla g^{\varepsilon}(x)}\leq \abs{\tilde{b}(x)}\abs{\nabla g^{\varepsilon}(x)} \leq C(1+\abs{x})nC_{\mathcal{M}u}&=C(n)(1+\abs{x}),\\
&\leq C(n)(2+\abs{x}^{2})\\
&\leq C(1+\abs{x}^{2}),
\end{split}
\end{equation}
where $C_{\mathcal{M}u}$ is the Lipschitz constant for $\mathcal{M}u$, and $C(n)$ is a constant depending on the dimension $n$.  Next,
\begin{equation}
\begin{split}
\abs{g^{\varepsilon}(x)-g(x)}&\leq \int_{B_{\varepsilon}(0)}\abs{g(x-z)-g(x)}\eta^{\varepsilon}(z)\mathrm{d}z\\
&\leq C_{\mathcal{M}u}\int_{B_{\varepsilon}(0)}\abs{z}\eta^{\varepsilon}(z)\mathrm{d}z\\
&\leq \varepsilon C_{\mathcal{M}u}.
\end{split}
\end{equation}
Then, for all $\varepsilon\in \left(0,\frac{1}{C_{\mathcal{M}u}}\right)$, we have 
\begin{equation}
\begin{split}
\abs{g^{\varepsilon}(x)}\leq \abs{g(x)}+1 \leq C(1+\abs{x})\leq C(1+\abs{x}^{2}).
\end{split}
\end{equation}
With regard to the integro term, we have
\begin{equation}
\begin{split}
&\abs{\int_{\mathbb{R}^{n}}[g^{\varepsilon}(x+j(x,z))-g^{\varepsilon}(x)-\sum_{i=1}^{n}j_{i}(x,z)\partial_{x_{i}}g^{\varepsilon}(x)]\nu(\mathrm{d}z)}\\
&\leq \int_{\mathbb{R}^{n}}\left(\int_{0}^{1}(1-\theta)\abs{j(x,z)^{T}\cdot \nabla^{2}g^{\varepsilon}(x+\theta j(x,z))\cdot j(x,z)}\mathrm{d}\theta\right)\nu(\mathrm{d}z)\\
&\leq \int_{\mathbb{R}^{n}}\frac{K}{2}\abs{j(x,z)}^{2}\nu(\mathrm{d}z)\\
&\leq C(1+\abs{x}^{2}),
\end{split}
\end{equation}  
Gathering these estimates, we have for all $\varepsilon\in \left(0,\frac{1}{C_{\mathcal{M}u}}\right)$,
\begin{equation}
\begin{split}
A(g^{\varepsilon}(x))&=-\frac{1}{2}Tr[\sigma(x)\sigma(x)^{T}\nabla^{2}g^{\varepsilon}(x)]-\tilde{b}(x)\cdot\nabla g^{\varepsilon}(x)+rg^{\epsilon}(x)\\
& \quad -\int_{\mathbb{R}^{n}}[g^{\varepsilon}(x+j(x,z))-g^{\varepsilon}(x)-\sum_{i=1}^{n}j_{i}(x,z)\partial_{x_{i}}g^{\varepsilon}(x)]\nu(\mathrm{d}z)
\\
&\geq -C(1+\abs{x}^{2}),
\end{split}
\end{equation}
where $C$ depends upon the dimension $n$ but is independent of $\varepsilon$.  Now, this pointwise estimate implies that $A(g^{\varepsilon})\geq -C(1+\abs{x}^{2})$ in $\mathcal{D}'(\mathbb{R}^{n})$.  Since $g^{\varepsilon}\rightarrow g$ in $L^{1}_{\mathrm{loc}}(\mathbb{R}^{n})$ and $g_{x_{i}}^{\varepsilon}\rightarrow g_{x_{i}}$ in $L^{1}_{\mathrm{loc}}(\mathbb{R}^{n})$ (recall, $g$ is Lipschitz continuous), we know from (\ref{ADistn}) that $\langle Ag^{\varepsilon},\varphi \rangle\rightarrow \langle Ag,\varphi \rangle$ for every $\varphi\in \mathcal{D}(\mathbb{R}^{n})$.  Thus, $A(\mathcal{M}u)\geq -C(1+\abs{x}^{2})$ in $\mathcal{D}'(\mathbb{R}^{n})$.\\
\indent At this point, we know 
\begin{equation}
\begin{split}
\begin{cases}
-C(1+\abs{x}^{2})\leq Au \leq f, \ \text{in} \ \mathcal{D}'(\{u=\mathcal{M}u\}),\\
Au=f, \ \text{in} \ \mathcal{D}'(\{u<\mathcal{M}u\}).\\
\end{cases}
\end{split}
\end{equation}
From the above inequality, one can easily conclude that $Au$ exists as a function on $\{u=\mathcal{M}u\}$.  One way to see this is to note that 
\begin{equation}
\label{AuAsFcn}
\begin{split}
\int_{\mathcal{O}}[f+C(1+\abs{x}^2)]\varphi \ \mathrm{d}x =\int_{\mathcal{O}}\varphi \ \mathrm{d}(\mu_{1}+\mu_{2}), \ \varphi\in \mathcal{D}(\mathcal{O}),
\end{split}
\end{equation}
for any bounded open set $\mathcal{O}\subset \{u=\mathcal{M}u\}$ and where $\mu_{1},\mu_{2}$ are measures corresponding to the positive distributions $f-Au$ and $Au+C(1+\abs{x}^{2})$ respectively. Since $\mu_{1}+\mu_{2}$ is a positive measure corresponding to a function, it is absolutely continuous with respect to the Lebesgue measure, i.e. $\mu_{1}+\mu_{2} \ll \ell$ (Lebesgue measure) on $\mathcal{O}$ which then implies $\mu_{1}, \mu_{2}\ll\ell$ on $\mathcal{O}$.  Now, by definition of $\mu_{1}$ and $\mu_{2}$, we observe that $Au$ is a function.  Hence, $Au$ exists as a function and satisfies $\abs{Au(x)}\leq C(1+\abs{x}^{2})$, i.e., $Au(x)\in B_{2}(\mathbb{R}^{n})$.  Knowing $Au(x)\in B_{2}(\mathbb{R}^{n})$ allows us to apply Proposition \ref{AlmostLocal} with $f=Au$ over any bounded open set $\mathcal{O}$.  Thus, we have $u\in W^{2,p}_{\mathrm{loc}}(\mathbb{R}^{n})$ for $p\in(1,\infty)$ as desired.

\section*{Acknowledgments}
E. Bayraktar is supported in part by the National Science Foundation under an applied mathematics research grant and a Career grant, DMS-0906257 and DMS-0955463, respectively, and in part by the Susan M. Smith Professorship. We also would like to thank Christopher Link for his REU work related to this subject.

\appendix

\renewcommand{\theequation}{A.\arabic{equation}}
\renewcommand{\thetheorem}{A.\arabic{theorem}}
\renewcommand{\thedefinition}{A.\arabic{definition}}
\renewcommand{\thelemma}{A.\arabic{lemma}}
\section{Proofs of some technical results}

\subsubsection*{Proof of Lemma~\ref{u*lip}}

Given an admissible control $V$ and two initial states $x_{1},x_{2}$, denote by $X^{i}_{t}$ the solution of (\ref{sde}).  Set $Y_{t}=X^{1}_{t}-X^{2}_{t}$ and apply It\^{o}'s formula with $\varphi(y,t)=\abs{y}^{2}e^{-\alpha t}$ to obtain
\begin{equation}\label{eq:dY}
\begin{split}
\mathrm{d}\varphi(Y_{t},t)=a_{t}\mathrm{d}t+\sum_{k=1}^{d}b^{k}_{t} \ \mathrm{d}W^{k}_{t}+\int_{\mathbb{R}^{l}}c(t,z)\tilde{N}(\mathrm{d}t,\mathrm{d}z), \ \text{where}
\end{split}
\end{equation}
\begin{equation*}
\begin{split}
a_{t}&:=\partial_{t}\varphi(Y_{t},t)+\sum_{i=1}^{n}[\tilde{b}_{i}(X^{1}_{t})-\tilde{b}_{i}(X^{2}_{t})]\partial_{i}\varphi(Y_{t},t)\\
&+\frac{1}{2}\sum_{i,j=1}^{n}\Bigg(\sum_{k=1}^{d}[\sigma_{ik}(X^{1}_{t})-\sigma_{ik}(X^{2}_{t})][\sigma_{jk}(X^{1}_{t})-\sigma_{jk}(X^{2}_{t})]\Bigg)\partial^{2}_{ij}\varphi(Y_{t},t)\\
&+\int_{\mathbb{R}^{l}}[\varphi(Y_{t}+j(X^{1}_{t},z)-j(X^{2}_{t},z),t)-\varphi(Y_{t},t)\\
&-\sum_{i=1}^{n}[j_{i}(X^{1}_{t},z)-j_{i}(X^{2}_{t},z)]\partial_{i}\varphi(Y_{t},t)\nu(\mathrm{d}z),\\
b^{k}_{t}&:=\sum_{i=1}^{n}(\sigma_{ik}(X^{1}_{t})-\sigma_{ik}(X^{2}_{t}))\partial_{i}\varphi(Y_{t},t),\\
c(t,z)&:= \varphi(Y_{t}+j(X^{1}_{t},z)-j^{\epsilon}(X^{2}_{t},z),t)-\varphi(Y_{t},t),
\end{split}
\end{equation*}
and, 
\begin{equation}\label{eq:phider}
\begin{split}
\partial_{t}\varphi(y,t)&=-\alpha \varphi(y,t),
\;\partial_{i}\varphi(y,t)=2y_{i}\abs{y}^{-2}\varphi(y,t)=2y_{i}e^{-\alpha t},
\;\partial^{2}_{ij}\varphi(y,t)=2\delta_{ij}\abs{y}^{-2}\varphi(y,t),
\end{split}
\end{equation}
where $\delta_{ij}=1$ if $i=j$ and $0$ otherwise.  Define
\begin{equation}
\label{betas}
\begin{split}
\beta&:= \underset{x,x'\in \mathbb{R}^{n}} \sup\{2\beta_{\tilde{b}}+\beta_{\sigma}+\beta_{j}\}, \ \text{with} \\
\beta_{\tilde{b}}&:=\sum_{i}\frac{(x_{i}-x'_{i})[\tilde{b}_{i}(x)-\tilde{b}_{i}(x')]}{\abs{x-x'}^{2}}, \\
\beta_{\sigma}&:=\sum_{i,k}\frac{[\sigma_{ik}(x)-\sigma_{ik}(x')]^{2}}{\abs{x-x'}^{2}},\\ 
\beta_{j}&:=\int_{\mathbb{R}^{l}}\Big [ \abs{x-x'+j(x,z)-j(x',z)}^{2}-\abs{x-x'}^{2}\\
& \quad \quad -\sum_{i}2(x_{i}-x'_{i})[j_{i}(x,z)-j_{i}(x',z)]\Big]\abs{x-x'}^{-2}\nu(\mathrm{d}z),
\end{split}
\end{equation}
where $\beta<\infty$ due to (\ref{coeffAs}). Using (\ref{betas}) and taking $\alpha>\beta$, we find
\begin{equation*}
\begin{split}
\mathbb{E}[\varphi(Y_{t},t)]-(x_{1}-x_{2})^{2}&\leq (-\alpha+\beta)\int_{0}^{t}\mathbb{E}[\varphi(Y_{s},s)]\mathrm{d}s,
\end{split}
\end{equation*}
which implies $\mathbb{E}[\abs{X^{1}_{t}-X^{2}_{t}}]\leq e^{\beta t/2}\abs{x_{1}-x_{2}}$ by Gronwall's and Jensen's inequality.  Using (\ref{fLip}) and since $r$ is sufficiently large, we have $J_{x_{1}}[V]-J_{x_{2}}[V]\leq C_{u}\abs{x_{1}-x_{2}}$ with $C_{u}=C_{f}/(r-\beta/2)$.  Subsequently,
\begin{equation*}
\begin{split}
u(x_{1})\leq J_{x_{1}}[V]\leq J_{x_{2}}[V]+C_{u}\abs{x_{1}-x_{2}}.
\end{split}
\end{equation*}
Taking the infimum over all admissible controls with initial state $x_{2}$ yields the desired inequality.  Now, exchanging the roles of $x_{1}, x_{2}$ completes the proof. 

\subsubsection*{Proof of Lemma~\ref{IntSobEst}}

Let $\eta\in(0,1]$ be determined later.  Based on (\ref{BdsBigjumps}), we know
\begin{equation}
\begin{split}
\eta^{\gamma-1}\int_{\{\eta\leq j_{0}(z)<1\}}j_{0}(z)\nu(\mathrm{d}z)\leq \int_{\{j_{0}(z)<1\}}[j_{0}(z)]^{\gamma}\nu(\mathrm{d}z)\leq C_{0}, 
\end{split}
\end{equation}

\begin{equation}
\label{epsilonLpEst}
\begin{split}
\int_{\{j_{0}(z)<\eta\}}[j_{0}(z)]^{2}\nu(\mathrm{d}z)\leq \eta^{2-\gamma}r(\eta),
\end{split}
\end{equation}
where the module of integrability is given by 
\begin{equation}
\begin{split}
r(\eta)=\int_{\{j_{0}(z)<\eta\}}[j_{0}(z)]^{\gamma}\nu(\mathrm{d}z).
\end{split}
\end{equation}
Now, we write $I\varphi=I^{1}_{\eta}\varphi + I^{2}_{\eta}\varphi + I^{3}_{\eta}\varphi$ with 
\begin{equation}
\begin{split}
I^{1}_{\eta}\varphi&=\int_{\{j_{0}(z)\geq 1\}}\varphi(\cdot+j(\cdot,z))-\varphi(\cdot) \nu(\mathrm{d}z),\\
I^{2}_{\eta}\varphi &=\int_{\{\eta\leq j_{0}(z)<1\}}\varphi(\cdot+j(\cdot,z))-\varphi(\cdot)-\nabla\varphi(\cdot)\cdot j(\cdot,z)\nu(\mathrm{d}z),\\
I^{3}_{\eta}\varphi &=\int_{0}^{1}(1-\theta)\mathrm{d}\theta \int_{\{j_{0}(z)<\eta\}}j(\cdot,z)\cdot \nabla^{2}\varphi(\cdot+\theta j(\cdot,z))\cdot j(\cdot,z)\nu(\mathrm{d}z).
\end{split}
\end{equation}
Using Lipschitz continuity, we have $\abs{I^{1}_{\eta}\varphi}\leq C_{\varphi}\int_{\{j_{0}(z)\geq 1\}}j_{0}(z) \nu(\mathrm{d}z)\leq C_{\varphi}C_{0}$ and $\abs{I^{2}_{\eta}\varphi}\leq 2 C_{\varphi}\int_{\{\eta\leq j_{0}(z)<1\}}j_{0}(z)\nu(\mathrm{d}z)\leq 2C_{\varphi}C_{0}\eta^{1-\gamma}$.  For the last term, we have
\begin{equation}
\begin{split}
\abs{I^{3}_{\eta}\varphi}\leq \int_{0}^{1}\mathrm{d}\theta \int_{\{j_{0}(z)<\eta\}}\abs{j_{0}(z)}^{2}\abs{\nabla^{2}\varphi(\cdot+\theta j(\cdot,z))}\nu(\mathrm{d}z).
\end{split}
\end{equation}
Using this, we can estimate the $L^{p}$ norm as follows
\begin{equation*}
\begin{split}
\norm{I^{3}_{\eta}\varphi}_{L^{p}(\mathcal{O})}^{p}&\leq \int_{\mathcal{O}}\mathrm{d}x\int_{0}^{1}\mathrm{d}\theta\left(\int_{\{j_{0}(z)\leq\eta\}}\abs{j_{0}(z)}^{2}\abs{\nabla^{2}\varphi(x+\theta j(x,z))}\right)^{p} \nu(\mathrm{d}z)\\
& \leq \int_{\mathcal{O}}\mathrm{d}x\int_{0}^{1}\mathrm{d}\theta\left(\int_{\{j_{0}(z)\leq \eta\}}\abs{j_{0}(z)}^{2}\nu(\mathrm{d}z) \right)^{\frac{p}{q}}\\
&\quad \times \left(\int_{\{j_{0}(z)\leq \eta\}} \abs{j_{0}(z)}^{2}\abs{\nabla^{2}\varphi(x+\theta j(x,z))}^{p}\nu(\mathrm{d}z)\right)\\
&\leq (\eta^{2-\gamma}r(\eta))^{p}\norm{\nabla^{2}\varphi}^{p}_{L^{p}(\mathcal{O}^{\eta})},
\end{split}
\end{equation*}
Above, we use Fubini's theorem, Jensen's inequality, and the H\"{o}lder inequality with $1/p+1/q=1$.  Thus, $\norm{I^{3}_{\eta}\varphi}_{L^{p}(\mathcal{O})}\leq \eta^{2-\eta}r(\eta)\norm{\nabla^{2}\varphi}_{W^{2,p}(\mathcal{O}^{\eta})}$.  
From the above estimates, we find
\begin{equation}
\begin{split}
\norm{I\varphi}_{L^{p}(\mathcal{O})}\leq \eta^{2-\gamma}r(\eta)\norm{\nabla^{2}\varphi}_{L^{p}(\mathcal{O}^{\eta})}+C_{0}(1+2\eta^{1-\gamma})C_{\varphi}.
\end{split}
\end{equation}
Note that the module of integrability satisfies $r(\eta) \rightarrow 0$ as $\eta \rightarrow 0$.  Now choose $\eta$ small enough so that $\eta^{2-\gamma}r(\eta)<\varepsilon$ and $\eta<\varepsilon$.

\subsubsection*{Proof of Proposition~\ref{AlmostLocal}}
Let $C$ denote a generic constant throughout this proof.  Let $R\in (0,\mathrm{dist}(\mathcal{O}',\partial \mathcal{O}))$.  Consider $B_{R}(x_{0})$ (or simply $B_{R}$) for $x_{0}\in \mathcal{O}'$.  For a constant $0<\delta<1$ to be determined later, consider a smooth cut-off function $\zeta^{\delta}$ satisfying
\begin{equation}
\begin{split}
\begin{cases}
\zeta^{\delta}\equiv 1\ \text{on} \ B_{\frac{\delta}{2}R}, \zeta^{\delta}\equiv 0 \ \text{on} \ \mathbb{R}^{n}\setminus B_{\frac{3\delta}{4}R},\\
0\leq \zeta^{\delta} \leq 1. 
\end{cases}
\end{split}
\end{equation}
Moreover, $\zeta^{\delta}$ can be chosen to satisfy $\abs{\partial_{i}\zeta^{\delta}}\leq \frac{C}{\delta}$, $\abs{\partial^{2}_{ij}\zeta^{\delta}}\leq \frac{C}{\delta^{2}}$ for a constant $C$.
The function $w:=\zeta^{\delta} v$ satisfies
\begin{equation}
\begin{split}
\begin{cases}
(-\mathcal{L}_{D}+r)w=\zeta^{\delta} Iv(x)+\zeta^{\delta} f(x)+h(x) & x\in B_{\frac{3\delta}{4}R},\\
w(x)=0 & x\in \partial B_{\frac{3\delta}{4}R}, 
\end{cases}
\end{split}
\end{equation}
where $h(x):=-\sum_{i,j=1}^{n}a_{ij}(\partial^{2}_{ij}\zeta^{\delta}\cdot v+2 \partial_{i}\zeta^{\delta}\cdot \partial_{j}v)-\sum_{i=1}^{n}b_{i}\cdot\partial_{i}\zeta^{\delta} \cdot v$.  For this classical Dirichlet problem, there exists a constant $C$ independent of $w$ such that
\begin{equation}
\label{mainest}
\norm{w}_{W^{2,p}(B_{\frac{3\delta}{4}R})}\leq C\left(\norm{\zeta^{\delta} Iv}_{L^{p}(B_{\frac{3\delta}{4}R})}+\norm{\zeta^{\delta} f}_{L^{p}(B_{\frac{3\delta}{4}R})}+\norm{h}_{L^{p}(B_{\frac{3\delta}{4}R})}\right).
\end{equation}
We now estimate the terms on the right-hand side of (\ref{mainest}) individually.  For the first term, 
\begin{equation}
\begin{split}
\norm{\zeta^{\delta} Iv}_{L^{p}(B_{\frac{3\delta}{4}R})}\leq \norm{Iv}_{L^{p}(B_{\frac{3\delta}{4}R})}\leq \frac{\delta}{4}\norm{v}_{W^{2,p}(B_{\delta R})}+C\left(\frac{\delta}{4}\right)C_{v},
\end{split}
\end{equation}
where the first inequality follows from the choice of $\zeta^{\delta}$; the second inequality follows from Lemma \ref{IntSobEst} with $\varepsilon=\frac{\delta}{4}$.  Next, it is clear that $\norm{\zeta^{\delta} f}_{L^{p}(B_{\frac{3\delta}{4}R})}\leq \norm{f}_{L^{p}(B_{\frac{3\delta}{4}R})}$.  Now, we will estimate $\norm{h}_{L^{p}(B_{\frac{3\delta}{4}R})}$.  It follows from our choice of $\zeta^{\delta}$ that
\begin{equation*}
\begin{split}
\norm{\sum_{i,j=1}^{n}a_{ij}\partial^{2}_{ij}\zeta^{\delta}\cdot v}_{L^{p}(B_{\frac{3\delta}{4}R})}&\leq C\cdot \norm{v}_{L^{\infty}(B_{\frac{3\delta}{4}R})}\cdot \norm{\partial^{2}_{ij}\zeta^{\delta}}_{L^{p}(B_{\frac{3\delta}{4}R}\setminus B_{\frac{\delta}{2}R})}\\
& \leq C\cdot \norm{v}_{L^{\infty}(B_{\frac{3\delta}{4}R})}\cdot\delta^{\frac{n-2p}{p}}, \ \text{and}, \\
\norm{\sum_{i,j=1}^{n}2a_{ij}\partial_{i}\zeta^{\delta}\cdot \partial_{j}v}_{L^{p}(B_{\frac{3\delta}{4}R})}&\leq C\cdot C_{v}\cdot \delta^{\frac{n-p}{p}}, \\
\norm{\sum_{i=1}^{n}b_{i}\cdot \partial_{i}\zeta^{\delta}\cdot v}_{L^{p}(B_{\frac{3\delta}{4}R})}
& \leq C\cdot \norm{v}_{L^{\infty}(B_{\frac{3\delta}{4}R})}\cdot \delta^{\frac{n-p}{p}}. 
\end{split}
\end{equation*}
Using the above estimates, we obtain
\begin{equation}
\begin{split}
&\norm{v}_{W^{2,p}(B_{\frac{\delta}{2}R})}\leq \norm{w}_{W^{2,p}(B_{\frac{3\delta}{4}R})}\leq C\frac{\delta}{4}\norm{v}_{W^{2,p}(B_{\delta R})}\\
& \quad +C\left(\norm{v}_{L^{\infty}(B_{\frac{3\delta}{4}R})}+C_{v}\right)(1+\delta^{\frac{n-p}{p}}+\delta^{\frac{n-2p}{p}})+ C\norm{f}_{L^{p}(B_{\frac{3\delta}{4}R})}.
\end{split}
\end{equation}
Multiplying $\delta^{2}$ on both sides of the previous inequality produces
\begin{equation}
\begin{split}
\delta^{2}\norm{v}_{W^{2,p}(B_{\frac{\delta}{2}R})}\leq C\delta\left(\frac{\delta}{2}\right)^{2}\norm{v}_{W^{2,p}(B_{\delta R})}+K(\delta),
\end{split}
\end{equation}
where $K(\delta):=C\cdot \left(\norm{v}_{L^{\infty}(B_{\frac{3\delta}{4}R})}+C_{v}\right)\cdot (\delta^{2}+\delta^{\frac{n+p}{p}}+\delta^{\frac{n}{p}})+ \norm{f}_{L^{p}(B_{\frac{3\delta}{4}R})}$.
Denote $F(\tau):=\tau^{2}\norm{v}_{W^{2,p}(B_{\frac{\delta}{2}R+(\delta-\tau)})}$.
The previous inequality yields the following recursive inequality
$F(\delta)\leq C\delta \  F\left(\frac{\delta}{2}\right)+K(\delta)$.  Choosing $0<\delta<1$ such that $\delta \leq \frac{1}{2C}$, we obtain 
$F(\delta)\leq \frac{1}{2}F\left(\frac{\delta}{2}\right)+K(\delta)$.  Now iterating the recursive inequality and noting that $K(\delta)$ is an increasing function, we obtain
\begin{equation}
\begin{split}
F(\delta)\leq \sum_{i=0}^{\infty}\frac{1}{2^{i}}K\left(\frac{\delta}{2^{i}}\right)\leq \sum_{i=0}^{\infty}\frac{1}{2^{i}}K(\delta)=2K(\delta).
\end{split}
\end{equation}
Hence,
\begin{equation}
\begin{split}
\norm{v}_{W^{2,p}(B_{\frac{\delta}{2}R})}&\leq 2\left(C\left(\norm{v}_{L^{\infty}(B_{\frac{3\delta}{4}R})}+C_{v}\right)(\delta^{2}+\delta^{\frac{1+p}{p}}+\delta^{\frac{1}{p}})+ \norm{f}_{L^{p}(B_{\frac{3\delta}{4}R})}\right),\\
&\leq C\left(\norm{f}_{L^{p}(B_{\frac{3\delta}{4}R})}+C_{v}+\norm{v}_{L^{\infty}(B_{\frac{3\delta}{4}R})}\right).
\end{split}
\end{equation}
If we cover $\mathcal{O}'$ with a finite number of balls of radius $\frac{\delta}{2}R$, then the estimate of the proposition follows.  

\subsubsection*{Proof of Lemma~\ref{HolderReg}}
Let $C$ denote a generic constant unless specified otherwise.  First, we estimate $\sup_{\overline{\Omega}}\abs{I\varphi}$.  For any $x\in \overline{\Omega}$,
\begin{equation}
\begin{split}
\abs{I\varphi(x)}&\leq \int_{\{j_{0}(z)< 1\}}\abs{\varphi(x+j(x,z))-\varphi(x)-\nabla\varphi(x)\cdot j(x,z)}\nu(\mathrm{d}z)\\
& \quad +\int_{\{j_{0}(z)\geq1\}}\abs{\varphi(x+j(x,z))-\varphi(x)}\nu(\mathrm{d}z)\\
&\leq \int_{\{j_{0}(z)< 1\}}\int_{0}^{1}\abs{\nabla\varphi(x+\theta j(x,z))\cdot j(x,z)-\nabla\varphi(x)\cdot j(x,z)}\mathrm{d}\theta \ \nu(\mathrm{d}z)\\
& \quad +C_{\varphi}\int_{\{j_{0}(z)\geq 1\}}j_{0}(z)\nu(\mathrm{d}z)\\
&\leq \norm{\varphi}_{C^{1,\alpha}(\overline{\Omega^{1}})}\int_{\{j_{0}(z)< 1\}}[j_{0}(z)]^{1+\alpha}\nu(\mathrm{d}z)+C_{\varphi}\int_{\{j_{0}(z)\geq 1\}}j_{0}(z)\nu(\mathrm{d}z)\\
&\leq C_{0}\left(C_{\varphi}+ \norm{\varphi}_{C^{1,\alpha}\left(\overline{\Omega^{1}}\right)}\right).
\end{split}
\end{equation}
Next, we show $I\varphi$ is H\"{o}lder continuous.  Let $x_{1},x_{2}\in \overline{\Omega}$ and set $\delta=\abs{x_{1}-x_{2}}^{\frac{1}{2}}\wedge 1$.  Consider $\abs{I\varphi(x_{1})-I\varphi(x_{2})}\leq I_{1}+I_{2}+I_{3}$ in which
\[
\begin{split}
I_{1}&:=\int_{\{j_{0}(z)\leq \delta\}}(\abs{\varphi(x_{1}+j(x_{1},z))-\varphi(x_{1})-j(x_{1},z)\cdot \nabla\varphi(x_{1})}\\
& \quad  +\abs{\varphi(x_{2}+j(x_{2},z))-\varphi(x_{2})-j(x_{2},z)\cdot \nabla\varphi(x_{2})}) \ \nu(\mathrm{d}z),
\end{split}
\]
\[
\begin{split}
I_{2}&:=\int_{\{\delta<j_{0}(z)< 1\}}(\abs{\varphi(x_{1}+j(x_{1},z))-\varphi(x_{2}+j(x_{2},z))}\\
& \quad  +\abs{\varphi(x_{1})-\varphi(x_{2})}+\abs{j(x_{1},z)\cdot \nabla\varphi(x_{1})-j(x_{2},z)\cdot \nabla\varphi(x_{2})}) \ \nu(\mathrm{d}z),
\end{split}
\]
\[
\begin{split}
I_{3}&:=\int_{\{j_{0}(z)\geq 1\}}(\abs{\varphi(x_{1}+j(x_{1},z))-\varphi(x_{2}+j(x_{2},z))}+\abs{\varphi(x_{1})-\varphi(x_{2})}) \ \nu(\mathrm{d}z).
\end{split}
\]
Estimating $I_{1}$, we have 
\begin{equation}
\begin{split}
I_{1}&=\int_{\{j_{0}(z)\leq \delta\}}\abs{j(x_{1},z)\cdot \nabla \varphi(w_{1,z})-j(x_{1},z)\cdot \nabla\varphi(x_{1})}\\
& \quad \quad +\abs{j(x_{2},z)\cdot \nabla\varphi(w_{2,z})-j(x_{2},z)\cdot \nabla\varphi(x_{2})}\nu(\mathrm{d}z)\\
&\leq \int_{\{j_{0}(z)\leq \delta\}} j_{0}(z)\abs{\nabla\varphi(w_{1,z})-\nabla\varphi(x_{1})}+j_{0}(z)\abs{\nabla\varphi(w_{2,z})-\nabla\varphi(x_{2})}\nu(\mathrm{d}z)\\
&\leq  \norm{\varphi}_{C^{1,\alpha}(\overline{\Omega^{1}})}\left(\int_{\{j_{0}(z)\leq \delta\}}j_{0}(z)\abs{w_{1,z}-x_{1}}^{\alpha}\nu(\mathrm{d}z)+\int_{\{j_{0}(z)\leq \delta\}}j_{0}(z)\abs{w_{2,z}-x_{2}}^{\alpha}\nu(\mathrm{d}z)\right)\\
&\leq 2 \norm{\varphi}_{C^{1,\alpha}(\overline{\Omega^{1}})} \int_{\{j_{0}(z)\leq \delta\}}[j_{0}(z)]^{1+\alpha}\nu(\mathrm{d}z)\\
&\leq 2\norm{\varphi}_{C^{1,\gamma}(\overline{\Omega^{1}})}\delta^{2\alpha-\gamma}\int_{\{j_{0}(z)<1\}}[j_{0}(z)]^{\gamma+1-\alpha}\nu(\mathrm{d}z) \\
&\leq  2C_{0}\norm{\varphi}_{C^{1,\alpha}(\overline{\Omega^{1}})}\abs{x_{1}-x_{2}}^{\frac{2\alpha-\gamma}{2}}, 
\end{split}
\end{equation}
for some $w_{1,z},w_{2,z}$ satisfying $\abs{w_{1,z}-x_{1}}\leq \abs{j(x_{1},z)}$ and $\abs{w_{2,z}-x_{2}}\leq \abs{j(x_{2},z)}$.  Estimating $I_{2}$, we have
\begin{equation}
\begin{split}
I_{2}&\leq \int_{\{\delta < j_{0}(z)< 1\}}C_{\varphi}\abs{x_{2}+j(x_{2},z)-(x_{1}+j(x_{1},z))}+C_{\varphi}\abs{x_{1}-x_{2}}\\
& +\abs{j(x_{1},z)\cdot \nabla\varphi(x_{1})-j(x_{2},z)\cdot \nabla \varphi(x_{2})}\nu(\mathrm{d}z)\\
& \leq \abs{x_{1}-x_{2}}\int_{\{\delta <j_{0}(z)< 1\}}(2C_{\varphi}+C_{\varphi}C_{j}(z))\nu(\mathrm{d}z)\\
& +\int_{\{\delta< j_{0}(z)< 1\}}\abs{j(x_{1},z)\cdot (\nabla\varphi(x_{1})-\nabla\varphi(x_{2}))+j(x_{1},z)\cdot \nabla\varphi(x_{2})-j(x_{2},z)\cdot \nabla\varphi(x_{2})}\nu(\mathrm{d}z)\\
& \leq 2C_{0}C_{\varphi}\abs{x_{1}-x_{2}}\delta^{-\gamma}+C_{\varphi}\int_{\mathbb{R}^{l}}C_{j}(z)\nu(\mathrm{d}z)\abs{x_{1}-x_{2}}\\
& +\norm{\varphi}_{C^{1,\alpha}(\overline{\Omega^{1}})}\int_{\{\delta< j_{0}(z)< 1\}}\abs{x_{1}-x_{2}}^{\alpha}\nu(\mathrm{d}z)+C_{\varphi}\abs{x_{1}-x_{2}}\int_{\mathbb{R}^{l}}C_{j}(z)\nu(\mathrm{d}z)\\
& \leq 2C_{0}C_{\varphi}\abs{x_{1}-x_{2}}\delta^{-\gamma}+C_{\varphi}\int_{\mathbb{R}^{l}}C_{j}(z)\nu(\mathrm{d}z)\abs{x_{1}-x_{2}}+C_{0}\norm{\varphi}_{C^{1,\gamma}(\overline{\Omega^{1}})}\abs{x_{1}-x_{2}}^{\alpha}\delta^{-\gamma}\\
&  +\int_{\mathbb{R}^{l}}C_{j}(z)\nu(\mathrm{d}z)\abs{x_{1}-x_{2}}\\
&\leq C\abs{x_{1}-x_{2}}^{\frac{2\alpha-\gamma}{2}}+C\abs{x_{1}-x_{2}}+C\norm{\varphi}_{C^{1,\gamma}(\overline{\Omega^{1}})}\abs{x_{1}-x_{2}}^{\frac{2\alpha-\gamma}{2}}+C\abs{x_{1}-x_{2}}\\
&\leq C\abs{x_{1}-x_{2}}^{\frac{2\alpha-\gamma}{2}}.
\end{split}
\end{equation}
We briefly remark about the last two inequalities above.  Let $\mathrm{diam}(\Omega):=\max_{x,y\in \Omega}\abs{x-y}$.  If $\delta=\abs{x_{1}-x_{2}}^{\frac{1}{2}}$, we have $\abs{x_{1}-x_{2}}\delta^{-\gamma}= \abs{x_{1}-x_{2}}^{1-\frac{\gamma}{2}}\leq \abs{x_{1}-x_{2}}^{\frac{2\alpha-\gamma}{2}}$ along with $\abs{x_{1}-x_{2}}^{\alpha}\delta^{-\gamma}= \abs{x_{1}-x_{2}}^{\frac{2\alpha-\gamma}{2}}$.  If instead $\delta=1<\abs{x_{1}-x_{2}}^{\frac{1}{2}}$, then we have $\abs{x_{1}-x_{2}}\delta^{-\gamma}\leq C\abs{x_{1}-x_{2}}^{\frac{2\alpha-\gamma}{2}}$ with $C=(\mathrm{diam}(\Omega))^{\frac{2-2\alpha+\gamma}{2}}$ along with $\abs{x_{1}-x_{2}}^{\alpha}\leq C\abs{x_{1}-x_{2}}^{\frac{2\alpha-\gamma}{2}}$ with $C=(\mathrm{diam}(\Omega))^{\frac{\gamma}{2}}$.  Estimating $I_{3}$, we find
\begin{equation}
\begin{split}
I_{3}&\leq \int_{\{j_{0}(z)>1\}}C_{\varphi}\left(\abs{x_{2}-x_{1}+j(x_{2},z)-j(x_{1},z)}+\abs{x_{1}-x_{2}}\right)\nu(\mathrm{d}z)\\
&\leq \abs{x_{1}-x_{2}}\int_{\{j_{0}(z)>1\}}C_{\varphi}\left(2+C_{j}(z)\right)\nu(\mathrm{d}z)\\
&\leq C\abs{x_{1}-x_{2}}^{\frac{2\alpha-\gamma}{2}}, \text{with} \ C=(\mathrm{diam}(\Omega))^{\frac{2-2\alpha+\gamma}{2}}.
\end{split}
\end{equation}
Combining these estimates for $I_{1},I_{2},I_{3}$, we have
\begin{equation}
\begin{split}
&\abs{I\varphi(x_{1})-I\varphi(x_{2})} \leq C\abs{x_{1}-x_{2}}^{\frac{2\alpha-\gamma}{2}},
\end{split}
\end{equation}
for $C$ independent of $x_{1},x_{2}$.

\subsubsection*{Proof of Lemma~\ref{moment-est}}

Set $Y_{t}=X_{t}-X^{\epsilon}_{t}$ and apply It\^{o}'s formula with $\varphi(y,t)=\abs{y}^{2}e^{-\alpha t}$ to obtain
\begin{equation}
\label{ito-moment-estimate}
\begin{split}
\mathrm{d}\varphi(Y_{t},t)=a_{t}\mathrm{d}t+\sum_{k=1}^{d}b^{k}_{t} \ \mathrm{d}W^{k}_{t}+\int_{\mathbb{R}^{l}}c(t,z)\tilde{N}(\mathrm{d}t,\mathrm{d}z),
\end{split}
\end{equation}
where $a_{t}$, $b^{k}_{t}$, and $c(t,z)$ can be obtained from their counterparts in \eqref{eq:dY} by replacing $X^2$ by $X^{\epsilon}$.
Also recall \eqref{eq:phider}. From above, we know    
\begin{equation}
\begin{split}
\varphi(y+\tilde{j}(z,t),t)-\varphi(y,t)-\sum_{i}\tilde{j}_{i}(z,t)\partial_{i}\varphi(y,t)=\abs{\tilde{j}(z,t)}^{2}e^{-\alpha t},
\end{split}
\end{equation}
with $\tilde{j}(z,t):=j(X_{t},z)-j^{\epsilon}(X^{\epsilon}_{t},z)$.  Using the fact that for each $\varepsilon>0$, there exists a $C_{\varepsilon}>0$ such that $(a+b)^{2}\leq (1+\varepsilon)a^{2}+(1+C_{\varepsilon})b^{2}$, we find
\begin{equation}
\begin{split}
&\int_{\mathbb{R}^{l}}\abs{j(X_{t},z)-j^{\epsilon}(X^{\epsilon}_{t},z)}^{2}e^{-\alpha t}\nu(\mathrm{d}z)\\
&\leq (1+\varepsilon)\int_{\mathbb{R}^{l}}\abs{j(X_{t},z)-j(X^{\epsilon}_{t},z)}^{2}e^{-\alpha t}\nu(\mathrm{d}z)\\
&\quad +(1+C_{\varepsilon})\int_{\mathbb{R}^{l}}\abs{j(X^{\epsilon}_{t},z)-j^{\epsilon}(X^{\epsilon}_{t},z)}^{2}e^{-\alpha t}\nu(\mathrm{d}z)\\
&\leq(1+\varepsilon)\beta_{j} \abs{X_{t}-X^{\epsilon}_{t}}^{2}e^{-\alpha t}+(1+C_{\varepsilon})e^{-\alpha t}\norm{j-j^{\epsilon}}_{0,2}^{2}.
\end{split}
\end{equation}
With this estimate, we find
\begin{equation*}
\begin{split}
a_{t}\leq \Big [-\alpha +\beta+\varepsilon \beta_{j}\Big ]\varphi(Y_{t},t)+(1+C_{\varepsilon})e^{-\alpha t}\norm{j-j^{\epsilon}}_{0,2}^{2}.
\end{split}
\end{equation*}
Using this inequality and taking expectations in (\ref{ito-moment-estimate}) yields
\begin{equation}
\label{est-01}
\begin{split}
\mathbb{E}\Bigg [\Big [\alpha-\beta-\varepsilon \beta_{j}\Big ] \int_{0}^{t}\abs{X_{t}-X^{\epsilon}_{t}}^{2}e^{-\alpha t}\mathrm{d}s + \abs{X_{t}-X^{\epsilon}_{t}}^{2}e^{-\alpha t} \Bigg ] \leq \frac{1+C_{\varepsilon}}{\alpha}\norm{j-j^{\epsilon}}_{0,2}^{2}.
\end{split}
\end{equation}
Recall, the following stochastic integral inequalities (see e.g. \cite{M-2008}).  For any $p>0$, there is a constant $C_{p}>0$ (in particular, $C_{1}=3$, $C_{2}=4$) such that 
\begin{equation}
\label{stocheq1}
\begin{split}
\mathbb{E}\Bigg [\underset{0\leq r \leq t} \sup \abs{\int_{0}^{r}f(s)\mathrm{d}W_{s}}^{p}\Bigg ] \leq C_{p} \ \mathbb{E} \Bigg [ \left(\int_{0}^{t}\abs{f(s)}^{2}\mathrm{d}s \right)^{p/2}\Bigg ],   
\end{split}
\end{equation}
and for the stochastic Poisson integral, if $0<p\leq 2$, then 
\begin{equation}
\label{stocheq2}
\begin{split}
\mathbb{E}\Bigg [\underset{0\leq r \leq t} \sup \abs{\int_{\mathbb{R}^{l}\times (0,r)}g(s,z)\tilde{N}(\mathrm{d}z, \mathrm{d}s)}^{p} \Bigg ]  \leq C_{p} \ \mathbb{E} \Bigg [ \left(\int_{0}^{t}\mathrm{d}s\int_{\mathbb{R}^{l}}\abs{g(s,z)}^{2}\nu(\mathrm{d}z)\right)^{p/2}\Bigg ] .
\end{split}
\end{equation}  
Now, coming back to (\ref{ito-moment-estimate}) to take first the supremum and then the expectation, we deduce after using (\ref{stocheq1}), (\ref{stocheq2}) with $p=1$,
\begin{equation}
\label{sup-estimate}
\begin{split}
\mathbb{E}\Bigg [ \underset{0\leq s \leq t}\sup \abs{X_{s}-X^{\epsilon}_{s}}^{2}e^{-\alpha t}\Bigg ] \leq 3 \ \mathbb{E}\Bigg [ \left (\sum_{k}\int_{0}^{t}\abs{b^{k}_{s}}^{2}\mathrm{d}s\right )^{1/2}
+\left ( \int_{0}^{t}\mathrm{d}s \int_{\mathbb{R}^{l}}\abs{c(s,t)}^{2}\nu(\mathrm{d}z)\right )^{1/2}\Bigg ].
\end{split}
\end{equation}
We now estimate the two terms on the right hand side of the above inequality.  First, for some $C$ depending on the Lipschitz constant $C_{\sigma}$ in (H1), we have that $\sum_{k}\abs{b^{k}_{s}}^{2}\leq C \abs{\varphi(Y_{s},s)}^{2}$ by the following inequalities
\begin{equation*}
\begin{split}
\sum_{k}\abs{b^{k}_{s}}^{2}&=\sum_{k}\abs{\sum_{i}(\sigma_{ik}(X_{t})-\sigma_{ik}(X_{t}^{\epsilon}))\frac{2(X_{i}(t)-X^{\epsilon}_{i}(t))}{\abs{X_{t}-X^{\epsilon}_{t}}^{2}}\varphi(Y_{s},s)}^{2}\\
&\leq 4n\frac{\abs{\varphi(Y_{s},s)}^{2}}{\abs{X_{t}-X_{t}^{\epsilon}}^{4}}\sum_{k,i}\left(\sigma_{ik}(X_{t})-\sigma_{ik}(X_{t}^{\epsilon})\right)^{2}\left(X_{i}(t)-X_{i}^{\epsilon}(t)\right)^{2}\\
&\leq 2n\frac{\abs{\varphi(Y_{s},s)}^{2}}{\abs{X_{t}-X_{t}^{\epsilon}}^{4}}\left(\sum_{k,i}\left(\sigma_{ik}(X_{t})-\sigma_{ik}(X_{t}^{\epsilon})\right)^{4}+\sum_{k,i}\left(X_{i}(t)-X_{i}^{\epsilon}(t)\right)^{4}\right)\\
&\leq 2n\frac{\abs{\varphi(Y_{s},s)}^{2}}{\abs{X_{t}-X_{t}^{\epsilon}}^{4}}\left(C_{\sigma}^{4}\abs{X(t)-X^{\epsilon}(t)}^{4}+d\abs{X(t)-X^{\epsilon}(t)}^{4}\right)\\
&\leq 2n(C_{\sigma}^{4}+d)\abs{\varphi(Y_{s},s)}^{2}.
\end{split}
\end{equation*}
Using the above, we now have
\begin{equation*}
\begin{split}
\mathbb{E}\Bigg [ \left(\sum_{k}\int_{0}^{t}\abs{b^{k}_{s}}^{2}\mathrm{d}s\right)^{1/2}\Bigg ] &\leq C \ \mathbb{E}\Bigg [ \left (\underset{0\leq s \leq t}\sup \abs{\varphi(Y_{s},s)}\right )^{1/2}\left ( \int_{0}^{t}\abs{\varphi(Y_{s},s)}\mathrm{d}s\right )^{1/2}\Bigg ].
\end{split}
\end{equation*}
Thus, by means of the inequality $2ab \leq \varepsilon a^{2}+b^{2}/\varepsilon$ and the H\"{o}lder inequality we deduce that 
\begin{equation}
\label{b-est}
\begin{split}
3\mathbb{E}\Bigg [ \left(\sum_{k}\int_{0}^{t}\abs{b^{k}_{s}}^{2}\mathrm{d}s\right )^{1/2}\Bigg ] \leq \frac{1}{3}\mathbb{E}\Bigg[\underset{0\leq s\leq t}\sup \abs{\varphi(Y_{s},s)}\Bigg]+C_{1}\mathbb{E}\Bigg [ \int_{0}^{t}\abs{\varphi(Y_{s},s)}\mathrm{d}s\Bigg ].
\end{split}
\end{equation}
The term corresponding to Poisson integral can be handled using the same technique.  Towards this end, note that
\begin{equation}
\begin{split}
\abs{c(s,z)}^{2}\leq \abs{j(X_{s},z)-j^{\epsilon}(X^{\epsilon}_{s},z)}^{2}\int_{0}^{1}\abs{\nabla\varphi(Y_{s}+\theta(j(X_{s},z)-j^{\epsilon}(X_{s}^{\epsilon},z),s))}^{2}\mathrm{d}\theta.
\end{split}
\end{equation}
Estimating the gradient $\nabla \varphi$ and using $y:=X_{s}-X^{\epsilon}_{s}$ to ease notation, we have
\begin{equation}
\label{grad}
\begin{split}
\abs{\nabla \varphi(y+\theta \tilde{j},s)}^{2}= 4\varphi(y+\theta \tilde{j},s) e^{-\alpha s}=4e^{-2\alpha s}\abs{y+\theta \tilde{j}}^{2} \leq 8e^{-2\alpha s}(\abs{y}^{2}+\abs{\tilde{j}}^{2}).
\end{split}
\end{equation}
Thus, we know $\abs{c(s,z)}^{2}\leq 8e^{-2\alpha s}\abs{\tilde{j}}^{2}\left(\abs{y}^{2}+\abs{\tilde{j}}^{2}\right).$
Now, assuming  $C_{j}(z) \in L^{4}(\mathbb{R}^{l})$, we have for $p=2,4$
\begin{equation*}
\begin{split}
\int_{\mathbb{R}^{l}}\abs{j(X_{s},z)-j^{\epsilon}(X^{\epsilon}_{s},z)}^{p}\nu(\mathrm{d}z)&\leq 2^{p-1}\norm{j-j^{\epsilon}}_{0,p}^{p}+2^{p-1}\int_{\mathbb{R}^{l}}\abs{j^{\epsilon}(X_{s},z)-j^{\epsilon}(X^{\epsilon}_{s},s)}^{p}\nu(\mathrm{d}z)\\
&\leq 2^{p-1}\norm{j-j^{\epsilon}}_{0,p}^{p}+2^{p-1}\abs{X_{s}-X^{\epsilon}_{s}}^{p}\int_{\mathbb{R}^{l}}[C_{j}(z)]^{p}\nu(\mathrm{d}z)\\
&\leq C\norm{j-j^{\epsilon}}_{0,p}^{p}+C\abs{X_{s}-X^{\epsilon}_{s}}^{p}.\\
\end{split}
\end{equation*} 
Using this estimate and the inequality $ab\leq \frac{a^{p}}{p}+\frac{b^{q}}{q}$ for $1/p+1/q=1$, the following holds
\begin{equation}
\begin{split}
\int_{\mathbb{R}^{l}}\abs{c(s,z)}^{2}\nu(\mathrm{d}z) &\leq  \int_{\mathbb{R}^{l}}8e^{-2\alpha s}\abs{\tilde{j}}^{2}\left(\abs{y}^{2}+\abs{\tilde{j}}^{2}\right)\nu(\mathrm{d}z)\\
&\leq 8 e^{-2\alpha s}\abs{y}^{2}\left(C\norm{j-j^{\epsilon}}_{0,2}^{2}+C\abs{y}^{2}\right) + 8 e^{-2\alpha s}\left(C\norm{j-j^{\epsilon}}_{0,4}^{4}+C\abs{y}^{4}\right)\\
&\leq C\abs{\varphi(Y_{s},s)}^{2}+Ce^{-2\alpha s}\left(\norm{j-j^{\epsilon}}_{0,4}^{4}+\norm{j-j^{\epsilon}}_{0,2}^{4}\right)\\
&\leq C\abs{\varphi(Y_{s},s)}^{2} + Ce^{-2\alpha s}\Lambda_{0,2}^{4}(j-j^{\epsilon}).
\end{split}
\end{equation}
Returning back to (\ref{sup-estimate}) and using $(a+b)^{p}\leq a^{p} + b^{p}$ for $0<p<1$, we find
\begin{equation}
\begin{split}
\mathbb{E} \Bigg [ \left (\int_{0}^{t}\mathrm{d}s \int_{\mathbb{R}^{l}}\abs{c(s,t)}^{2}\nu(\mathrm{d}z)\right )^{1/2}\Bigg ] &\leq \mathbb{E} \Bigg [ \left ( \int_{0}^{t} C\abs{\varphi(Y_{s},s)}^{2} + Ce^{-2\alpha s}\Lambda_{0,2}^{4}(j-j^{\epsilon})\mathrm{d}s \right )^{1/2}\Bigg ]\\
&\leq \mathbb{E} \Bigg [ \left(\int_{0}^{t}C\abs{\varphi(Y_{s},s)}^{2} \mathrm{d}s \right)^{1/2} \Bigg ]+ C \Lambda_{0,2}^{2}(j-j^{\epsilon}).
\end{split}
\end{equation}
The first term can be handled in the same manner as the Weiner term above to yield an estimate as in (\ref{b-est}).  Now, combining these two estimates, referring back to (\ref{sup-estimate}), and using (\ref{est-01}), we conclude 
\begin{equation}
\begin{split}
\mathbb{E}\Bigg [\underset{0\leq s \leq t}\sup\abs{X_{s}-X^{\epsilon}_{s}}^{2} e^{-\alpha s}\Bigg ] \leq C\Lambda_{0,2}^{2}(j-j^{\epsilon}).
\end{split}
\end{equation}

\newpage

\bibliographystyle{siam}

\bibliography{biblio}

\def\cprime{$'$}
\begin{thebibliography}{10}

\bibitem{BX-2009}
{\sc Erhan Bayraktar and Hao Xing}, {\em Regularity of the optimal stopping
  problem for jump diffusions}, SIAM J. Control Optim., 50 (2012),
  pp.~1337--1357.

\bibitem{BL-1982-2}
{\sc A.~Bensoussan and J.-L. Lions}, {\em Applications of variational
  inequalities in stochastic control}, vol.~12 of Studies in Mathematics and
  its Applications, North-Holland Publishing Co., Amsterdam, 1982.
\newblock Translated from the French.

\bibitem{BL-1982}
\leavevmode\vrule height 2pt depth -1.6pt width 23pt, {\em Contr\^ole
  impulsionnel et in\'equations quasi variationnelles}, vol.~11 of M\'ethodes
  Math\'ematiques de l'Informatique [Mathematical Methods of Information
  Science], Gauthier-Villars, Paris, 1982.

\bibitem{DGW-2009}
{\sc Mark H.~A. Davis, Xin Guo, and Guoliang Wu}, {\em Impulse control of
  multidimensional jump diffusions}, SIAM Journal on Control and Optimization,
  48 (2010), pp.~5276--5293.

\bibitem{BCI-2008}
{\sc C.~Imbert G.~Barles, E.~Chasseigne}, {\em On the dirichlet problem for
  second-order elliptic integro-differential equations}, Indiana Univ. Math.
  J., 57 (2008), pp.~213--246.

\bibitem{GM-2002}
{\sc Maria~Giovanna Garroni and Jos{\'e}-Luis Menaldi}, {\em Second order
  elliptic integro-differential problems}, vol.~430 of Chapman \& Hall/CRC
  Research Notes in Mathematics, Chapman \& Hall/CRC, Boca Raton, FL, 2002.

\bibitem{GT-2001}
{\sc David Gilbarg and Neil~S. Trudinger}, {\em Elliptic partial differential
  equations of second order}, Classics in Mathematics, Springer-Verlag, Berlin,
  2001.
\newblock Reprint of the 1998 edition.

\bibitem{GL-1984}
{\sc F.~Gimbert and P.-L. Lions}, {\em Existence and regularity results for
  solutions of second-order, elliptic integro-differential operators}, Ricerche
  Mat., 33 (1984), pp.~315--358.

\bibitem{GW-2009}
{\sc Xin Guo and Guoliang Wu}, {\em Smooth fit principle for impulse control of
  multidimensional diffusion processes}, SIAM J. Control Optim., 48 (2009),
  pp.~594--617.

\bibitem{LM-2008}
{\sc Damien Lamberton and Mohammed Mikou}, {\em The critical price for the
  {A}merican put in an exponential {L}\'evy model}, Finance Stoch., 12 (2008),
  pp.~561--581.

\bibitem{L-1982}
{\sc Suzanne Lenhart}, {\em Integro-differential operators associated with
  diffusion processes with jumps}, Appl. Math. Optim., 9 (1982/83),
  pp.~177--191.

\bibitem{M-1980-2}
{\sc Jos{\'e}-Luis Menaldi}, {\em On the optimal impulse control problem for
  degenerate diffusions}, SIAM J. Control Optim., 18 (1980), pp.~722--739.

\bibitem{M-1980-1}
\leavevmode\vrule height 2pt depth -1.6pt width 23pt, {\em On the optimal
  stopping time problem for degenerate diffusions}, SIAM J. Control Optim., 18
  (1980), pp.~697--721.

\bibitem{M-1987}
\leavevmode\vrule height 2pt depth -1.6pt width 23pt, {\em Optimal impulse
  control problems for degenerate diffusions with jumps}, Acta Appl. Math., 8
  (1987), pp.~165--198.

\bibitem{M-2008}
\leavevmode\vrule height 2pt depth -1.6pt width 23pt, {\em Stochastic
  differential equations with jumps}, online publication,  (2008), pp.~1--749.

\bibitem{MR-1999}
{\sc Jos{\'e}-Luis Menaldi and Maurice Robin}, {\em Invariant measure for
  diffusions with jumps}, Appl. Math. Optim., 40 (1999), pp.~105--140.

\bibitem{S-2009}
{\sc Roland~C. Seydel}, {\em Existence and uniqueness of viscosity solutions
  for {QVI} associated with impulse control of jump-diffusions}, Stochastic
  Process. Appl., 119 (2009), pp.~3719--3748.

\bibitem{SV-1972}
{\sc D.~Stroock and S.~R.~S. Varadhan}, {\em On degenerate elliptic-parabolic
  operators of second order and their associated diffusions}, Comm. Pure Appl.
  Math., 25 (1972), pp.~651--713.

\end{thebibliography}
\end{document}